\theoremstyle{plain}
\newtheorem{theorem}{Theorem}[section]
\newtheorem{lemma}[theorem]{Lemma}
\newtheorem{prop}[theorem]{Proposition}
\newtheorem{fact}[theorem]{Fact}
\newtheorem{cor}[theorem]{Corollary}
\newtheorem{obs}[theorem]{Observation}
\newtheorem{question}{Question}
\theoremstyle{definition}
\newtheorem{definition}[theorem]{Definition}
\newtheorem*{ack}{Acknowledgements}
\newtheorem*{claim}{Claim}
\newtheorem{example}[theorem]{Example}
\newtheorem*{remark}{Remark}
\newcommand{\N}{\mathbb{N}}
\newcommand{\upto}{\upharpoonright}
\newcommand{\pcolon}{\colon\!\!\!\subseteq}
\newcommand{\tto}{\rightrightarrows}
\newcommand{\leqLT}{\leq_{LT}}
\newcommand{\geqLT}{\geq_{LT}}
\newcommand{\eqLT}{\equiv_{LT}}
\newcommand{\lLT}{<_{LT}}
\newcommand{\gLT}{>_{LT}}
\newcommand{\leqoLT}{\leq^1_{LT}}
\newcommand{\eqoLT}{\equiv^1_{LT}}
\newcommand{\loLT}{<^1_{LT}}
\newcommand{\dom}{{\rm dom}}
\newcommand{\ep}{\varepsilon}
\newcommand{\fr}{{}^\smallfrown}
\newcommand{\Me}{{\tt Merlin} }
\newcommand{\Mer}{{\tt Merlin}}
\newcommand{\Ar}{{\tt Arthur} }
\newcommand{\Art}{{\tt Arthur}}
\newcommand{\Ni}{{\tt Nimue} }
\newcommand{\Nim}{{\tt Nimue}}
\newcommand{\lrangle}[1]{\langle #1 \rangle}
\newcommand\tboldsymbol[1]{%
\protect\raisebox{0pt}[0pt][0pt]{%
$\underset{\widetilde{}}{\mathbf{#1}}$}\mbox{\hskip 1pt}}
\newcommand{\tpbf}[1]{\tboldsymbol{#1}}
\title
{Lawvere-Tierney topologies for computability theorists}
\author{Takayuki Kihara}
\begin{document}
\maketitle

\begin{abstract}
In this article, we introduce certain kinds of computable reduction games with imperfect information.
One can view such a game as an extension of the notion of Turing reduction, and generalized Weihrauch reduction as well.
Based on the work by Lee and van Oosten, we utilize these games for providing a concrete description of the lattice of the Lawvere-Tierney topologies on the effective topos (equivalently, the subtoposes of the effective topos preordered by geometric inclusion).
As an application, for instance, we show that there exists no minimal Lawvere-Tierney topology which is strictly above the identity topology on the effective topos.
\end{abstract}

\section{Introduction}

\subsection{Summary}\label{sec:summary}

Our goal in this article is to accomplish a detailed analysis of the entire structure of ``intermediate worlds'' between ``the world of computable mathematics'' and ``the world of set-theoretic mathematics.''
In \cite{Hey82}, Hyland discovered the {\em effective topos} ${\bf Eff}$, and proposed it as the {\em world of computable mathematics}.
In topos theory, there is a notion called a {\em Lawvere-Tierney topology} (also known as a local operator or a geometric modality), and any topology $j$ on a topos $\mathcal{E}$ yields a new subtopos $\mathcal{E}_j\hookrightarrow\mathcal{E}$.
The least topology is the identity topology ${\tt Id}$ that does not cause any change to the base topos.
The largest topology is the indiscrete topology that contracts all truth-values to a single value, and the resulting degenerated topos may be thought of as the {\em world of inconsistent mathematics}.
The next largest topology is the double negation $\neg\neg$.
In the effective topos, the new topos ${\bf Eff}_{\neg\neg}$ created from $\neg\neg$ is exactly the {\em world of set-theoretic mathematics}; that is, ${\bf Eff}_{\neg\neg}\simeq{\bf Set}$.
What this suggests is that analyzing the intermediate topologies between ${\tt Id}$ and $\neg\neg$ on the effective topos may correspond to exploring the intermediate worlds between computable mathematics and set-theoretic mathematics.

Under this perspective, a topology on the effective topos is a kind of data that indicate how much non-computability to add to the world.
In other words, a topology plays the same role as an {\em oracle}.
Indeed, Hyland \cite{Hey82} noticed that each Turing degree $\mathbf{d}$ has a corresponding topology $j_\mathbf{d}$ on the effective topos, which yields the {\em world of $\mathbf{d}$-relatively computable mathematics}.
However, this does not mean that we have exhausted all the topologies, and of course, there may be other topologies besides them.
For instance, instead of a subset of $\N$ or a total function on $\N$, one can use a partial function as an oracle.
Not only that, but even a partial multi-valued function can be used as an oracle, and has a corresponding topology on the effective topos as we observe in this article.
As another example, Pitts \cite{Pit85} found an intermediate topology that is not bounded by any Turing degree topology.
This topology has properties that are far from any of the other topologies mentioned above.
Remarkably, Lee-van Oosten \cite{LvO} gave a concrete presentation of all topologies on the effective topos.

The first step of our work in this article is to capture the presentation of Lee-van Oosten \cite{LvO} within the framework of {\em generalized Weihrauch reducibility} \cite{HiJo16}.
However, generalized Weihrauch reducibility (which involves a perfect information game) itself is insufficient to deal with all topologies, so we introduce an imperfect information game that incorporates some sort of nonuniform computation with advices.
Coincidentally, it turns out that our notion is heavily related to another notion called {\em extended Weihrauch reducibility}, which is introduced in Bauer \cite{Bau21}.
By viewing topologies in this way, it is possible, for example, to position the study of the structure of Lawvere-Tierney topologies as an extension of the Weihrauch-style analogue \cite{pauly-handbook} of reverse mathematics
(note, however, that this is by no means an extension of the standard reverse mathematics \cite{SOSOA:Simpson,Die18} at all;  reverse mathematics has more to do with the internal logic, and more finitary aspects).
By bringing the arguments on topologies into pure computability theory in this way, we solve some problems proposed in \cite{Lee,LvO}.

While the notion of Lawvere-Tierney topology is originally studied in an abstract context, we develop our theory in the most intuitive and elementary way possible.
We believe that it is important for the development of a theory to present it in a way that reduces prior knowledge of the theory as much as possible.
For this purpose, 
we maintain appropriate notations and keep the discussion moving forward with concrete ideas.
In Section \ref{sec:generalized-reducibility}, we introduce certain kinds of computable reduction games with imperfect information.
By using these games, we also define a notion of computability-theoretic reduction for certain extended functions.
In Section \ref{sec:LT-topologies-chara}, we see that this reducibility notion characterizes the notion of Lawvere-Tierney topology on the effective topos, based on the idea in Lee-van Oosten \cite{LvO}.
In Section \ref{sec:structure-of-LT-topologies}, by using the characterization, we solve some problems on topologies proposed in \cite{Lee,LvO}.
For instance, we see that there is no world of non-computable mathematics which is closest to computable mathematics.
In Section \ref{sec:other-topologies}, we discuss a few other topologies, which has not been studied in the past.
One corresponds to the world of computable mathematics with error probability $\ep$, and the other to computable mathematics with error density $\ep$.

\subsection{Notations}

In this article, we assume that the reader is familiar with elementary facts about computability theory.
For the basics of computability theory, we refer the reader to \cite{Cooper,OdiBook,RogBook,SoareBook}.
%
We use the following notations on strings:
Let $\N^{<\N}$ be the set of all finite strings.
For finite strings $\sigma,\tau\in\N^{<\N}$, we write $\sigma\preceq\tau$ if $\sigma$ is an initial segment of $\tau$, and write $\sigma\prec\tau$ if $\sigma$ is a proper initial segment of $\tau$.
We also use the same notation even if $\tau$ is an infinite string, i.e., $\tau\in\N^\N$.
For $\sigma\in\N^{<\N}\cup\N^\N$ and $\ell\in\N$, define $\sigma\upto \ell$ as the initial segment of $\sigma$ of length $\ell$.
For finite strings $\sigma,\tau\in\N^{<\N}$, let $\sigma\fr\tau$ be the concatenation of $\sigma$ and $\tau$.
If $\tau$ is a string of length $1$, i.e., $\tau$ is of the form $\langle n\rangle$ for some $n\in\N$, then $\sigma\fr\langle n\rangle$ is abbreviated to  $\sigma\fr n$.
Similarly, $\langle n\rangle\fr\tau$ is abbreviated as $n\fr\tau$.

A tree is a set $T\subseteq\N^{<\N}$ which is downward closed under $\preceq$.
An element of a tree $T$ is often called a node.
The $\preceq$-least node (i.e., the empty string) of $T$ is called the root, and a $\preceq$-maximal node of $T$ is called a leaf.
We always assume that $\N^\N$ is equipped with the standard Baire topology, that is, the countable product of the discrete topology on $\N$.
For $\sigma\in\N^{<\N}$, let $[\sigma]$ be the clopen set generated by $\sigma$, i.e., $[\sigma]=\{x\in\N^\N:\sigma\prec x\}$.
For $e\in\N$, let $\varphi_e$ be the $e$th partial computable function on $\N$, and $\varphi_e^\alpha$ be the $e$th partial computable function relative to an oracle $\alpha$.
For a partial function $\varphi$, we write $\varphi(n)\downarrow$ if $\varphi(n)$ is defined, and $\varphi(n)\uparrow$ if $\varphi(n)$ is undefined.

As usual, for $n\in\N$, we often use $n$ to denote $\{0,1,\dots,n-1\}$.
We denote a partial function from $X$ to $Y$ as $f\pcolon X\to Y$.
We use the symbol $\mathcal{P}(Y)$ to denote the power set of a set $Y$.
In this article, a partial function $f\pcolon X\to\mathcal{P}(Y)$ is often called a {\em partial multi-valued function} (abbreviated as a {\em multifunction}), and written as $f\pcolon X\tto Y$.
In computable mathematics, we often view a $\forall\exists$-formula $S$ as a partial multifunction.
Informally speaking, a (possibly false) statement $S\equiv\forall x\in X\;[Q(x)\rightarrow\exists yP(x,y)]$ is transformed into a partial multifunction $f_S\pcolon X\rightrightarrows Y$ such that ${\dom}(f_S)=\{x\in X:Q(x)\}$ and $f_S(x)=\{y\in Y:P(x,y)\}$.
Here, we consider formulas as partial multifunctions rather than relations in order to distinguish a {\em hardest} instance $f_S(x)=\emptyset$ (corresponding to a {\em false sentence}) and an {\em easiest} instance $x\in X\setminus{\dom}(f_S)$ (corresponding to a {\em vacuous truth}).
In this sense, a relation does not correspond to a partial multifunction, but to a total multifunction which may take empty value.

\section{Generalized Turing reducibility}\label{sec:generalized-reducibility}

\subsection{Perfect information game}\label{sec:perfect-information-game}
The notion of relative computation (or Turing reducibility) has been first introduced by Turing in 1939.
From that time to the present, its structure has been investigated to an extremely deep level.
As a result, a vast amount of research results are known (see e.g.~\cite{DoHiBook,Handbook,SoareBook} for the tip of the iceberg).
Traditionally, Turing reducibility is usually considered for sets $A\subseteq\N$ or total functions $f\colon\N\to\N$.
However, a slight extension of this, the notion of Turing reducibility for partial functions $f\pcolon\N\to\N$, has also been considered, and the induced structure is known to be isomorphic to the enumeration degrees; see \cite[Section 11.3]{Cooper}.
The concept of relative computability can be extended to even larger classes of functions.
As one such class, we first deal with partial multi-valued functions (abbreviated as multifunctions) on $\N$.
In recent years, the notion of partial multifunction has received a great deal of attention in computability theory and related fields; see e.g.~\cite{pauly-handbook}. 

\medskip

\noindent
{\em Relative Computation Model:}
Let us introduce the notion of computation relative to a partial multifunction on $\N$.
Our computation model is the same as that of an ordinary programming language, except that a program ${\tt P}$ can contain a special instruction of the form ${\tt b:=\verb|[?]|(a)}$.
The computation model accepts a number $n$ and a partial multifunction $f$ as inputs.
The instruction ${\tt b:=\verb|[?]|(a)}$ assigns one of the values of $f({\tt a})$ to the variable ${\tt b}$.
So far, it is exactly the same as an oracle Turing machine.
However, if $f({\tt a})$ is undefined, the computation will never terminate.
Moreover, if $f$ is multi-valued, i.e., if there are more than one possible values for the output of $f({\tt a})$, this generally produces a nondeterministic computation.

\medskip

We write ${\tt P}^f$ for the partial multifunction defined by the above relative computation.
To be precise, for an input $n$, if the program ${\tt P}$ terminates along any path of nondeterministic computation, we declare that $n$ is contained in the domain of ${\tt P}^f$, i.e., ${\tt P}^f(n)\downarrow$.
Furthermore, if the program ${\tt P}$ along some path of nondeterministic computation returns $m$, then we declare $m\in{\tt P}^f(n)$.

\begin{definition}
We say that {\em $g$ is Turing reducible to $f$} (written $g\leq_Tf$) if there exists a program ${\tt P}$ such that ${\tt P}^f$ refines $g$.
Here, for partial multifunctions $g$ and $h$, we say that $h$ refines $g$ if, for any $n$, $n\in{\rm dom}(g)$ implies $n\in{\rm dom}(h)$ and $h(n)\subseteq g(n)$.
\end{definition}

This notion coincides with ordinary Turing reducibility when restricted to total single-valued functions.
One may think that this programming definition is too vague, so we give a mathematically rigorous description of this.
Formally, the process of Turing reduction for partial multifunctions can also be described as a perfect information two-player game.
However, since the players' abilities are asymmetric, we will describe it as a game between \Me and \Art.

\begin{definition}[Perfect information game]
For partial multifunctions $f,g\pcolon\N\tto\N$, let us consider the following perfect information two-player game $\mathfrak{G}(f,g)$:
\[
\begin{array}{rccccccc}
\Me\colon	& x_0	&		& x_1	&		& x_2	&	& \dots \\
\Ar\colon	&		& y_0	&		& y_1	& 		& y_2	& \dots
\end{array}
\]

\noindent
{\it Game rules:}
Each player chooses a natural number at each round.
Here, \Me and \Ar need to obey the following rules.
\begin{itemize}
\item First, \Me chooses $x_0\in{\dom}(f)$.
\item At the $n$th round, \Ar reacts with $y_n=\langle j,u_n\rangle$.
\begin{itemize}
\item The choice $j=0$ indicates that \Ar makes a new query $u_n$ to $g$.
In this case, we require $u_n\in{\dom}(g)$.
\item The choice $j=1$ indicates that \Ar declares termination of the game with $u_n$.
\end{itemize}
\item At the $(n+1)$th round, \Me responds to the query made by \Ar at the previous stage.
This means that $x_{n+1}\in g(u_n)$.
\end{itemize}

Then, {\em \Ar wins the game $\mathfrak{G}(f,g)$} if either \Me violates the rule before \Ar violates the rule or \Ar obeys the rule and declares termination with $u_n\in f(x_0)$.

\medskip

\noindent
{\em Strategies:}
Hereafter, we require that \Art's moves are chosen in a computable manner.
In other words, \Art's strategy is a code $\tau$ of a partial {\em computable} function $h_\tau\pcolon\N ^{<\N }\to\N $.
On the other hand, \Mer's strategy is any partial function $\sigma\pcolon\N ^{<\N }\to\N $ (which is not necessarily computable).
\Art's strategy $\tau$ is {\em winning} if, as long as \Ar follows the strategy $\tau$, \Ar wins the game, no matter what \Mer's strategy $\sigma$ is.
\end{definition}

\begin{obs}
Let $f$ and $g$ be partial multifunctions.
Then, $f$ is Turing reducible to $g$ if and only if \Ar has a computable winning strategy for $\mathfrak{G}(f,g)$.
\end{obs}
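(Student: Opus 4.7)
The plan is to establish a direct dictionary between programs ${\tt P}$ in the relative computation model and computable strategies $\tau$ for \Art, under which the condition that ${\tt P}^g$ refines $f$ corresponds exactly to $\tau$ being a winning strategy in $\mathfrak{G}(f,g)$. Under this dictionary, an invocation of the instruction ${\tt b:=[?](a)}$ corresponds to \Art's move $\langle 0, a\rangle$; the value nondeterministically assigned to ${\tt b}$ corresponds to \Mer's response $x_{n+1}$; the program's input corresponds to \Mer's first move $x_0$; and the program's output corresponds to \Art's termination move $\langle 1, m\rangle$.

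For the forward direction, I take a program ${\tt P}$ witnessing that ${\tt P}^g$ refines $f$, and define a computable strategy $\tau_{\tt P}$ for \Art that, after receiving \Mer's opening $x_0$, simulates ${\tt P}$ on input $x_0$: each encountered ${\tt b:=[?](a)}$ is translated into the move $\langle 0, a\rangle$, with \Mer's reply taken as the value assigned to ${\tt b}$; upon the simulated ${\tt P}$ halting with output $m$, \Art plays $\langle 1, m\rangle$. Whenever \Mer plays in accordance with the rules, the simulated computation is a genuine nondeterministic path of ${\tt P}^g(x_0)$. Since $x_0 \in {\dom}(f) \subseteq {\dom}({\tt P}^g)$, every such path halts and returns some $m \in {\tt P}^g(x_0) \subseteq f(x_0)$. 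Moreover, along any such path each queried $a$ must lie in ${\dom}(g)$, for otherwise the path would diverge, contradicting ${\tt P}^g(x_0)\downarrow$. Hence \Art obeys the rules and wins.

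For the backward direction, I take a computable winning strategy $\tau$ and build a program ${\tt P}$ which, on input $n$, maintains an internal history $\sigma$ initialized to $\langle n\rangle$, then repeatedly computes $\tau(\sigma) = \langle j, u\rangle$, halts and returns $u$ when $j = 1$, and otherwise executes ${\tt b:=[?](u)}$, appends ${\tt b}$ to $\sigma$, and loops. For $n \in {\dom}(f)$, each nondeterministic execution path of ${\tt P}^g(n)$ corresponds to a legitimate \Mer-play with $x_0 = n$ and each $x_{i+1} \in g(u_i)$; and the dual constraint $u_i \in {\dom}(g)$ must be satisfied by $\tau$ because winning requires \Art to obey the rules. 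Since $\tau$ wins against every such play, \Art must eventually reach a termination move returning $u \in f(n)$, so the program halts on every path with output in $f(n)$, and thus ${\tt P}^g$ refines $f$.

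The step requiring the most care is aligning \Mer's legitimate plays with the nondeterministic execution paths of ${\tt P}^g$: the nondeterminism introduced by ${\tt b:=[?](u)}$ ranges precisely over the values in $g(u)$, which is exactly the constraint $x_{i+1} \in g(u_i)$ on \Mer's responses, and the possibility of divergence of the program along some path must be matched with failure of \Art's strategy to terminate on the corresponding play. Once this correspondence is spelled out, the observation reduces to a direct translation between the two formalisms, with no genuinely new computability-theoretic ingredient required.
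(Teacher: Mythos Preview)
Your proof is correct. The paper states this observation without proof, treating it as a routine unpacking of the two definitions; your argument supplies exactly the natural dictionary the paper leaves implicit, translating oracle calls to query moves and nondeterministic branches to \Mer's legal responses, and it handles the side conditions (queries landing in ${\dom}(g)$, termination along every path) properly.
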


\begin{remark}
A similar notion for partial multifunctions on $\N ^\N $ has been extensively studied, e.g.~in \cite{HiJo16,NePa18,Goh19,westrick2020note,Kih20,DHR20}, and is known as {\em generalized Weihrauch reducibility}.
Indeed, Turing reducibility in the above sense is exactly the restriction of generalized Weihrauch reducibility to functions on $\N $.
\end{remark}

\Art's winning strategy $\tau$ is a {\em one-query strategy} if, for any play following $\tau$, either \Me violates the rule or \Art's second move $y_1$ is of the form $\langle 1,u\rangle$, i.e., $\Art$ declares termination at the second round.

\begin{definition}
Let $f$ and $g$ be partial multifunctions.
We say that {\em $f$ is one-query Turing reducible to $g$} (written $f\leqoLT g$) if there exists \Art's one-query winning strategy $\tau$ for $\mathfrak{G}(f,g)$.
\end{definition}

Equivalently, $f$ is a one-query Turing reducible to $g$ if and only if there exist computable functions $H$ and $K$ such that for any $n$ and $m$,
\[m\in g(H(n))\implies K(n,m)\in f(n).\]

Such an $H$ is called an {\em inner reduction}, and $K$ is called an {\em outer reduction}.

\begin{remark}
A similar notion for partial multifunctions on $\N ^\N $ has been extensively studied under the name {\em Weihrauch reducibility}; see e.g.~Brattka-Gherardi-Pauly \cite{pauly-handbook}.
Indeed, one-query Turing reducibility is exactly the restriction of Weihrauch reducibility to functions on $\N $.
This reducibility is also called many-one reducibility in \cite{Pau17}.
\end{remark}

\begin{remark}
As is well known, it is very difficult to find a natural computably enumerable (c.e.)~set whose Turing degree lies strictly between computable ones and the halting problem; see e.g.~\cite{Mon19}.
As one way to solve this problem of the lack of natural intermediate c.e.~degrees, Simpson \cite{Sim07} proposed to study the Muchnik degrees of $\Pi^0_1$ subsets of Cantor space.
Here, however, we present an alternative solution, which is to consider the Turing degrees of multifunctions on $\N$.
Observe that the Turing degree of a c.e.~set $A\subseteq\N$ is determined by its enumeration time function $\eta_A$, where $\eta_A(n)$ is the stage when $n$ is enumerated into $A$ if such a stage exists; otherwise $\eta_A(n)=0$.
One can easily see that the graph of $\eta_A$ is always co-c.e., i.e., $\Pi^0_1$.
In this light, one can consider that the counterpart of the Turing degrees of c.e.~sets in the multi-valued context is the Turing degrees of multifunctions with $\Pi^0_1$ graphs.
In Example \ref{exa:LLPO}, we give a natural intermediate $\Pi^0_1$ degree between computable problems and the halting problem.

To point out the relevance of the $\Pi^0_1$ multifunctions on $\N$ to the $\Pi^0_1$ subsets of $\N^\N$, note that if $f\colon\N\tto\N$ is a $\Pi^0_1$ multifunction, then the product $\prod_{n\in\N}f(n)$ forms a $\Pi^0_1$ subset of $\N^\N$.
However, be careful about that Turing reducibility for multifunctions is entirely different from Muchnik reducibility for their product sets.
\end{remark}

\begin{example}[Intermediate Turing degree]\label{exa:LLPO}
The following is the $\N$-version of a well-studied principle, called the {\em lesser limited principle of omniscience}.
\begin{align*}
{\rm dom}({\tt LLPO})&=\{e\in\N:|\{j<2:\varphi_e(j)\downarrow\}|\leq 1\},\\
{\tt LLPO}(e)&=\{0,1\}\setminus \{j<2:\varphi_e(j)\downarrow\}.
\end{align*}

There are a huge number of mathematical principles which are equivalent to ${\tt LLPO}$; see Diener \cite{Die18} and also Brattka-Gherardi-Pauly \cite{pauly-handbook}.
The principle ${\tt LLPO}$ may also be called {\em de Morgan's law for $\Sigma^0_1$ formulas}.
In the realizability context, this is closely related to Lifschitz realizability \cite{Lif,vOBook}.
It is not hard to see that the Turing degree of ${\tt LLPO}$ strictly lies between the computable problems and the halting problem.
This also follows from our results in later sections (see Propositions \ref{prop:llpo-vs-error-pos} and \ref{prop:llpo-error}).

Note that ${\tt LLPO}$ is one-query Turing equivalent to a multifunction with a $\Pi^0_1$ graph.
Given $e\in\N$, define $\psi_e$ as follows:
\begin{align*}
\psi_e(0)\downarrow&\iff(\exists s\in\N)\;[\varphi_e(0)[s]\downarrow\;\land\;(\forall t<s)\;\varphi_e(1)[t]\uparrow],\\
\psi_e(1)\downarrow&\iff(\exists s\in\N)\;[\varphi_e(1)[s]\downarrow\;\land\;\varphi_e(0)[s]\uparrow],
\end{align*}
where $\varphi_e(j)[s]$ is the stage $s$ approximation of $\varphi_e(j)$.
Note that it is not possible for both $\psi_e(0)$ and $\psi_e(1)$ to terminate; that is, we always have $|\{j<2:\psi_e(j)\downarrow\}|\leq 1$.
Then we define ${\tt L}(e)=\{0,1\}\setminus\{j<2:\psi_e(j)\downarrow\}$.
Obviously, the graph of the multifunction ${\tt L}\colon\N\tto\N$ is $\Pi^0_1$, and ${\tt L}\eqoLT{\tt LLPO}$.
\end{example}

\subsection{Imperfect information game}\label{sec:imperfect-information-game}

There are various forms of computation, one of which is the notion of probabilistic computation.
As a simple example, let us consider the situation where a program ${\tt P}$ is given an oracle $\alpha$ at random, and for an input $n$, the oracle computation ${\tt P}^\alpha(n)$ halts with probability at least $1-\ep$.
In other words, this is the situation where
\[\mu(A)\geq 1-\ep\;\land\;(\forall \alpha\in A)\;{\tt P}^\alpha(n)\downarrow,\]
for some set $A\subseteq 2^\N$.
Here, $\mu$ is the uniform probability measure on $2^\N$ (i.e., the probability measure by infinite fair coin flips).
This probabilistic computation yields a multifunction such that the value ${\tt P}^\alpha(n)$ for each $\alpha\in A$ is a possible output.
This computation has two parameters, $n$ and $A$.
Of course, $n$ is an input given by us, while $A$ is a witness that the computation halts except for probability at most $\ep$.
It is only guaranteed that such an $A$ exists mathematically, but the computer does not know what exactly $A$ is.

Let us write ${\tt ProbError}_\ep{\tt P}$ for the procedure of giving an oracle randomly to the program ${\tt P}$ and having it perform a computation with error probability at most $\ep$.
If one wants to make explicit a parameter $A$ which witnesses that the computation succeeds with error probability at most $\ep$ for an input $n$, we use the following notation:
\[{\tt ProbError}_\ep{\tt P}(n\mid A)\]

A pair $(n\mid A)$ of parameters is properly accepted only if $A$ witness that ${\tt P}^\alpha(n)$ halts except for probability at most $\ep$:
\[
{\tt ProbError}_\ep{\tt P}(n\mid A)\downarrow\iff A\subseteq 2^\N\;\land\;\mu(A)\geq 1-\ep\;\land\;(\forall \alpha\in A)\;{\tt P}^\alpha(n)\downarrow.
\]

Then, the value ${\tt P}^\alpha(n)$ for each $\alpha\in A$ is a possible output:
\[
y\in{\tt ProbError}_\ep{\tt P}(n\mid A)\iff \exists \alpha\in A\;[{\tt P}^\alpha(n)=y].
\]

Although the roles of $n$ and $A$ are entirely different, if we just treat them formally, the above process can be regarded as a partial multifunction
\[{\tt ProbError}_\ep{\tt P}\pcolon\N\times\mathcal{P}(2^\N)\tto\N.\]

In this sense, both $n$ and $A$ can be thought of as inputs for the above multifunction, but $n$ is an input that is disclosed during the computation, while $A$ is an unknown input that cannot be accessed during the computation.
Hence, we call $n$ a public input, and $A$ a secret input.


\begin{definition}\label{def:LT-problem}
A partial multifunction $g\pcolon\N\times\Lambda\tto\N$, where $\Lambda$ is a set, is called a {\em bilayer function} in this article (any suggestions for a better name for this notion would be welcome).
In this context, a pair $(n,c)\in\N\times\Lambda$ is always written as $(n\mid c)$.
For $(n\mid c)\in{\rm dom}(g)$, we call $n$ a {\em public input} and $c$ a {\em secret input}.
Then, the public domain ${\rm dom}_{\rm pub}(g)$ of $g$ is defined as the set of all $n\in\N$ such that $(n\mid c)\in{\rm dom}(g)$ for some $c\in\Lambda$.
\end{definition}

\begin{example}\label{exa:pmv-is-bilayer}
Any partial multifunction $g\pcolon\N\tto\N$ can be identified with the following bilayer function $\hat{g}\pcolon\N\times\{\ast\}\tto \N$:
\[\hat{g}(n\mid \ast)=g(n).\]
\end{example}

\begin{remark}
If one wants to avoid dealing with an arbitrary set $\Lambda$, one can just consider $G(n)=\{g(n\mid c):c\in\Lambda\mbox{ and }(n\mid c)\in{\rm dom}(g)\}$, which yields $G\pcolon\N\to\mathcal{P}\mathcal{P}(\N)$.
Conversely, if a partial function of the form $G\pcolon\N\to\mathcal{P}\mathcal{P}(\N)$ is given, one can always assume that the elements of $G(n)$ are indexed as $G(n)=\{p^n_c:c\in\Lambda_n\}$.
Then, we consider $g(n\mid c)$ to mean $p^n_c$, which yields a partial multifunction $g\pcolon \N\times\Lambda\tto\N$.

Indeed, previous studies, such as Lee-van Oosten \cite{LvO}, rather deal only with $\mathcal{P}\mathcal{P}(\N)$-valued functions.
In the terms of Bauer \cite{Bau21}, a $\mathcal{P}\mathcal{P}(\N)$-valued function is called an {\em extended Weihrauch degree}, and a partial multifunction as seen as an extended Weihrauch degree is called a {\em modest extended Weihrauch degree}.
However, from the point of view of advised computation, there are advantages to the way of looking at it as in Definition \ref{def:LT-problem}.
\end{remark}

Let us consider relative computation with a bilayer function oracle.
In our computation model, a secret input for an oracle acts like an advice string in computational complexity theory.
For the role of advice in computability theory, we refer the reader to Brattka-Pauly \cite{BrPa10} and Ziegler \cite{Zie12}.
One-query computation with advice in the context of $\N^\N$-computation has also been discussed there.

\begin{example}\label{exa:adviceN}
In the context of $\N^\N$-computation, the bilayer function ${\tt Advice}_\N\colon\{\ast\}\times\N\to\N$ defined by ${\tt Advice}_\N(\ast\mid n)=n$ can be used to deal with {\em nonuniform computability} \cite{BrPa10,Zie12}.
However, in the context of $\N$-computation, ${\tt Advice}_\N$ is too strong and produce the $\neg\neg$-topology on the effective topos \cite{LvO}.
Several variants of random advice in the context of $\N^\N$-computation have also been studied in \cite{BrPa10,BGH15}.
\end{example}

\noindent
{\em Relative Computation Model:}
Our computation model deals not only with one-query relative computation, but also with many-query relative computation.
During a computation with a bilayer function oracle $f$, when the program makes a query $n$ to $f$, the advisor chooses a parameter $c$.
However, the information of $c$ chosen by the advisor is not given to the machine, but only the information of one of the possible values of $f(n\mid c)$ is given.
If this process computes a partial multifunction $g$ when the advisor secretly makes the best choice, then we declare that {\em $g$ is Turing reducible to $f$ in the bilayered context}, and write $g\leqLT f$.

\medskip

Again, one may think that this programming definition is too vague, so we give a mathematically rigorous description of this.
Formally, this procedure can be understood by describing it as an imperfect information game between three players, \Mer, \Art, and \Nim.
The player \Me makes a public input $x_0$ and a secret input $c_0$ on his first move.
Here, among the moves of \Mer, only the secret input $c_0$ is invisible to \Art.
All of \Nim's moves are visible to \Mer, but not to \Art, a mere human being.
The players \Me and \Nim, who are not mere humans, can see all the previous moves at each round.

\begin{definition}[Imperfect information game]
For bilayer functions $f$ and $g$, let us consider the following imperfect information three-player game $\mathfrak{G}(f,g)$:
\[
\begin{array}{rcccccccccc}
\Me\colon	& (x_0\mid c_0)	&		&		& x_1	&		&		& x_2	&		& 		& \dots \\
\Ar\colon	&				& y_0	&		&		& y_1	& 		&		& y_2	& 		& \dots \\
\Ni\colon		&				&		& z_0	&		&		& z_1	& 		&		& z_2	& \dots
\end{array}
\]


\noindent
{\it Game rules:}
Here, the players need to obey the following rules.
\begin{itemize}
\item First, \Me chooses a pair $(x_0\mid c_0)\in{\rm dom}(f)$.
\item At the $n$th round, \Ar reacts with $y_n=\langle j,u_n\rangle$.
\begin{itemize}
\item The choice $j=0$ indicates that \Ar makes a new query $u_n$ to $g$.
In this case, we require $u_n\in{\dom}_{\rm pub}(g)$.
\item The choice $j=1$ indicates that \Ar declares termination of the game with $u_n$.
\end{itemize}
\item At the $n$th round, \Ni makes an advice parameter $z_n$ such that $(u_n\mid z_n)\in{\rm dom}(g)$.
\item At the $(n+1)$th round, \Me responds to the query made by \Ar and \Ni at the previous stage.
This means that $x_{n+1}\in g(u_n\mid z_n)$.
\end{itemize}

Then, {\em \Ar and \Ni win the game $\mathfrak{G}(f,g)$} if either \Me violates the rule before \Ar or \Ni violates the rule, or both \Ar and \Ni obey the rule and \Ar declares termination with $u_n\in f(x_0\mid c_0)$.

\medskip

\noindent
{\it Strategies:}
As noted above, \Ar can only read the moves $x_0,x_1,x_2,\dots$, and the other players can see all the moves.
Moreover, we require that \Art's moves are chosen in a computable manner.
In other words, \Art's strategy is a code $\tau$ of a partial {\em computable} function $h_\tau\pcolon\N ^{<\N }\to\N$, which reads \Mer's moves $x_0,\dots,x_n$ and then returns $y_n$.
On the other hand, \Me and \Nim's strategies are any partial functions (which are not necessarily computable).

A pair $(\tau\mid \eta)$ of \Art's computable strategy $\tau$ and \Nim's strategy $\eta$ is called an \Art-\Ni strategy.
An \Art-\Ni strategy $(\tau\mid\eta)$ is {\em winning} if, as long as \Ar and \Ni follow the strategy $(\tau\mid\eta)$, \Ar and \Ni win the game, no matter what \Mer's strategy $\sigma$ is.
\end{definition}

We now introduce a generalization of Turing reducibility for bilayer functions.

\begin{definition}\label{def:LT-reducibility}
Let $f$ and $g$ be bilayer functions.
We say that {\em $f$ is bilayered Turing reducible (or LT-reducible) to $g$} (written $f\leqLT g$) if there exists a winning \Art-\Ni strategy for $\mathfrak{G}(f,g)$.
\end{definition}

Obviously, bilayered Turing reducibility for partial multifunctions (which can be viewed as bilayer functions as in Example \ref{exa:pmv-is-bilayer}) is the same as Turing reducibility.

\begin{remark}
The notion of an \Art-\Ni strategy is strongly related to the notion of a {\em dedicated sight} in Lee-van Oosten \cite[Definition 4.3]{LvO}.
The statement that $S$ is a $(z,\theta,p)$-dedicated sight roughly corresponds to that $(z\mid S)$ is a winning \Art-\Ni strategy witnessing $\dot{p}\leqLT \theta$, where $\dot{p}(\ast\mid\ast)=p$ for $p\subseteq\N$.
\end{remark}

Before examining bilayered Turing reducibility, we again consider one-query reductions.
A winning \Art-\Ni strategy $(\tau\mid\eta)$ is a {\em one-query strategy} if, for any play following $(\tau\mid\eta)$, either \Me violates the rule or \Art's second move $y_1$ is of the form $\langle 1,u\rangle$, i.e., $\Art$ declares termination at the second round.

\begin{definition}
Let $f$ and $g$ be bilayer functions.
We say that {\em $f$ is one-query bilayered Turing reducible to $g$} (written $f\leqoLT g$) if there exists a one-query winning \Art-\Ni strategy $(\tau\mid\eta)$ for $\mathfrak{G}(f,g)$.
\end{definition}

Equivalently, $f$ is a one-query bilayered Turing reducible to $g$ if and only if there exist computable functions $H$ and $K$ and a function $L$ such that for any $(n\mid c)$ and $m$,
\[m\in g(H(n)\mid L(n,c))\implies K(n,m)\in f(n\mid c).\]

Such an $H$ is called an {\em inner reduction}, and $K$ is called an {\em outer reduction}.
We also call $L$ a {\em secret inner reduction}.

\begin{remark}
One-query bilayered Turing reducibility for bilayer functions (extended Weihrauch degrees) is simply called {\em Weihrauch reducibility} in Bauer \cite{Bau21}.
The algebraic structure of the one-query bilayered Turing degrees (the extended Weihrauch degrees) has been studied there.
\end{remark}

\begin{prop}\label{prop:oqTuring-is-preorder}
$\leqoLT$ is a preorder.
\end{prop}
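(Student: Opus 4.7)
The plan is to work with the equivalent characterization of $\leqoLT$ stated just before the proposition, namely that $f\leqoLT g$ iff there exist computable $H,K$ and an arbitrary function $L$ (inner, outer, and secret inner reductions) such that
\[
m\in g(H(n)\mid L(n,c))\;\Longrightarrow\; K(n,m)\in f(n\mid c)
\]
for every $(n\mid c)\in{\rm dom}(f)$ and every $m$. This bypasses the need to reason about game trees and reduces the claim to straightforward manipulation of three functions.

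For \emph{reflexivity}, I would simply take $H(n)=n$, $K(n,m)=m$, and $L(n,c)=c$, all computable, and note that the defining implication collapses to a tautology. Hence $f\leqoLT f$ for every bilayer function $f$.

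For \emph{transitivity}, suppose $f\leqoLT g$ is witnessed by $(H_1,K_1,L_1)$ and $g\leqoLT h$ by $(H_2,K_2,L_2)$. The idea is to chain the two reductions by feeding the inner reduction of the first into the inner reduction of the second, and similarly for the secret input. Concretely, define
\[
H(n)=H_2(H_1(n)),\qquad L(n,c)=L_2\bigl(H_1(n),\,L_1(n,c)\bigr),\qquad K(n,m)=K_1\bigl(n,\,K_2(H_1(n),m)\bigr).
\]
Clearly $H$ and $K$ are computable as compositions of computable functions, while no computability is demanded of $L$. To verify the reduction, assume $m\in h(H(n)\mid L(n,c))$; applying the second reduction with public input $H_1(n)$ and secret input $L_1(n,c)$ gives $K_2(H_1(n),m)\in g(H_1(n)\mid L_1(n,c))$, and applying the first reduction then gives $K_1(n,K_2(H_1(n),m))=K(n,m)\in f(n\mid c)$, as required.

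I do not anticipate any genuine obstacle here; the only thing to be careful about is that in the composed secret inner reduction the first coordinate must be $H_1(n)$ rather than $n$, since $L_2$ was defined relative to public inputs of $g$, not of $f$. Once the characterization by $(H,K,L)$-triples is in hand, the verification is a one-line substitution.
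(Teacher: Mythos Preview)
Your proof is correct and follows essentially the same approach as the paper: both verify transitivity by composing the $(H,K,L)$-triples in exactly the way you describe, with the composed inner reduction $H_2\circ H_1$, secret inner reduction $(n,c)\mapsto L_2(H_1(n),L_1(n,c))$, and outer reduction $(n,m)\mapsto K_1(n,K_2(H_1(n),m))$. Your explicit treatment of reflexivity via identity maps is what the paper abbreviates as ``trivial.''
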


\begin{proof}
Reflexivity is trivial.
For transitivity, let $\langle H_0,K_0,L_0\rangle$ witness $f\leqoLT g$ and $\langle H_1,K_1,L_1\rangle$ witness $g\leqoLT h$.
Then $m\in h(H_1\circ H_0(n)\mid L_1(H_0(n),\eta_0(n,c)))$ implies $K_1(H_0(n),m)\in g(H_0(n)\mid L_0(n,c))$, which implies $K_0(n,K_1(H_0(n),m))\in f(n\mid c)$.
Hence, $H_1\circ H_0$ is an inner reduction, $(n,m)\mapsto K_0(n,K_1(H_0(n),m))$ is an outer reduction, and $(n,c)\mapsto L_1(H_0(n),L_0(n,c))$ is a secret inner reduction witnessing $f\leqoLT h$.
\end{proof}

\begin{prop}\label{prop:Turing-is-preorder}
$\leqLT$ is a preorder.
\end{prop}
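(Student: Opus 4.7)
Reflexivity is immediate: define an Arthur-Nimue strategy for $\mathfrak{G}(f,f)$ in which, after \Me plays $(x_0\mid c_0)$, \Ar queries $f$ on $u_0=x_0$ and \Ni answers with $z_0=c_0$; then \Me must respond with $x_1\in f(x_0\mid c_0)$, and \Ar declares termination with $u_1=x_1$. Since $x_1\in f(x_0\mid c_0)$, this is a winning one-query strategy.

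For transitivity, suppose $(\tau_0\mid\eta_0)$ is a winning \Art-\Ni strategy for $\mathfrak{G}(f,g)$ and $(\tau_1\mid\eta_1)$ is a winning \Art-\Ni strategy for $\mathfrak{G}(g,h)$. The plan is to build a winning \Art-\Ni strategy $(\tau\mid\eta)$ for $\mathfrak{G}(f,h)$ by running $(\tau_0\mid\eta_0)$ as an outer simulation and, each time it issues a query, resolving that query by running $(\tau_1\mid\eta_1)$ as an inner simulation whose real-world queries become \Art's actual queries to $h$ in $\mathfrak{G}(f,h)$. Concretely, given \Mer's opening move $(x_0\mid c_0)$ in $\mathfrak{G}(f,h)$, feed $(x_0\mid c_0)$ to the simulated opening of $\mathfrak{G}(f,g)$. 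Whenever the simulation of $\tau_0$ produces an outer query $u$ to $g$ (and $\eta_0$ produces a secret $z$), launch a fresh sub-simulation of $\mathfrak{G}(g,h)$ with simulated Merlin-move $(u\mid z)$. Any query $v$ that $\tau_1$ generates in this sub-simulation, together with the secret $w$ provided by $\eta_1$, becomes an actual query/advice pair $(v\mid w)$ that $(\tau\mid\eta)$ plays in $\mathfrak{G}(f,h)$; the real \Mer's answer $y\in h(v\mid w)$ is then fed back into the sub-simulation of $\tau_1$. When $\tau_1$ eventually declares termination with some $r$, pass $r$ back to the outer simulation of $\tau_0$ as the response to the query $u$. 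When $\tau_0$ finally declares termination with some $s$, $(\tau\mid\eta)$ declares termination in $\mathfrak{G}(f,h)$ with the same $s$.

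Correctness follows by induction on the termination order of the simulations, together with the winning property of each component strategy. The key invariant is: if \Me plays legally in the outer game $\mathfrak{G}(f,h)$, then every answer $y$ fed into the sub-simulation of $\tau_1$ satisfies $y\in h(v\mid w)$, so the sub-simulation corresponds to a legal play of $\mathfrak{G}(g,h)$ against Merlin's move $(u\mid z)$; hence the value $r$ returned lies in $g(u\mid z)$, which makes the outer simulation of $\tau_0$ a legal play of $\mathfrak{G}(f,g)$ against $(x_0\mid c_0)$. Thus the final $s$ lies in $f(x_0\mid c_0)$, so $(\tau\mid\eta)$ wins. Computability of $\tau$ is inherited from that of $\tau_0$ and $\tau_1$ because \Ar at each real stage only needs to advance one of the two simulated executions, and in each simulation he sees only the sequence of public responses already disclosed to him.

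The main obstacle is purely bookkeeping: making the recursive interleaving of the two simulations precise enough that one can verify that \Ar's \emph{actual} view in $\mathfrak{G}(f,h)$ suffices to drive both simulations (so that $\tau$ remains a computable strategy depending only on $x_0,x_1,\ldots$), while \Ni, who sees every move, can compute $\eta_0$ and $\eta_1$'s secret outputs from the full transcript. Once this bookkeeping is laid out, winning-ness is a direct two-level induction as above.
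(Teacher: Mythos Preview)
Your proposal is correct and follows essentially the same approach as the paper: an outer simulation of $\mathfrak{G}(f,g)$ in which each query to $g$ is resolved by a nested simulation of $\mathfrak{G}(g,h)$, with the real queries to $h$ being exactly the queries issued by the inner $\tau_1$. Your discussion of the bookkeeping (that \Art's view in $\mathfrak{G}(f,h)$ suffices to drive both simulations, and that \Ni can supply the secrets from the full transcript) makes explicit what the paper leaves implicit, but the argument is otherwise the same.
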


\begin{proof}
Reflexivity is trivial.
For transitivity, we only need to combine the argument in Proposition \ref{prop:oqTuring-is-preorder} and the proof of transitivity of generalized Weihrauch reducibility \cite[Proposition 4.4]{HiJo16}.
We assume that $f\leqLT g$ and $g\leqLT h$.
To avoid confusion, we name $\Ar_0$, $\Me_0$, and $\Ni_0$ for the players in the game $\mathfrak{G}(f,g)$, and $\Ar_1$, $\Me_1$, and $\Ni_1$ for the players in the game $\mathfrak{G}(g,h)$.
Let $(\tau_i\mid\eta_i)$ be a winning $\Ar_i$-$\Ni_i$ strategy for the corresponding game for each $i<2$.
In the following, we assume that $\Ar_i$ and $\Ni_i$ always follow their winning strategies.
We construct a winning \Art-\Ni strategy for $\mathfrak{G}(f,h)$.

Let $(x\mid c)$ be \Mer's first move in the game $\mathfrak{G}(f,h)$.
Then, consider $(x\mid c)$ as $\Mer_0$'s first move in the game $\mathfrak{G}(f,g)$ as well, and simulate a play following the $\Art_0$-$\Ni_0$ strategy $(\tau_0\mid\eta_0)$.
Along such a play, if $\Ar_0$ declares termination with some $u$ at some round, then $\Ar$ also declares termination with the same value $u$.
If $\Ar_0$ and $\Ni_0$ make a query $(u\mid z)$ to $g$ at some round, then think of $(u\mid z)$ as $\Me_1$'s first move in the game $\mathfrak{G}(g,h)$, and simulate a play following the $\Art_1$-$\Ni_1$ strategy $(\tau_1\mid\eta_1)$.
During this subplay, \Ar and \Ni simply copy the moves made by $\Ar_1$ and $\Ni_1$, respectively, and play them as their own moves.
Here, \Me copies $\Me_1$'s moves, except for the first move, and uses them directly in his own moves.
Since $\Ar_1$ and $\Ni_1$ follow their winning strategies, $\Ar_1$ declares termination with some $v$ at some round, and moreover $v\in g(u\mid z)$.
Hence, one can think of such $v$ as $\Me_0$'s response to the previous move $(u\mid z)$ by $\Ar_0$ and $\Ni_0$.
This allows the game $\mathfrak{G}(f,g)$ to move on to the next round (and this can be simulated by \Ar and \Nim, since they know the value of $v$).
Repeating this process, since $\Ar_0$ and $\Ni_0$ follow their winning strategies, $\Ar_0$ declares termination with some $w$ at some round, and moreover $w\in f(x\mid c)$.
Therefore, $\Ar_0$ also declares termination with some $w\in f(x\mid c)$ at some round.
Hence, the \Art-\Ni strategy described above is winning for $\mathfrak{G}(f,h)$, which concludes $f\leqLT h$.
\end{proof}

Note that the rule of the game $\mathfrak{G}(f,g)$ does not mention $f$ except for Player I's first move.
Hence, if we skip Player I's first move, we can judge if a given play follows the rule without specifying $f$.
Such a restricted game is denoted by $\mathfrak{G}(g)$.
 {\em \Ar and \Ni win the game $\mathfrak{G}(g)$} if either \Me violates the rule before \Ar or \Ni violates the rule, or both \Ar and \Ni obey the rule and \Ar declares termination.

\begin{definition}
Given a bilayer function $h$, let us define the new bilayer function $h^\Game$ as follows:
An input for $h^\Game$ is an \Art-\Ni strategy $(\tau\mid\eta)$, where \Art's strategy $\tau$ is a public input, and \Nim's strategy $\eta$ is a secret input.
\begin{itemize}
\item $h^\Game(\tau\mid\eta)$ is defined only if, along any play following the strategy $(\tau\mid\eta)$, \Ar and \Ni win the game $\mathfrak{G}(h)$ whatever \Mer's strategy is.
\item $u\in h^\Game(\tau\mid\eta)$ if and only if there is a play in $\mathfrak{G}(h)$ that follows the strategy $(\tau\mid\eta)$ such that \Ar declares termination with $u$ at some round, where all players obey the rule.
\end{itemize}
\end{definition}

The first condition says that $(\tau\mid\eta)\in h^\Game$ if and only if $(\tau\mid\eta)$ is a winning \Art-\Ni strategy for $\mathfrak{G}(h)$ in a certain sense, and in particular, \Ar declares termination at some round unless \Me violates the rule.
Alternatively, $h^\Game$ can be thought of as a universal machine for $h$-relative computation.

\begin{prop}\label{prop:game-closure-reduction}
For bilayer functions $g$ and $h$,  $g\leqLT h$ if and only if $g\leqoLT h^\Game$.
\end{prop}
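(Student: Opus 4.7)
The plan is to exploit the fact that the game $\mathfrak{G}(h)$ is precisely $\mathfrak{G}(g, h)$ with \Mer's opening move $(x_0 \mid c_0)$ stripped away. Consequently an \Art-\Ni strategy for $\mathfrak{G}(g, h)$ is, by currying, a family of \Art-\Ni strategies for $\mathfrak{G}(h)$ parametrized by the public $x_0$ (for \Art) and by the whole $(x_0 \mid c_0)$ (for \Ni). This correspondence will translate between the two reducibilities.

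For the forward direction, suppose $g \leqLT h$ via a winning \Art-\Ni strategy $(\tau \mid \eta)$. For each $n$, let $\tau^n$ be the partial computable \Art-strategy for $\mathfrak{G}(h)$ that simulates $\tau$ on the assumption that \Mer's first public move was $n$, treating all later \Mer moves as $h$-responses; by the $s$-$m$-$n$ theorem, an index $H(n)$ for $\tau^n$ is computable in $n$. Analogously, define $\eta^{n, c}$ by fixing \Mer's first move to be $(n \mid c)$, and set $L(n, c) = \eta^{n, c}$. The assumption that $(\tau \mid \eta)$ is winning for $\mathfrak{G}(g, h)$ translates exactly into the statement that whenever $(n \mid c) \in \dom(g)$, the strategy $(\tau^n \mid \eta^{n, c})$ is winning for $\mathfrak{G}(h)$, so $(H(n) \mid L(n, c)) \in \dom(h^\Game)$, and every value that \Art could possibly declare at termination lies in $g(n \mid c)$. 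Taking $K(n, u) = u$ then witnesses $g \leqoLT h^\Game$.

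For the backward direction, suppose $g \leqoLT h^\Game$ via computable $H$, $K$ and a function $L$. I build an \Art-\Ni strategy for $\mathfrak{G}(g, h)$ as follows. When \Mer opens with $(x_0 \mid c_0)$, \Art decodes $H(x_0)$ as an index of an \Art-strategy $\tau^*$ for $\mathfrak{G}(h)$ (using only her public information $x_0$), and \Ni reads off the \Ni-strategy $\eta^* = L(x_0, c_0)$ (using the secret $c_0$). From round $0$ onwards the pair simulates $(\tau^* \mid \eta^*)$ inside $\mathfrak{G}(g, h)$, with \Mer's subsequent moves serving as $h$'s responses; when $\tau^*$ declares termination with value $u$, \Art declares termination with $K(x_0, u)$ in the outer game. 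Because $(H(x_0) \mid L(x_0, c_0)) \in \dom(h^\Game)$, the inner simulation is guaranteed to reach some $u \in h^\Game(H(x_0) \mid L(x_0, c_0))$, and then $K(x_0, u) \in g(x_0 \mid c_0)$ by the one-query reduction.

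The main obstacle, and the only nontrivial point, is a careful verification of the visibility constraints: in both translations \Art must depend only on public information (ensured because $\tau^n$ is computed uniformly from $n$ alone and $\tau^*$ is determined by $x_0$ alone), while \Ni may depend on everything (accommodated by letting $\eta^{n,c}$ and $\eta^*$ absorb the secret input through $L$). Together with the routine observation that membership in $\dom(h^\Game)$ and in the image of $h^\Game$ encodes exactly the termination behaviour required for a winning \Art-\Ni strategy in $\mathfrak{G}(g, h)$, this completes the argument.
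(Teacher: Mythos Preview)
Your proof is correct and follows essentially the same approach as the paper: currying the opening move $(x_0\mid c_0)$ to convert between an \Art-\Ni strategy for $\mathfrak{G}(g,h)$ and a family of \Art-\Ni strategies for $\mathfrak{G}(h)$ indexed by $(n\mid c)$, with the outer reduction $K(n,u)=u$ in the forward direction and the application of $K$ at the moment of termination in the backward direction. The paper's argument differs only in presentation, writing out the curried strategies explicitly as $\tau_n(\sigma)=\tau(n{}^\smallfrown\sigma)$ and $\eta_{n,c}(\sigma)=\eta(\langle n,c\rangle{}^\smallfrown\sigma)$, but the content is the same.
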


\begin{proof}
$(\Rightarrow)$
Let $(\tau\mid\eta)$ be a winning \Art-\Ni strategy witnessing $g\leqLT h$.
Given an input $(n\mid c)$ for $g$, define $\tau_n(\sigma)=\tau(n\fr\sigma)$ and $\eta_{n,c}(\sigma)=\eta(\langle n,c\rangle\fr\sigma)$.
Any $u\in h^\Game(\tau_n\mid\eta_{n,c})$ corresponds to a play in $\mathfrak{G}(g,h)$ following the strategy $(\tau\mid\eta)$, where \Mer's first move is $(n\mid c)$, and $\Ar$ declares termination with $u$.
Since $(\tau\mid\eta)$ is winning in $\mathfrak{G}(g,h)$, we must have $u\in g(n\mid c)$.
Thus, $n\mapsto\tau_n$ is an inner reduction, $(n,c)\mapsto\eta_{n,c}$ is a secret inner reduction, and $(n,u)\mapsto u$ is an outer reduction witnessing $g\leqoLT h^\Game$.

$(\Leftarrow)$
Let $\langle H,K,L\rangle$ witness $g\leqoLT h^\Game$.
As $H$ is an inner reduction, note that $H(n)$ is (a code of) \Art's strategy, and think of $H(n)(\sigma)$ as \Art's move after reading \Mer's moves $\sigma$.
Then, define $\tau(n\fr\sigma)=H(n)(\sigma)$ if $H(n)(\sigma)$ does not declare termination, i.e., $H(n)(\sigma)$ is of the form $\langle 0,u\rangle$.
If $H(n)(\sigma)$ declares termination with $u$, then define $\tau(n\fr\sigma)=\langle 1,K(n,u)\rangle$; that is, $\tau(n\fr\sigma)$ declares termination with $K(n,u)$.
Clearly, $\tau$ is computable.
We also define $\eta(\langle n,c\rangle\fr\sigma)=L(n,c)(\sigma)$.

Assume that \Ar and \Ni follow the strategy $(\tau\mid\eta)$ in the game $\mathfrak{G}(g,h)$.
If \Mer's first move is $(n\mid c)$, then, by the definitions of $\tau$ and $\eta$, \Ar and \Ni follow the strategy $(H(n)\mid L(n,c))$ in the subgame $\mathfrak{G}(h)$ until \Ar declares termination.
Since $(H(n)\mid L(n,c))\in{\rm dom}(h^\Game)$, either \Me violates the rule before \Ar or \Ni violates the rule, or all players obey the rule and \Ar declares termination.
Assume that \Me follows a strategy obeying the rule.
Then, in the subgame $\mathfrak{G}(h)$, \Ar declares termination with $u$ at some round, i.e., $u\in h^\Game(H(n)\mid L(n,c))$.
Hence, by the definition of $\tau$, in the game $\mathfrak{G}(g,h)$, \Ar declares termination with $K(n,u)$.
As $\langle H,K,L\rangle$ are reductions witnessing $g\leqoLT h^\Game$, $u\in h^\Game(H(n)\mid L(n,c))$ implies $K(n,u)\in g(n\mid c)$.
This verifies that $(\tau\mid\eta)$ is a winning \Art-\Ni strategy witnessing $g\leqLT h$.
\end{proof}

\begin{prop}\label{prop:game-closure-twice}
$h^{\Game\Game}\eqoLT h^\Game$.
\end{prop}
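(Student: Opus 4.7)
The plan is to derive the equivalence from Proposition~\ref{prop:game-closure-reduction} (the universal property $g\leqLT h \iff g\leqoLT h^\Game$) together with transitivity of $\leqLT$ (Proposition~\ref{prop:Turing-is-preorder}). Morally, what is being claimed is that $(\cdot)^\Game$ is idempotent with respect to $\eqoLT$, so the proof will be a short formal manipulation rather than a fresh construction of strategies.

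For the direction $h^\Game \leqoLT h^{\Game\Game}$, I would apply Proposition~\ref{prop:game-closure-reduction} with both the reducing function and the target function taken to be $h^\Game$. This gives the equivalence $h^\Game \leqLT h^\Game \iff h^\Game \leqoLT (h^\Game)^\Game$, and the left-hand side is immediate from reflexivity of $\leqLT$.

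For the direction $h^{\Game\Game} \leqoLT h^\Game$, I would chain three applications of Proposition~\ref{prop:game-closure-reduction}. First, with $g = h^\Game$ and target $h$, the reflexive instance $h^\Game \leqoLT h^\Game$ translates into $h^\Game \leqLT h$. Second, with $g = h^{\Game\Game}$ and target $h^\Game$, the reflexive instance $h^{\Game\Game} \leqoLT h^{\Game\Game}$ yields $h^{\Game\Game} \leqLT h^\Game$. Chaining these two via transitivity of $\leqLT$ produces $h^{\Game\Game} \leqLT h$, and a final application of Proposition~\ref{prop:game-closure-reduction} with $g = h^{\Game\Game}$ and target $h$ converts this back to $h^{\Game\Game} \leqoLT h^\Game$.

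There is no genuine obstacle here; the argument is essentially a three-line calculation once Proposition~\ref{prop:game-closure-reduction} is in hand. The only care needed is to track which object is playing the role of $g$ and which the role of $h$ in each instantiation, since the equivalence swaps single-query and many-query reducibilities. All the actual game-theoretic content, namely the universality of $h^\Game$ for $h$-relative computation (simulating a composite play inside a single $\mathfrak{G}(h)$-subgame and vice versa), is already bundled into the proof of Proposition~\ref{prop:game-closure-reduction}, so nothing new has to be built.
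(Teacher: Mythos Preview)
Your proposal is correct and follows essentially the same route as the paper: both directions are obtained from Proposition~\ref{prop:game-closure-reduction} and transitivity of $\leqLT$, with the hard direction going through $h^{\Game\Game}\leqLT h^\Game\leqLT h$ and then converting back. The only cosmetic difference is that the paper dismisses $h^\Game\leqoLT h^{\Game\Game}$ as ``obvious'' whereas you derive it from the same proposition, which is perfectly fine.
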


\begin{proof}
Obviously, $h^\Game\leqoLT h^{\Game\Game}$.
For the other direction, by the reflexivity of $\leqoLT $, we have $h^{\Game\Game}\leqoLT h^{\Game\Game}$, which implies that $h^{\Game\Game}\leq_{T}h^{\Game}$ by Proposition \ref{prop:game-closure-reduction}.
Similarly, we also have $h^\Game\leqLT h$.
Since $\leqLT $ is transitive by Proposition \ref{prop:Turing-is-preorder}, we have $h^{\Game\Game}\leqLT h$.
Hence, we get $h^{\Game\Game}\leqoLT h^\Game$ by Proposition \ref{prop:game-closure-reduction}.
\end{proof}

\begin{remark}
In the context of $\N^\N$-computability, the closure operator $-^\Game$ restricted to partial multifunctions is essentially the same as the diamond operator in \cite{NePa18,westrick2020note}.
\end{remark}

\begin{remark}
Several variants of Weihrauch reducibility can be explained by using bilayer functions in the context of $\N^\N$-computation.
For instance, $f$ is computable reducible to $g$ in the sense of \cite{Dzh16,HiJo16} if and only if $f$ is one-query bilayered Turing reducible to $(g\mid{\tt Advice}_\N)$ (see Definition \ref{def:mul-plus-mono}) if we properly extend the above notions to the context of $\N^\N$-computability.
The notion of omniscient computable/Weihrauch reducibility \cite{MoPa19,DHR20,DzPa20} can also be explained in the bilayer context.
\end{remark}

The bilayered Turing degrees of concrete bilayer functions are examined in Sections \ref{sec:structure-of-LT-topologies} and \ref{sec:other-topologies}.

\section{Lawvere-Tierney topology}\label{sec:LT-topologies-chara}

\subsection{Realizability}

For sets $p,q\subseteq\mathcal{P}(\N)$ we define $p\leq q$ if there exists a partial computable function $\varphi\pcolon\N\to\N$ such that $n\in p$ implies $\varphi(n)\in q$.
Then $(\mathcal{P}(\N),\leq)$ forms a Heyting algebra, where the Heyting operations are given as follows:
\begin{align*}
p\land q&=\{\langle m,n\rangle:m\in p\;\land\;n\in q\},\\
p\lor q&=\{\langle 0,n\rangle:n\in p\}\cup\{\langle 1,n\rangle:n\in q\},\\
p\rightarrow q&=\{e:(\forall n\in\N)\;[n\in p\;\to\;\varphi_e(n)\in q]\}.
\end{align*}

Now we put $\Omega=\mathcal{P}(\mathbb{N})$, and consider $\Omega$ as the set of truth values in the world of computable mathematics.
Indeed, the morphism tracked by $true\colon 1\to\Omega$, where $true(\ast)=\N$, is a subobject classifier in the effective topos.
For more information on the effective topos, see \cite{Hey82,Lee,vOBook}.

Let $A$ be a propositional formula, where all propositional variables belong to $\Omega$.
Then $A$ can be thought of as an element of $\Omega$ by using Heyting operations on $\Omega$.
We say that {\em $e$ realizes $A$} if $e$ belongs to $A$ under the above interpretation.
We also say that {\em $e$ realizes $\forall p\;A(p)$} if $e\in A(p)$ for any $p\in\Omega$.
Then, {\em $A$ is realizable} if some $e\in\N$ realizes $A$. 
%

\subsection{Lawvere-Tierney topologies}\label{sec:LT-topologies}

In this section, we reveal the hidden relationship between bilayered Turing degrees and Lawvere-Tierney topologies (Definition \ref{def:LT-topology}) on the effective topos.
As mentioned in Section \ref{sec:summary}, we regard an Lawvere-Tierney topology on the effective topos as a kind of data that indicate how much non-computability to add to the world, and thus, a topology plays the same role as an oracle.
In this regard, Hyland \cite{Hey82} found an embedding of the Turing degrees (of total single-valued functions on $\N$) into the lattice of Lawvere-Tierney topologies on the effective topos.
Hyland's embedding can be extended to partial functions or even partial multifunctions on $\N$.
By extending this further, we show that there exists an isomorphism between the bilayered Turing degrees and the lattice of Lawvere-Tierney topologies on the effective topos (Corollary \ref{cor:Turing-vs-LT}).
This guarantees that, in the strict sense, any topology on the effective topos can be identified with a (bilayer) oracle.
Note that most of the results in Section \ref{sec:LT-topologies} have almost been proven by Lee and van Oosten \cite{LvO}, although their language is completely different from ours, and in particular they do not give any computational interpretation of their notions.

A function $\beta\colon\Omega\to\Omega$ is said to be {\em computably monotone} if the following is realizable:
\[\forall p,q\;[(p\to q)\to(\beta(p)\to \beta(q))].\]

In other words, there exists $e$ such that for any $p,q\in\Omega$ if $a$ realizes $p\to q$ then $\varphi_e(a)$ realizes $\beta(p)\to \beta(q)$.
We define a preorder on computably monotone functions on $\Omega$ as follows:
\[\alpha\leq_{\rm rea}\beta\iff\mbox{``$\forall p\;[\alpha(p)\to \beta(p)]$'' is realizable},\]
that is, there exists $e$ such that, for any $p\in\Omega$, $e$ realizes $\alpha(p)\to \beta(p)$.
For a preorder, its quotient by the induced equivalence relation is called the poset reflection.

\begin{theorem}\label{thm:one-quary-vs-monotone}
The poset reflections of the following preorders are isomorphic:
\begin{itemize}
\item The one-query bilayered Turing preorder $\leqoLT $ on bilayer functions.
\item The preorder $\leq_{\rm rea}$ on computably monotone functions on $\Omega$.
\end{itemize}
\end{theorem}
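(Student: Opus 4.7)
The plan is to exhibit a map $f\mapsto\beta_f$ from bilayer functions to computably monotone functions on $\Omega$ that both preserves and reflects the two preorders, and then to show it is essentially surjective; this suffices to produce an isomorphism on poset reflections. For a bilayer function $f\pcolon\N\times\Lambda\tto\N$, I would define
\[\beta_f(p)=\{\langle n,e\rangle : \exists c\,[\,(n\mid c)\in\dom(f)\ \land\ \forall m\in f(n\mid c)\,(\varphi_e(m)\in p)\,]\}.\]
Intuitively, $\langle n,e\rangle$ certifies that the public instance $n$ has some secret parameter $c$ whose $f$-solutions are sent into $p$ by the computable function $\varphi_e$. Computable monotonicity follows from the $s$-$m$-$n$ theorem: given $a$ realizing $p\to q$ and $\langle n,e\rangle\in\beta_f(p)$, the pair $\langle n,e'\rangle$ with $\varphi_{e'}(m)=\varphi_a(\varphi_e(m))$ belongs to $\beta_f(q)$, and the required uniformity in $a$ is exactly what $s$-$m$-$n$ provides.

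To show $f\leqoLT g$ implies $\beta_f\leq_{\rm rea}\beta_g$, I would take reductions $H,K,L$ witnessing $f\leqoLT g$ and send $\langle n,e\rangle\in\beta_f(p)$ to $\langle H(n),e'\rangle$ where $\varphi_{e'}(m)=\varphi_e(K(n,m))$; if $c$ witnesses membership in $\beta_f(p)$, then $L(n,c)$ witnesses membership of the image in $\beta_g(p)$. For the converse, given a computable realizer $r$ of $\beta_f\to\beta_g$, I would fix $(n\mid c)\in\dom(f)$ and probe $r$ at the \emph{test instance} $p:=f(n\mid c)$: the identity realizer trivially gives $\langle n,\mathrm{id}\rangle\in\beta_f(p)$, so $\varphi_r(\langle n,\mathrm{id}\rangle)=\langle n^{*},e^{*}\rangle\in\beta_g(p)$. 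Since $\varphi_r$ sees only $n$ and not $c$ or $p$, both $n^{*}$ and $e^{*}$ depend only on $n$, which lets me set $H(n)=n^{*}$, $K(n,m)=\varphi_{e^{*}}(m)$, and let $L(n,c)$ be any advice parameter $c^{*}$ witnessing that $(n^{*}\mid c^{*})\in\dom(g)$ and $\varphi_{e^{*}}$ maps $g(n^{*}\mid c^{*})$ into $f(n\mid c)$. Classical choice supplies $L$, and the verification $m\in g(H(n)\mid L(n,c))\Rightarrow K(n,m)\in f(n\mid c)$ is immediate.

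For essential surjectivity, given a computably monotone $\beta$, I would define $f_\beta\pcolon\N\times\mathcal{P}(\N)\tto\N$ by $\dom(f_\beta)=\{(n\mid A):n\in\beta(A)\}$ and $f_\beta(n\mid A)=A$. Then $\beta\leq_{\rm rea}\beta_{f_\beta}$ is witnessed by $n\mapsto\langle n,\mathrm{id}\rangle$ (taking $A=p$), and conversely, if $\langle n,e\rangle\in\beta_{f_\beta}(p)$ with witness $A$, then $\varphi_e$ realizes $A\to p$, so applying the fixed monotonicity realizer of $\beta$ to $e$ yields a computable recipe sending $n\in\beta(A)$ to an element of $\beta(p)$, giving $\beta_{f_\beta}\leq_{\rm rea}\beta$. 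I expect the main obstacle to be the converse-reduction direction: everything hinges on probing $r$ at the canonical test instance $p=f(n\mid c)$ with the identity realizer, which is what forces the index $e^{*}$ and target $n^{*}$ to depend computably on $n$ alone, concentrating all non-computable content in the secret inner reduction $L$. Once this idea is in place, the other three steps are essentially bookkeeping via $s$-$m$-$n$.
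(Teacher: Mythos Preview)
Your proposal is correct and follows essentially the same route as the paper's proof: your $\beta_f$ is precisely the paper's $f^\to$, your $f_\beta$ is the paper's $\beta^\leftarrow$, and the key ``probe at $p=f(n\mid c)$ with the identity realizer'' trick you isolate for the converse direction is exactly what the paper does. The only difference is that you spell out the verification of $\beta\equiv_{\rm rea}\beta_{f_\beta}$ directly (using the monotonicity realizer of $\beta$), whereas the paper defers this step to Lee--van Oosten \cite{LvO}; your direct argument is fine and arguably more self-contained.
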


\begin{proof}
Given a bilayer function $g$, we define a function $g^\to\colon\Omega\to\Omega$ as follows:
\[\langle n,e\rangle\in g^\to(p)\iff\mbox{$e$ realizes $g(n\mid c)\to p$ for some $c$.}\]

Roughly speaking, $g^\to(p)$ is a problem that asks us to solve a problem $p$ with the help of $g$.
Of the solutions $n$ and $e$ to $g^\to(p)$, we sometimes call $n$ an {\em inner reduction} and $e$ an {\em outer reduction}.
Indeed, if we put $\dot{p}(\ast\mid\ast)=p$, then $\langle n,e\rangle \in g^\to(p)$ if and only if $\langle n,e\rangle$ witnesses $\dot{p}\leqoLT g$.
Note that, if $\theta$ is a bilayer function, $\theta^\to$ is essentially the same as $G_\theta$ under the notation in Lee-van Oosten \cite[page 873]{LvO}.
One can easily see that $g^\to$ is computably monotone (see also \cite{LvO}).
We first show the following:
\[g\leqoLT h\iff g^\to\leq_{\rm rea} h^\to.\]

For the forward direction, assume that $\langle H,K,L\rangle$ witnesses $g\leqoLT h$, and $\langle n,e\rangle$ witnesses $\dot{p}\leqoLT g$ with some $c$.
Then, the composition $H(n)$ of inner reductions and the composition $\varphi_e\circ K$ of outer reductions witness $\dot{p}\leqoLT h$ with $L(n,c)$.
This is because for any solution $y\in h(H(n)\mid L(n,c))$ we have $K(n,y)\in g(n\mid c)$ as $\langle H,K,L\rangle$ is a reduction triple, and for any $z\in g(n\mid c)$ we have $\varphi_e(z)\in\dot{p}(\ast\mid\ast)=p$; hence $\varphi_e\circ K(n,y)\in p$.
Put $n'=H(n)$ and let $e'_n$ be an index of the computable function $y\mapsto \varphi_e\circ K(n,y)$.
Then we have $\langle n',e'_n\rangle\in h^\to(p)$.
Clearly $\langle n,e\rangle\mapsto\langle n',e'_n\rangle$ is computable and independent of $p$.
Hence we get $g^\to\leq_{\rm rea}h^\to$.

For the backward direction, let $e$ be a realizer for $g^\to(p)\to h^\to(p)$ for any $p\in\Omega$.
Given $(n\mid c)$, let us consider $p=g(n\mid c)$.
It is obvious that $\langle n,{\rm id}\rangle$ witnesses $\dot{p}\leqoLT g$.
Thus, $\varphi_e(n,{\rm id})=\langle m_n,d_n\rangle$ witnesses $\dot{p}\leqoLT h$.
In other words, $y\in h(m_n\mid c')$ implies $\varphi_{d_n}(y)\in\dot{p}(\ast\mid\ast)=g(n\mid c)$ for some $c'$.
One can find an index $d$ such that $\varphi_d(n,x)=\varphi_{d_n}(x)$.
Then, $n\mapsto m_n$ is an inner reduction, $c\mapsto c'$ is a secret inner reduction, and $\varphi_d$ is an outer reduction for $g\leqoLT h$.

Now, given a computably monotone function $\beta\colon\Omega\to\Omega$, define a bilayer function $\beta^\leftarrow$ as follows:
\[{\rm dom}(\beta^\leftarrow)=\{(n\mid c):n\in \beta(c)\},\qquad \beta^\leftarrow(n\mid c)=c.\]

Note that $\beta\mapsto \beta^{\leftarrow}$ is essentially the same as the transformation $f\mapsto\theta$ in the proof of Lee-van Oosten \cite[Theorem 2.4]{LvO}.
Therefore, as in the proof of \cite[Theorem 2.4]{LvO} (that shows $f\equiv F(f)\equiv G_\theta$ under their terminology), one can see that $(\beta^{\leftarrow})^{\rightarrow}\equiv_{\rm rea}\beta$.
This concludes the proof.
\end{proof}

\begin{remark}
Recall that, in \cite{Bau21}, one-query bilayered Turing reducibility is called extended Weihrauch reducibility.
Together with the result in Bauer \cite{Bau21} that extended Weihrauch degrees and instance degrees (over a relative partial combinatory algebra) are equivalent preorders, one can deduce that the orders in Theorem \ref{thm:one-quary-vs-monotone} are also isomorphic to instance degrees over Kleene's first algebra.
\end{remark}

By the proof of Theorem \ref{thm:one-quary-vs-monotone}, note that we also have $(g^\to)^\leftarrow\eqoLT g$ and
\[g\leq_{\rm rea}h\iff g^{\leftarrow}\leqoLT h^{\leftarrow}.\]

We next consider the notion of Lawvere-Tierney topology (also known as local operator or geometric modality), which is, in general, defined as a certain operator on the truth-value object in a given topos.
In the effective topos, it is essentially the same as the following notion (see also \cite{Lee,LvO}):

\begin{definition}\label{def:LT-topology}
A function $j\colon\Omega\to\Omega$ is a {\em Lawvere-Tierney topology} if all of the following are realizable
\begin{enumerate}
\item $\forall p\;[p\to j(p)]$.
\item $\forall p\;[j(p\land q)\leftrightarrow j(p)\land j(q)]$.
\item $\forall p\; [j(j(p))\to j(p)]$
\end{enumerate}
\end{definition}

Recall from the proof of Theorem \ref{thm:one-quary-vs-monotone} that $g^\to(p)$ is the set of reduction pairs for $\dot{p}\leqoLT g$ (where a secret reduction is not included).
Therefore, by Proposition \ref{prop:game-closure-reduction}, $g^{\Game\to}(p)$ is essentially the set of \Art's winning strategies for $\dot{p}\leqLT g$.
We next see that the function $g^{\Game\to}\colon\Omega\to\Omega$ is always a Lawvere-Tierney topology.

\begin{obs}
Let $h$ be a bilayer function.
Then, $h^{\Game\to}\colon\Omega\to\Omega$ is a Lawvere-Tierney topology.
\end{obs}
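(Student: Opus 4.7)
The plan is to read $h^{\Game\to}(p)$ as the set of reductions witnessing $\dot p\leqLT h$. Unpacking the definition of $g^\to$ from the proof of Theorem \ref{thm:one-quary-vs-monotone} and applying Proposition \ref{prop:game-closure-reduction}, a pair $\langle\tau,e\rangle$ lies in $h^{\Game\to}(p)$ precisely when there is a Nim strategy $\eta$ such that $(\tau\mid\eta)$ is a winning \Art-\Ni strategy for $\mathfrak{G}(h)$, with Arthur's termination value post-composed through $\varphi_e$ landing in $p$. Under this reading, the three axioms of Definition \ref{def:LT-topology} translate into natural closure properties of bilayered Turing reductions, each witnessed by a single uniformly computable operation on codes of strategies.

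For axiom (1), given $n\in p$ I produce the Arthur strategy $\tau_n$ that declares termination with $n$ immediately (querying nothing), paired with the constant outer reduction $e_n\equiv n$; since $h^\Game(\tau_n\mid\eta)=\{n\}$ for every $\eta$, this indeed lies in $h^{\Game\to}(p)$, and the map $n\mapsto\langle\tau_n,e_n\rangle$ is plainly computable.

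For axiom (2), the forward direction is the projection trick: post-compose the outer reduction with $\pi_0$ and $\pi_1$ respectively. The backward direction uses sequential composition of strategies: given $\langle\tau_i,e_i\rangle$ and $\eta_i$ witnessing $\dot p\leqLT h$ and $\dot q\leqLT h$ respectively, I construct a combined Arthur strategy that first simulates $\tau_1$ (with Nim playing $\eta_1$) until it would declare termination with some $m_1$, then simulates $\tau_2$ (with Nim playing $\eta_2$) until it would declare termination with some $m_2$, and finally declares termination with $\langle m_1,m_2\rangle$; the outer reduction sends $\langle m_1,m_2\rangle$ to $\langle\varphi_{e_1}(m_1),\varphi_{e_2}(m_2)\rangle\in p\land q$.

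For axiom (3), idempotence is essentially transitivity of $\leqLT$ realised at the game level, in the same spirit as Proposition \ref{prop:game-closure-twice}. Given $\langle\tau,e\rangle\in h^{\Game\to}(h^{\Game\to}(p))$ with secret $\eta$, I construct the strategy that first plays $\tau$ (with Nim playing $\eta$) to obtain some $m\in h^\Game(\tau\mid\eta)$, then reads $\langle\tau',e'\rangle:=\varphi_e(m)$, plays $\tau'$ (with Nim playing a secret $\eta'$ that witnesses $\langle\tau',e'\rangle\in h^{\Game\to}(p)$) to obtain some $m'\in h^\Game(\tau'\mid\eta')$, and declares termination with $\langle m,m'\rangle$; the outer reduction returns $\varphi_{\pi_1(\varphi_e(m))}(m')\in p$. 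Nim, being non-computable, can choose the required $\eta'$ from $m$. The main obstacle throughout is to keep every realizer computable and independent of the propositional variables; this is delivered automatically because all the strategy-gluing operations are primitive recursive in the codes involved.
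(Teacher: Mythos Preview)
Your proof is correct and follows essentially the same approach as the paper's. The only cosmetic differences are: in axiom (2) the paper speaks of running the two strategies ``in parallel'' whereas you compose them sequentially (both work equally well in $\mathfrak{G}(h)$), and in axiom (3) the paper has \Ar declare termination directly with $\varphi_{e'(u)}(v)$ and uses the identity as outer reduction, while you terminate with the pair $\langle m,m'\rangle$ and push the computation into the outer reduction --- again an inessential repackaging of the same serial-composition idea behind Proposition~\ref{prop:Turing-is-preorder}.
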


\begin{proof}
(1)
If one can solve a problem $p$ without any help, it is clear that one can also solve the problem $p$ with the help of $h^\Game$.
(2)
For the backward direction, if one can solve problems $p$ and $q$ with the help of $h^\Game$, then by running these strategies in parallel, one can also solve the problem $p\land q$ with the help of $h^\Game$.
The forward direction is obvious.

(3)
By definition, $\langle \tau,e\rangle\in h^{\Game\to}h^{\Game\to}(p)$ if and only if $e$ realizes $h^\Game(\tau\mid\eta)\to h^{\Game\to}(p)$ for some $\eta$.
Thus, if $u\in h^\Game(\tau\mid\eta)$ and $\varphi_e(u)=\langle \tau'(u),e'(u)\rangle$ then $e'(u)$ realizes $h^\Game(\tau'(u)\mid\eta'(u))\to p$ for some $\eta'(u)$.
As in the proof of Proposition \ref{prop:Turing-is-preorder}, we combine two games, but this time in series.
On the first game $\mathfrak{G}(h)$, \Ar and \Ni follow their strategies $(\tau\mid\eta)$ with one exception:
Even if the strategy $\tau$ declares termination $u$, then \Ar do not declare termination, but move on to the next game which is also $\mathfrak{G}(h)$.
Then \Ar and \Ni next follow their strategies $(\tau'(u)\mid\eta'(u))$ with one exception:
If the strategy $\tau$ declares termination $v$, then \Ar declares termination with $\varphi_{e'(u)}(v)$.
Note that if \Me obeys the rule, then we always have $\varphi_{e'(u)}(v)\in p$.
This can be viewed as a single game which is also $\mathfrak{G}(h)$, and we write $(\tau''\mid\eta'')$ for the \Art-\Ni strategy described above.
It is easy to check that $(\tau''\mid\eta'')\in{\rm dom}(h^\Game)$, and if $w\in h^\Game(\tau''\mid\eta'')$ then $w\in p$.
Therefore, the identity map realizes $h^\Game(\tau''\mid\eta'')\to p$.
Hence, $\langle \tau'',{\rm id}\rangle\in h^{\Game\to}(p)$.
Clearly, $\tau\mapsto\tau''$ is computable, and thus an index of the computable function $\langle \tau,e\rangle\mapsto\langle \tau'',{\rm id}\rangle$ realizes $h^{\Game\to}h^{\Game\to}(p)\to h^{\Game\to}(p)$ for all $p$.
\end{proof}

For any monotone function $\beta\colon\Omega\to\Omega$, consider $L(\beta)=\beta^{\leftarrow\Game\to}$.
By the above observation, $L(\beta)$ is always a topology.

\begin{theorem}[see also {\cite[Proposition 1.2]{LvO}}]
Let $\beta\colon\Omega\to\Omega$ be a computably monotone function.
Then, $L(\beta)$ is the $\leq_{\rm rea}$-least topology such that $L(\beta)\geq_{\rm rea}\beta$.
\end{theorem}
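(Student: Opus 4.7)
The plan is to verify three facts: (a) $L(\beta)$ is a Lawvere-Tierney topology; (b) $\beta \leq_{\rm rea} L(\beta)$; and (c) every topology $j$ with $\beta\leq_{\rm rea}j$ satisfies $L(\beta)\leq_{\rm rea}j$. Fact (a) is the content of the preceding observation. For (b), Theorem \ref{thm:one-quary-vs-monotone} (together with the remark right after it) gives $\beta\equiv_{\rm rea}(\beta^\leftarrow)^\to$; the trivial reduction $\beta^\leftarrow\leqLT\beta^\leftarrow$ passes through Proposition \ref{prop:game-closure-reduction} to $\beta^\leftarrow\leqoLT\beta^{\leftarrow\Game}$; and applying the order-preserving operator $(-)^\to$ from Theorem \ref{thm:one-quary-vs-monotone} yields $\beta\equiv_{\rm rea}(\beta^\leftarrow)^\to\leq_{\rm rea}\beta^{\leftarrow\Game\to}=L(\beta)$.

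For (c), I first pull $\beta\leq_{\rm rea}j$ back via Theorem \ref{thm:one-quary-vs-monotone} to $\beta^\leftarrow\leqoLT j^\leftarrow$. The operator $(-)^\Game$ is $\leqoLT$-monotonic: since every one-query winning strategy is a winning strategy, $g\leqoLT h$ implies $g\leqLT h$; combined with $g^\Game\leqLT g$ (Proposition \ref{prop:game-closure-reduction} applied to reflexivity of $\leqoLT$) and the transitivity of $\leqLT$ (Proposition \ref{prop:Turing-is-preorder}), this gives $g^\Game\leqLT h$, whence $g^\Game\leqoLT h^\Game$ by one more application of Proposition \ref{prop:game-closure-reduction}. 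Hence $\beta^{\leftarrow\Game}\leqoLT j^{\leftarrow\Game}$, and applying $(-)^\to$ gives $L(\beta)\leq_{\rm rea}L(j)$. So minimality reduces to the Key Lemma: for every topology $j$, $L(j)\leq_{\rm rea}j$.

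Via Theorem \ref{thm:one-quary-vs-monotone} and Proposition \ref{prop:game-closure-reduction}, the Key Lemma is equivalent to the single-query reduction $j^{\leftarrow\Game}\leqoLT j^\leftarrow$. I would exhibit this directly: on input $(\tau\mid\eta)$, take the secret input $L(\tau,\eta)=j^{\leftarrow\Game}(\tau\mid\eta)$ and the outer reduction $K(\tau,m)=m$, and construct the public input $H(\tau)=n_\tau\in j(j^{\leftarrow\Game}(\tau\mid\eta))$ by recursion over the well-founded game tree generated by $\tau$ (termination is guaranteed by $(\tau\mid\eta)\in\dom(j^{\leftarrow\Game})$, and the recursion theorem supplies a computable index). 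At a leaf where \Ar terminates with $u$, axiom (1) of Definition \ref{def:LT-topology} yields $n_\tau\in j(\{u\})$. At an internal node whose query is $n_v$, the inductive hypothesis provides $n_{\tau_m}\in j(j^{\leftarrow\Game}(\tau_m\mid\eta_m))$ for each \Mer-response $m$; the subtree inclusion $j^{\leftarrow\Game}(\tau_m\mid\eta_m)\subseteq j^{\leftarrow\Game}(\tau\mid\eta)$ and computable monotonicity of $j$ turn the map $m\mapsto n_{\tau_m}$ into a realizer of $c_v\to j(j^{\leftarrow\Game}(\tau\mid\eta))$ for \Nim's secret $c_v$; monotonicity once more promotes this to $j(c_v)\to j(j(j^{\leftarrow\Game}(\tau\mid\eta)))$, which applied to $n_v\in j(c_v)$ and then flattened by axiom (3) produces $n_\tau$, computable from $\tau$ alone.

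I expect the main obstacle to be precisely this independence from $\eta$: the recursive step appears to need \Nim's secret $c_v$, but it only uses the realizer $n_v$ of $j(c_v)$ together with a uniformly defined realizer of ``$c_v$ implies $j(j^{\leftarrow\Game}(\tau\mid\eta))$,'' and the dependence on $c_v$ evaporates after monotonicity and axiom (3). Care is also required to orchestrate the recursion theorem so that the recursive index is produced uniformly in $\tau$. The overall argument is the game-theoretic counterpart of Lee--van Oosten \cite[Proposition 1.2]{LvO}, fitted into the framework of Section \ref{sec:generalized-reducibility}.
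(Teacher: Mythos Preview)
Your argument is correct and takes a genuinely different route from the paper's.  For the minimality step the paper introduces the second-order closure
\[
L'(\beta)(p)\;:=\;\forall q\,\bigl[(p\to q)\land(\beta(q)\to q)\to q\bigr],
\]
cites \cite[Proposition~1.2]{LvO} for the implication $\beta\leq_{\rm rea}j\Rightarrow L'(\beta)\leq_{\rm rea}j$, and then shows $L(\beta)\leq_{\rm rea}L'(\beta)$ by using the hypothetical realizer ${\tt b}$ of $\beta(q)\to q$ to play \Mer's responses in $\mathfrak{G}(\beta^\leftarrow)$ until \Ar terminates.  You instead first pass through the monotonicity of $(-)^\Game$ to reduce to the single Key Lemma $j^{\leftarrow\Game}\leqoLT j^\leftarrow$ for topologies $j$, and then collapse the game tree one level at a time via the recursion theorem, using axiom~(1) at leaves and monotonicity plus axiom~(3) at internal nodes.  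Your route is more self-contained (no appeal to $L'$ or the cited \cite{LvO} result) and makes transparent exactly how the topology axioms absorb iterated queries; the paper's route is shorter and exhibits the connection with the impredicative least-closure description $L'(\beta)$.

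One point worth sharpening in your write-up: the phrase ``recursion over the well-founded game tree generated by $\tau$'' conflates definition and correctness.  The function $F$ is defined by the recursion theorem and its evaluation at $\tau$ does \emph{not} recurse---it only reads $\tau(\varepsilon)$, produces via s-m-n an index for $m\mapsto F(\tau_m)$, and applies the fixed realizers for monotonicity and axiom~(3); hence $F(\tau)$ halts as soon as $\tau(\varepsilon)$ does.  The well-foundedness enters only in the \emph{correctness} proof, which is a transfinite induction on the rank of the tree of legal plays determined by $(\tau\mid\eta)$, showing $F(\tau)\in j\bigl(j^{\leftarrow\Game}(\tau\mid\eta)\bigr)$.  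Stating it this way dispels any worry that $F$ needs access to $\eta$ to terminate.
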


\begin{proof}
First, since $\beta^\leftarrow\leqoLT  \beta^{\leftarrow\Game}$, we have $\beta\equiv_{\rm rea} \beta^{\leftarrow\to}\leq_{\rm rea} \beta^{\leftarrow\Game\to}=L(\beta)$ by Theorem \ref{thm:one-quary-vs-monotone}; that is, $\beta\leq_{\rm rea} L(\beta)$ always holds.
Thus, it remains to show that $\beta\leq_{\rm rea}j$ implies $L(\beta)\leq_{\rm rea}j$ for any topology $j$.
To prove this, as in \cite[Proposition 1.2]{LvO}, consider the following:
\[L'(\beta)(p):=\forall q\ [[(p\to q)\land (\beta(q)\to q)]\to q].\]

As shown in \cite[Proposition 1.2]{LvO}, $\beta\leq_{\rm rea}j$ implies $L'(\beta)\leq_{\rm rea} j$.
Hence, it remains to show $L(\beta)\leq_{\rm rea}L'(\beta)$.
Here, recall that a realizer for $L(\beta)(p)=\beta^{\leftarrow\Game\rightarrow}(p)$ is a pair $\langle{\tt d},{\tt e}\rangle$ of an inner reduction ${\tt d}$ and an outer reduction ${\tt e}$ for $\dot{p}\leqoLT \beta^{\leftarrow\Game}$, i.e., ${\tt e}$ realizes $\beta^{\leftarrow\Game}({\tt d}\mid c)\to p$ for some $c$.
In this case, we must have $({\tt d}\mid c)\in{\rm dom}(\beta^{\leftarrow\Game})$, which means that $({\tt d}\mid c)$ is a winning \Art-\Ni strategy for the game $\mathfrak{G}(\beta^\leftarrow)$.

To compute a realizer of $L'(\beta)(p)$, assume that we are given a realizer ${\tt a}$ of $p\to q$ and a realizer ${\tt b}$ of $\beta(q)\to q$ in the premise of $L'(\beta)(p)$, which are independent of $q$.
On some play of the game $\mathfrak{G}(\beta^\leftarrow)$, if $(n\mid z)$ is \Ar and \Nim's queries to $\beta^{\leftarrow}$ in their moves (without declaring termination) at some round, then we have $(n\mid z)\in{\rm dom}(\beta^\leftarrow)$, which means that $n\in \beta(z)$ and $\beta^\leftarrow(n\mid z)=z$.
Since ${\tt b}$ realizes $\beta(z)\to z$, we have ${\tt b}\cdot n\in \beta^{\leftarrow}(n\mid z)$.
Hence, ${\tt b}$ yields \Mer's strategy which obeys the rule.
Therefore, one can simulate one of the plays of the game $\mathfrak{G}(\beta^\leftarrow)$ from the information in {\tt d}, c, and {\tt b}.
Since $({\tt d}\mid c)$ is a winning \Art-\Ni strategy, and \Mer's strategy {\tt b} obeys the rule, \Ar declares termination at some round along this play.
In particular, one can compute \Art's final move in this play, which yields some $m\in \beta^{\leftarrow\Game}({\tt d}\mid c)$.
By applying {\tt e} to this result, we can get a realizer for $p$.
Furthermore, by applying {\tt a} to this result, we get a realizer for $q$.
This procedure yields a realizer for $L(\beta)(p)\to L'(\beta)(p)$ independent of $p$, and thus, $L(\beta)\to L'(\beta)$ is realizable.
\end{proof}

As in \cite{LvO}, we define 
\[\alpha\leq_{\rm L}\beta\iff \alpha\leq_{\rm rea}L(\beta).\]

In summary, for bilayer functions $f$ and $g$, we obtain
\[f\leqLT g\iff f\leqoLT g^\Game\iff f^{\to}\leq_{\rm rea}g^{\Game\to}=L(g^\to)\iff f^\to\leq_{\rm L}g^\to.\]

Here, the first equivalence follows from Proposition \ref{prop:game-closure-reduction}, and the second one follows from Theorem \ref{thm:one-quary-vs-monotone}.
As any computably monotone function is $\equiv_{\rm rea}$-equivalent to a function of the form $f^\to$, this concludes the following:

\begin{cor}\label{cor:Turing-vs-LT}
The poset reflections of the following preorders are isomorphic:
\begin{itemize}
\item The bilayered Turing preorder $\leqLT$ on bilayer functions.
\item The preorder $\leq_{\rm L}$ on computably monotone functions on $\Omega$.
\item The preorder $\leq_{\rm rea}$ on Lawvere-Tierney topologies.
\end{itemize}
\end{cor}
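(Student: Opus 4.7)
The plan is to verify the chain of equivalences
\[ f\leqLT g \;\iff\; f\leqoLT g^\Game \;\iff\; f^\to\leq_{\rm rea} g^{\Game\to} \;\iff\; f^\to\leq_{\rm L}g^\to \]
that appears immediately above, and then read off the three claimed isomorphisms from it. The first equivalence is Proposition \ref{prop:game-closure-reduction}, applied with the bilayer function $g^\Game$ on the right. The second equivalence is Theorem \ref{thm:one-quary-vs-monotone} applied to the pair $(f,g^\Game)$. The third equivalence is the definition of $\leq_{\rm L}$, once one justifies the identification $L(g^\to)\equiv_{\rm rea}g^{\Game\to}$; this I would obtain from the remark $(g^\to)^\leftarrow\eqoLT g$ noted after the proof of Theorem \ref{thm:one-quary-vs-monotone}, together with the fact that both operators $-^\Game$ and $-^\to$ respect the relevant equivalences.

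Once the chain is established, the first isomorphism (bilayered Turing degrees versus computably monotone functions under $\leq_{\rm L}$) drops out: the map $[f]\mapsto[f^\to]$ is well defined and order preserving by the chain, and it is essentially surjective because $(\beta^\leftarrow)^\to\equiv_{\rm rea}\beta$ holds for every computably monotone $\beta$, so $\beta^\leftarrow$ is a bilayer preimage of $\beta$ up to $\equiv_{\rm rea}$, and a fortiori up to $\equiv_{\rm L}$.

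For the second isomorphism (computably monotone functions under $\leq_{\rm L}$ versus Lawvere--Tierney topologies under $\leq_{\rm rea}$), I would use the operator $L$ as the bridge and extract two consequences of its universal property. First, if $j$ is already a Lawvere--Tierney topology, then $L(j)\equiv_{\rm rea}j$, since $j$ is itself a topology above $j$ and $L(j)$ is the $\leq_{\rm rea}$-least such; in particular $L$ is idempotent. Second, every computably monotone $\beta$ satisfies $\beta\equiv_{\rm L}L(\beta)$: the inequality $L(\beta)\leq_{\rm L}\beta$ unfolds by definition to $L(\beta)\leq_{\rm rea}L(\beta)$, while $\beta\leq_{\rm L}L(\beta)$ unfolds to $\beta\leq_{\rm rea}L(L(\beta))\equiv_{\rm rea}L(\beta)$ using idempotence. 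Consequently, on topologies the two orders coincide, and every $\equiv_{\rm L}$-class of computably monotone functions contains a topology, namely $L(\beta)$. Hence $\beta\mapsto L(\beta)$ descends to the desired isomorphism of poset reflections.

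The main obstacle I foresee is the clean handling of the identification $L(g^\to)\equiv_{\rm rea}g^{\Game\to}$ on which the whole chain pivots. Unpacking it requires showing that $-^\Game$ is monotone under $\leqoLT$, which I would prove by combining Propositions \ref{prop:game-closure-reduction} and \ref{prop:game-closure-twice} with the easy observation $g\leqoLT g^\Game$ (witnessed by the one-query strategy that simply relays its input to the universal machine). Monotonicity of $-^\to$ under $\eqoLT$ is then immediate from Theorem \ref{thm:one-quary-vs-monotone}, and applying the two operators in turn to $(g^\to)^\leftarrow\eqoLT g$ yields the required equivalence.
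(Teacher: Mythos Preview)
Your proposal is correct and follows the same route as the paper: the chain of equivalences is exactly what the paper states just before the corollary, invoking Proposition~\ref{prop:game-closure-reduction} and Theorem~\ref{thm:one-quary-vs-monotone}, together with the definition of $\leq_{\rm L}$ and surjectivity of $(-)^{\to}$ up to $\equiv_{\rm rea}$. You are in fact more careful than the paper about the identification $L(g^\to)\equiv_{\rm rea}g^{\Game\to}$ (which the paper writes as an equality without further comment) and about unpacking the third isomorphism via the idempotence of $L$, but these are elaborations of the same argument rather than a different approach.
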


In summary, for any bilayer function $g$, the map $g^{\Game\rightarrow}$ which, given a problem $p$, returns a problem asking us to giving an \Art's winning strategy for $\dot{p}\leqLT g$ is always a Lawvere-Tierney topology, and conversely, every Lawvere-Tierney topology on the effective topos can be described in this way.

\section{On the structures of Lawvere-Tierney topologies}\label{sec:structure-of-LT-topologies}

\subsection{Turing degrees of choices from co-$m$-tons}

If the domain (for public inputs) of a bilayer function $g$ is a singleton (i.e., an input is always of the form $(\ast\mid c)$), then we call $g$ a {\em basic bilayer function}.
In this section, we deal with the basic bilayer function ${\tt Error}_{m/k}$ for $m<k\in\N$ defined by
\begin{align*}
{\rm dom}({\tt Error}_{m/k})&=\{(\ast\mid A):A\subseteq\{0,\dots,k-1\}\;\land\;|A|=m\},\\
{\tt Error}_{m/k}(\ast\mid A)&=\{0,\dots,k-1\}\setminus A
\end{align*}

This is a problem such that $m$ of the $k$ choices are wrong.
In particular, one-query ${\tt Error}_{m/k}$-relative computation is the one in which $k$ computations are run in parallel, $m$ of which may be wrong.
Note that the basic bilayer function ${\tt Error}_{m/k}$ is denoted as $\mathcal{O}^k_m$ in \cite{LvO}.
Lee-van Oosten \cite{LvO} proposed to study the structure of $(\{{\tt Error}_{m/k}^\to\}_{m<k},\leq_{\rm rea},\leq_{\rm L})$.
By Theorem \ref{thm:one-quary-vs-monotone} and Corollary \ref{cor:Turing-vs-LT}, this is the same as the examining the structure of $(\{{\tt Error}_{m/k}\}_{m<k},\leqoLT ,\leqLT)$.

Of course, the basic bilayer function ${\tt Error}_{m/k}$ is closely related to the well-known notion, the lessor limited principle of omniscience (recall Example \ref{exa:LLPO}).
Here, we consider its generalization:
\begin{align*}
{\rm dom}({\tt LLPO}_{m/k})&=\{e\in\N:|\{j<k:\varphi_e(j)\downarrow\}|\leq m\},\\
{\tt LLPO}_{m/k}(e)&=\{0,\dots,k-1\}\setminus \{j<k:\varphi_e(j)\downarrow\}.
\end{align*}

Clearly, ${\tt LLPO}$ is equivalent to ${\tt LLPO}_{1/2}$.
The principle ${\tt LLPO}_{1/\ell}$ is first introduced in Richman \cite{Ri90}, and extensively studied in constructive mathematics and related areas.
For the computability-theoretic study, see Brattka-Gherardi-Pauly \cite{pauly-handbook}.
Note that one can deduce several results on the structure of $(\{{\tt LLPO}_{m/k}\}_{m<k},\leqoLT )$ from the work by Cenzer-Hinman \cite{CH08}.

One may relativize ${\tt LLPO}_{m/k}$ by replacing $\varphi_e$ with $\varphi_e^\alpha$ for a given oracle $\alpha\in 2^\N$, and then the resulting function is denoted by ${\tt LLPO}^\alpha_{m/k}$.
Recall from Example \ref{exa:pmv-is-bilayer} that a partial multifunction can be thought of as a bilayer function.
The following is obvious:

\begin{obs}\label{obs:llpo-from-error-pos}
For any oracle $\alpha$, ${\tt LLPO}^\alpha_{m/k}\leqoLT {\tt Error}_{m/k}$.
\end{obs}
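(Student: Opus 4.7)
The plan is to unpack the equivalent definition of one-query bilayered Turing reducibility given right after Definition \ref{def:LT-reducibility}, namely that $f\leqoLT g$ iff there exist computable $H,K$ and a (not necessarily computable) function $L$ with
\[m\in g(H(n)\mid L(n,c))\implies K(n,m)\in f(n\mid c),\]
and then exhibit such a triple $\langle H,K,L\rangle$ witnessing ${\tt LLPO}^\alpha_{m/k}\leqoLT{\tt Error}_{m/k}$. View ${\tt LLPO}^\alpha_{m/k}$ as a bilayer function via Example \ref{exa:pmv-is-bilayer}, so its inputs take the form $(e\mid\ast)$.

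First, since ${\tt Error}_{m/k}$ is a basic bilayer function, its public domain is a singleton; hence the inner reduction is forced to be $H(e)=\ast$, which is trivially computable. The outer reduction will simply be $K(e,j)=j$, again computable. The heart of the argument lies in the choice of secret inner reduction $L$: set
\[L(e,\ast)=A_e,\]
where $A_e$ is any fixed set of size exactly $m$ containing $\{j<k:\varphi_e^\alpha(j)\downarrow\}$. Such an $A_e$ exists precisely because $e\in{\rm dom}({\tt LLPO}^\alpha_{m/k})$ guarantees that $|\{j<k:\varphi_e^\alpha(j)\downarrow\}|\leq m$, so one may pad the halting set with arbitrary elements of $\{0,\dots,k-1\}$ up to size $m$. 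This ensures $(\ast\mid A_e)\in{\rm dom}({\tt Error}_{m/k})$.

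Now the verification is immediate: if $j\in{\tt Error}_{m/k}(\ast\mid A_e)=\{0,\dots,k-1\}\setminus A_e$, then in particular $j\notin\{j'<k:\varphi_e^\alpha(j')\downarrow\}$, i.e.\ $\varphi_e^\alpha(j)\uparrow$, so $K(e,j)=j\in{\tt LLPO}^\alpha_{m/k}(e)$.

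There is essentially no obstacle; the only point to highlight is that $L$ is permitted to be noncomputable, which is exactly what allows the advisor \Nim{} to use oracle information about which of the $\varphi_e^\alpha(j)$ converge (together with an arbitrary padding choice when fewer than $m$ of them do) to select a valid secret input for ${\tt Error}_{m/k}$. Without this freedom on $L$, the reduction would fail, since determining the halting set uniformly in $e$ and $\alpha$ is of course not computable.
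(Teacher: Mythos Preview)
Your proof is correct and is precisely the ``obvious'' argument the paper has in mind (the paper gives no proof, simply declaring the observation obvious). The triple $H(e)=\ast$, $K(e,j)=j$, and $L(e,\ast)=A_e$ with $A_e\supseteq\{j<k:\varphi_e^\alpha(j)\downarrow\}$ of size exactly $m$ is exactly the natural witness, and your emphasis that $L$ may be noncomputable is the key point.
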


If $\alpha=\emptyset$, we can do a little better.

\begin{prop}\label{prop:llpo-vs-error-pos}
${\tt LLPO}_{m/k}\leqoLT {\tt Error}_{m/k+1}$.
\end{prop}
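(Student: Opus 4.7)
The plan is to exhibit an inner reduction $H$, a secret inner reduction $L$, and an outer reduction $K$ witnessing the one-query bilayered Turing reduction. Viewing ${\tt LLPO}_{m/k}$ as a bilayer function with trivial secret via Example~\ref{exa:pmv-is-bilayer}, we need, given $e\in{\rm dom}({\tt LLPO}_{m/k})$, a public output $H(e)=\ast$ (since ${\rm dom}_{\rm pub}({\tt Error}_{m/k+1})$ is a singleton), a secret $L(e)=A_e\subseteq\{0,\dots,k\}$ with $|A_e|=m$, and a computable $K(e,m')$ such that every $m'\in\{0,\dots,k\}\setminus A_e$ is sent into ${\tt LLPO}_{m/k}(e)=\{0,\dots,k-1\}\setminus S_e$, where $S_e=\{j<k:\varphi_e(j)\downarrow\}$ and $|S_e|\leq m$ by hypothesis.

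The key idea is to exploit the one extra slot that ${\tt Error}_{m/k+1}$ provides over ${\tt LLPO}_{m/k}$. I would define $K(e,m')=m'$ for $m'<k$, and let $K(e,k)$ dovetail the computations $\varphi_e(0),\dots,\varphi_e(k-1)$ until the first stage $s$ at which $|S_e^s|=m$, then return the least element of $\{0,\dots,k-1\}\setminus S_e^s$. This is clearly computable; note that $K(e,k)$ is only guaranteed to terminate when $|S_e|=m$.

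The advisor chooses $A_e$ based on $|S_e|$. In the case $|S_e|=m$, set $A_e=S_e\subseteq\{0,\dots,k-1\}$. Then $\{0,\dots,k\}\setminus A_e=(\{0,\dots,k-1\}\setminus S_e)\cup\{k\}$. For $m'<k$ with $m'\notin S_e$ we have $K(e,m')=m'\in{\tt LLPO}_{m/k}(e)$, and for $m'=k$ the dovetailing terminates (since $|S_e^s|$ reaches $m$) and returns an element of $\{0,\dots,k-1\}\setminus S_e$. In the case $|S_e|<m$, set $A_e=S_e\cup\{k\}\cup P$, where $P\subseteq\{0,\dots,k-1\}\setminus S_e$ is arbitrary of size $m-|S_e|-1$; this is possible because $k-|S_e|\geq k-m+1\geq m-|S_e|$ using $m<k$. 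Now $k\in A_e$ blocks the potentially divergent branch of $K$, and every $m'\in\{0,\dots,k\}\setminus A_e$ satisfies $m'<k$ and $m'\notin S_e$, so $K(e,m')=m'$ lands in ${\tt LLPO}_{m/k}(e)$.

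The only subtle point, and the main obstacle to the na\"ive approach of just taking ${\tt Error}_{m/k}$, is that ${\tt LLPO}_{m/k}$ has exactly $k$ candidate outputs whereas ${\tt Error}_{m/k+1}$ provides $k+1$; this extra slot is precisely what lets the computable $K$ recover a valid LLPO answer in the worst case $|S_e|=m$ while the advisor takes care of the case $|S_e|<m$ by vetoing that slot. Everything else is bookkeeping.
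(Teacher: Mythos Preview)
Your proof is correct and follows essentially the same approach as the paper: both use the identity on $\{0,\dots,k-1\}$ for the outer reduction together with a partial computation on the extra slot $k$ that dovetails until $m$ halting computations are seen, and both have the advisor place $k$ in the forbidden set when $|S_e|<m$. You are in fact slightly more careful than the paper in padding $A_e$ to size exactly $m$ when $|S_e|<m-1$ (the paper just writes $L(e)=S_e\cup\{k\}$ there); note however that your justifying chain $k-|S_e|\geq k-m+1\geq m-|S_e|$ has a small slip in the second inequality, though the bound you actually need, $m-|S_e|-1\leq k-|S_e|$, is immediate from $m\leq k$.
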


\begin{proof}
We define a secret inner reduction $L$ as follows:
For any $e\in{\rm dom}({\tt LLPO}_{m/k})$,
\[
L(e)=
\begin{cases}
\{j<k\colon\varphi_e(j)\downarrow\}&\mbox{ if }|\{j<k\colon\varphi_e(j)\downarrow\}|=m\\
\{j<k\colon\varphi_e(j)\downarrow\}\cup\{k\}&\mbox{ if }|\{j<k\colon\varphi_e(j)\downarrow\}|<m
\end{cases}
\]

One can easily check that $(\ast\mid L(e))$ belongs to the domain of ${\tt Error}_{m/k+1}$.
For an outer reduction $K$, define $K(e,j)=j$ for any $j<k$.
To compute $K(e,k)$, wait for finding $m$ many $j<k$ such that $\varphi_e(j)\downarrow$.
If it is found at some stage, then $K(e,k)$ is defined as the least $\ell<k$ such that $j\not=\ell$ for any such $j<k$, so $K(e,k)\downarrow=\ell$ implies $\varphi_e(j)\uparrow$.
Otherwise, the computation never terminates, i.e., $K(e,k)\uparrow$.

We claim that $\langle L,K\rangle$ witnesses ${\tt LLPO}_{m/k}\leqoLT {\tt Error}_{m/k+1}$.
Assume $a\in{\tt Error}_{m/k+1}(\ast\mid L(e))$.
If $a<k$ then we have $K(e,a)=a\not\in\{j<k\colon\varphi_e(j)\downarrow\}$; hence $K(e,a)\in{\tt LLPO}_{m/k}$.
If $a=k$ then by our definition of $L(e)$, we must have $|\{j<k\colon\varphi_e(j)\downarrow\}|=m$.
The computation for $K(e,a)$ eventually recognizes this fact at some stage, and this implies that $K(e,a)\not\in\{j<k\colon\varphi_e(j)\downarrow\}$; hence $K(e,a)\in{\tt LLPO}_{m/k}$.
\end{proof}

In particular, we get ${\tt LLPO}\leqoLT {\tt Error}_{1/3}$.
One can easily see that the above proof indeed shows that ${\tt LLPO}_{m/k}\leqoLT {\tt LLPO}^{\emptyset'}_{m/k+1}$.
However, it cannot be improved any further.
To prove this, we need a little preparation.
We say that a tree $T\subseteq\N^{<\N}$ is {\em $n$-fat} if any node $\sigma\in T$ which is not a leaf has at least $n$ many immediate successors.
We use the following easy combinatorial fact, which is a slight modification of Cenzer-Hinman \cite[Proposition 2.9]{CH08} (see also Lemma \ref{lem:fat-tree} below).

\begin{fact}\label{fact:Cenzer-Hinman}
For $\ell,m\in\N$, let $T$ be an $(m\cdot\ell +1)$-fat finite tree, and $L_T$ be the set of all leaves of $T$.
Assume that every leaf of $T_\eta$ has the same length.
Then, for any function $f\colon L_T\to\ell$ there exists an $(m+1)$-fat tree $S\subseteq T$ such that $f$ is constant on the leaves of $S$.
\end{fact}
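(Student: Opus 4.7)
The plan is to prove this by induction on the common length $h$ of the leaves of $T$. The base case $h = 0$ is immediate: $T$ consists only of its root, which is then its unique leaf, and $S := T$ is vacuously $(m+1)$-fat with $f$ constant on its single leaf.

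For the inductive step, suppose the claim holds for all trees whose leaves have common length $h$, and let $T$ be an $(m\ell+1)$-fat tree whose leaves share the common length $h+1$. For each immediate successor $\sigma$ of the root of $T$, let $T_\sigma := \{\tau \in \N^{<\N} : \sigma \fr \tau \in T\}$ be the subtree hanging off $\sigma$. Each $T_\sigma$ is an $(m\ell+1)$-fat finite tree whose leaves have common length $h$, so applying the inductive hypothesis to $T_\sigma$ with the induced coloring $\tau \mapsto f(\sigma \fr \tau)$ yields an $(m+1)$-fat subtree $S_\sigma \subseteq T_\sigma$ on whose leaves $f$ is constantly equal to some value $c_\sigma \in \{0,\ldots,\ell-1\}$.

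Now the root of $T$ has at least $m\ell+1$ immediate successors, but there are only $\ell$ possible values for the color $c_\sigma$. By the pigeonhole principle I can select $m+1$ of these successors, say $\sigma_0, \ldots, \sigma_m$, that all carry the same value $c_{\sigma_i} = c$. I then set
\[
S := \{\emptyset\} \cup \bigcup_{i \leq m} \{\sigma_i \fr \tau : \tau \in S_{\sigma_i}\}.
\]
The root $\emptyset$ has exactly $m+1$ immediate successors in $S$, while every other internal node of $S$ inherits its $(m+1)$-many immediate successors from the corresponding $S_{\sigma_i}$; hence $S$ is $(m+1)$-fat. The leaves of $S$ are precisely the strings $\sigma_i \fr \tau$ with $\tau$ a leaf of $S_{\sigma_i}$, and each such leaf is assigned color $c$ by $f$.

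This is a straightforward pigeonhole-plus-induction argument, and there is essentially no obstacle. The only minor bookkeeping concern is that the subtrees $T_\sigma$ really do satisfy the hypotheses (uniform leaf length, $(m\ell+1)$-fatness), and that the coloring on their leaves is naturally inherited from $f$; both points are obvious from the construction.
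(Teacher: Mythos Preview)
Your proof is correct. The paper does not prove Fact~\ref{fact:Cenzer-Hinman} directly (it is cited from Cenzer--Hinman), but it proves the generalization in Lemma~\ref{lem:fat-tree} by the same induction-on-height-plus-pigeonhole idea; the only cosmetic difference is that the paper peels off the \emph{bottom} level (coloring the parents of the leaves by pigeonhole and then applying the induction hypothesis to the truncated tree), whereas you peel off the \emph{top} level (applying the induction hypothesis to each subtree below the root and then pigeonholing the root's children).
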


\begin{prop}\label{prop:llpo-error}
${\tt LLPO}_{1/\ell}\not\leqLT {\tt Error}_{1/\ell+2}$.
\end{prop}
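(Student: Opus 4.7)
The plan is to assume for contradiction that $(\tau\mid\eta)$ is a winning Arthur-Nimue strategy for ${\tt LLPO}_{1/\ell}\leqLT{\tt Error}_{1/\ell+2}$ and to apply the recursion theorem relativized to $\eta$ in order to produce a first move $e_0$ on which the strategy visibly fails. To set up the contradiction, for each candidate $e\in\N$ I would consider the game tree $T^\eta_{e,\tau}$ whose nodes are Merlin's response sequences in $\mathfrak{G}({\tt LLPO}_{1/\ell},{\tt Error}_{1/\ell+2})$ when Merlin opens with $e$, Arthur follows $\tau$, and Nimue follows $\eta$. Because the public domain of ${\tt Error}_{1/\ell+2}$ is a singleton, each internal node is an Arthur-query node at which Nimue's secret input is a single forbidden value $a\in\{0,\ldots,\ell+1\}$; so Merlin has exactly $\ell+1$ valid replies at each internal node, the tree is $(\ell+1)$-fat, it is finite whenever $(\tau\mid\eta)$ wins on $e$ (by K\"onig's lemma, since every valid branch terminates), and each leaf carries Arthur's terminating announcement as its label.

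Next I would pick a canonical Merlin play in $T^\eta_{e,\tau}$ --- for instance, always reply with the smallest element of $\{0,\ldots,\ell+1\}\setminus\{a_n\}$ --- and define $h(e)$ to be the label at the leaf this play reaches (with an arbitrary default value if the simulation fails to terminate). Since $\tau$ is computable and both the tree and Arthur's announcements use $e$ only as an integer input to $\tau$ (not as an index into the behaviour of $\varphi_e$), the simulation of this canonical play is a procedure computable from the oracle $\eta$, so $h$ is a function of $e$ computable in $\eta$. The point is that the canonical play is a valid Merlin play against $\eta$, so whenever $(\tau\mid\eta)$ wins on $e$ the winning hypothesis forces Arthur's announcement --- namely $h(e)$ --- to lie in ${\tt LLPO}_{1/\ell}(e)$, and in particular $\varphi_e(h(e))\uparrow$.

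The final step is to apply the recursion theorem relativized to $\eta$ to obtain an index $e_0$ with $\varphi_{e_0}(j)\downarrow$ iff $j=h(e_0)$. Then $|\{j<\ell:\varphi_{e_0}(j)\downarrow\}|\leq 1$, which places $e_0$ in ${\rm dom}({\tt LLPO}_{1/\ell})$; so the winning hypothesis applies on $e_0$, the canonical play from $e_0$ terminates with announcement $h(e_0)$, and by the previous paragraph $\varphi_{e_0}(h(e_0))\uparrow$, contradicting the defining property $\varphi_{e_0}(h(e_0))\downarrow$. The obstacle I anticipate is the intertwined self-reference among $e$, the game tree it generates, and the label $h(e)$ --- precisely what the recursion theorem is built to resolve --- together with the non-computability of $\eta$; this last point is benign, because Merlin is computationally unrestricted and the recursion theorem still produces a natural-number index $e_0$ whose partial function $\varphi_{e_0}$ is a genuine element of the standard enumeration, which is all that the membership $e_0\in{\rm dom}({\tt LLPO}_{1/\ell})$ requires. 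One could repackage the same argument more symmetrically by defining $h(e)$ as the common label of a $2$-fat monochromatic subtree of $T^\eta_{e,\tau}$ extracted via Fact \ref{fact:Cenzer-Hinman} (or its refinement Lemma \ref{lem:fat-tree}); this removes the asymmetry of singling out a canonical play but is otherwise cosmetic.
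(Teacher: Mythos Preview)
There is a genuine gap in the application of the recursion theorem. You want an index $e_0$ with $\varphi_{e_0}(j)\downarrow\iff j=h(e_0)$, where $h$ is only $\eta$-computable because your canonical Merlin play needs to read \Nim's moves. But the recursion theorem \emph{relativized to $\eta$} delivers a fixed point in the relativized enumeration, i.e.\ an $e_0$ with $\varphi_{e_0}^{\eta}=\varphi_{g(e_0)}^{\eta}$; it says nothing about the unrelativized $\varphi_{e_0}$, and ${\tt LLPO}_{1/\ell}$ is defined using $\varphi_{e_0}$, not $\varphi_{e_0}^{\eta}$. Your defense (``$e_0$ is still a natural-number index and $\varphi_{e_0}$ is in the standard enumeration'') is beside the point: the question is not whether $\varphi_{e_0}$ exists, but whether it has the property $\varphi_{e_0}(j)\downarrow\iff j=h(e_0)$. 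In general no such index exists for a non-computable $h$: e.g.\ take $h(e)=1$ if $\varphi_e(1)\uparrow$ and $h(e)=0$ otherwise (this is $\emptyset'$-computable); then $\varphi_{e_0}(1)\downarrow\iff 1=h(e_0)\iff\varphi_{e_0}(1)\uparrow$, a contradiction. Your alternative of defining $h(e)$ via a $2$-fat monochromatic subtree of $T^{\eta}_{e,\tau}$ still depends on $\eta$ and has the same defect.

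The paper's proof sidesteps this by making the self-referential construction purely computable. The key observation is that \Art's strategy $\tau$ alone (which \emph{is} computable, since \Ar cannot see \Nim's moves) determines the labeling $\Phi_{\tau}=\varphi_{d(e)}$ on $(\ell+2)^{<\N}$. The algorithm $r(e)$ then brute-force searches for a $2$-fat finite subtree of the \emph{full} tree $(\ell+2)^{<\N}$ (not of $T_{\eta}$) on whose leaves $\varphi_{d(e)}$ is defined and constant; this search is computable, and its termination is guaranteed by the existence of such a subtree inside $T_{\eta}$ (Fact~\ref{fact:Cenzer-Hinman}). Now the ordinary Kleene recursion theorem applies. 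Correctness then uses $2$-fatness in a second way: whichever $2$-fat tree $S$ the search finds, and whatever $\eta$ is, \Ni forbids only one of the $\geq 2$ successors at each node of $S$, so $S$ always contains a path that is a legal \Mer play against $(\tau\mid\eta)$; the winning hypothesis forces the constant label on that path to lie in ${\tt LLPO}_{1/\ell}(e)$. Thus the trick you called ``cosmetic'' --- searching in $(\ell+2)^{<\N}$ rather than in $T^{\eta}_{e,\tau}$ --- is precisely what decouples the fixed-point construction from $\eta$ and makes the argument go through.
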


\begin{proof}
The proof is by a typical recursion trick; see \cite{Kih20}.
Suppose for the sake of contradiction that there exists a winning \Art-\Ni strategy $(\tau\mid\eta)$ witnessing ${\tt LLPO}_{1/\ell}\leqLT {\tt Error}_{1/\ell+2}$.
Except for the first move $e$, \Mer's move is always a number $j<\ell+2$, which yields the tree $(\ell+2)^{<\N }$ of all possible moves by \Mer.
Fix $e$, and then \Art's strategy $\tau$ yields a partial computable function $\Phi_\tau\pcolon(\ell+2)^{<\N }\to\N $, where $\Phi_\tau(\sigma)\downarrow=u$ if and only if, after reading \Mer's moves $\sigma$, \Art's strategy $\tau$ declares termination with $u$.
Moreover, as \Ni makes a secret input $A\subseteq(\ell+2)$ for ${\tt Error}_{1/\ell+2}$ at each round, \Nim's strategy $\eta$ restricts \Mer's possible moves to an $(\ell+1)$-fat finite subtree $T_\eta$ of $(\ell+2)^{<\N }$, where after \Ar declares termination, \Ni makes no further moves; hence if $\sigma\in T_\eta$ and $\Phi_\tau(\sigma)\downarrow$ then $\sigma$ has to be a leaf of $T_\eta$.
On the other hand, if \Ar does not declare termination, then \Ni makes the next move, so $\Phi_\tau$ restricted to the leaves $L_\eta$ of $T_\eta$ yields a total function from $L_\eta$ to $\ell$.
One can assume that every leaf of $T_\eta$ has the same length:
Otherwise, let $t$ be the length of a longest node of $T_\eta$, and for each leaf $\rho$ of $T_\eta$ of length $s<t$, place a full $(\ell+1)$-branching tree of height $t-s$ on the leaf $\rho$.
Then, for a leaf $\rho$ of the resulting tree $T^\ast_\eta$, define $\Phi^\ast_\tau(\rho)$ as the value $\Phi_\tau(\rho^\ast)$ for the unique initial segment $\rho^\ast$ of $\rho$ such that $\rho^\ast\in L_\eta$.
Then replace $T_\eta$ with $T_\eta^\ast$ if necessary.

Since $\Phi_\tau$ is $\ell$-valued, and $T_\eta$ is $(\ell+1)$-fat, by Fact \ref{fact:Cenzer-Hinman}, there exists a $2$-fat subtree $S$ of $T_\eta$ of the same height such that $\Phi_\tau$ is constant on the leaves of $S$.
Recall that $\Phi_\tau$ depends on $e$, so there exists a computable function $d$ such that $\Phi_\tau=\varphi_{d(e)}$.
Now, we construct an algorithm $r(e)$ as follows:
By brute-force, \Me searches for a $2$-fat finite subtree $S$ of $(\ell+2)^{<\N }$ such that $\varphi_{d(e)}$ is total and constant on the leaves of $S$.
Let $j<\ell$ be the unique value of $\varphi_{d(e)}$ on the leaves of $S$.
Then, we declare that $\varphi_{r(e)}(j)$ halts, and $\varphi_{r(e)}(k)$ never halts for $k\not=j$.

Since \Nim's move reduces the number of possible moves for \Me by at most one at each round, and $S$ is $2$-fat, $S$ contains \Mer's correct moves in a play following $(\tau\mid\eta)$ as a path $\rho\in S$.
Since $(\tau\mid\eta)$ is winning, $\varphi_{d(e)}(\rho)=\Phi_\tau(\rho)=j\in{\tt LLPO}_{1/\ell}(e)$ since $e$ is \Mer's first move.
This means that $\varphi_e(j)\uparrow$.
However, by the Kleene recursion theorem (see e.g.~\cite[Theorem 4.1.1]{Cooper} or \cite[Theorem II.2.10]{OdiBook}), there exists $e$ such that $\varphi_e=\varphi_{r(e)}$, and by our definition, we have $\varphi_{r(e)}\downarrow$.
This is a contradiction.
\end{proof}

This also shows that ${\tt LLPO}^{\emptyset'}_{1/\ell}\not\leqLT {\tt Error}_{1/\ell+1}$.

\begin{definition}\label{def:mul-plus-mono}
For a partial multifunction $f$ and a basic bilayer function $g$, we define a bilayer function $(f\mid g)$ as follows:
\[(f\mid g)(n\mid c)=f(n)\times g(\ast\mid c).\]
\end{definition}

A similar argument as above also shows the following:

\begin{prop}
${\tt Error}_{1/\ell}\not\leqLT (f\mid{\tt Error}_{1/\ell+1})$ for any partial multifunction $f$.
\end{prop}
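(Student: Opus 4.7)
The plan is a synchronization argument in the spirit of Proposition~\ref{prop:llpo-error}. Suppose for contradiction that $(\tau\mid\eta)$ is a winning \Art-\Ni strategy for $\mathfrak{G}({\tt Error}_{1/\ell},(f\mid{\tt Error}_{1/\ell+1}))$, and consider $\ell$ parallel instances of the game, indexed by $a<\ell$, in which \Mer's first move is $(\ast\mid\{a\})$. I shall define \Mer's subsequent responses so that the visible sequence $x_0,x_1,x_2,\dots$ of \Mer's moves is the same in every copy; since \Art's computable strategy $\tau$ depends only on that sequence, his queries and his eventual termination declaration will then coincide across the $\ell$ copies.

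The construction proceeds by induction on rounds. Assume the first $n$ responses of \Mer are identical across the $\ell$ games, so that \Art's next move $y_n$ is the same in every copy. If $y_n$ declares termination with some value $u$, then the assumption that $(\tau\mid\eta)$ is winning would require $u\in{\tt Error}_{1/\ell}(\ast\mid\{a\})=\{0,\dots,\ell-1\}\setminus\{a\}$ for every $a<\ell$, forcing $u\notin\{0,\dots,\ell-1\}$; but a valid output must lie in $\{0,\dots,\ell-1\}$, a contradiction. Otherwise $y_n$ is a query $u_n$ to $(f\mid{\tt Error}_{1/\ell+1})$, and in copy $a$ the player \Ni chooses a singleton $z_n^a\subseteq\{0,\dots,\ell\}$, possibly depending on $a$. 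The key pigeonhole is that $|\bigcup_{a<\ell}z_n^a|\le\ell<\ell+1$, so there exists $j_n\in\{0,\dots,\ell\}\setminus\bigcup_{a<\ell}z_n^a$. Picking any $y_n^\star\in f(u_n)$ and letting \Me play $x_{n+1}=\langle y_n^\star,j_n\rangle$ in every copy yields a valid response in $f(u_n)\times(\{0,\dots,\ell\}\setminus\{z_n^a\})$ in each game $a$, so the inductive hypothesis is preserved.

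Since $(\tau\mid\eta)$ is winning against every \Mer-strategy, in particular against this synchronized one, \Art must terminate at some round, and the termination case of the induction then produces the promised contradiction. The combinatorial content sits in the single inequality $\ell<\ell+1$: \Ni has only $\ell$ singletons to hide across the $\ell$ parallel copies, while the ${\tt Error}_{1/\ell+1}$-response still offers $\ell+1$ values, so a common legal second coordinate always exists. The multifunction $f$ is inert in the argument---any element $y^\star\in f(u_n)$ serves---which is precisely what allows the conclusion to hold uniformly over all~$f$.
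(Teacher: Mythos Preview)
Your argument is correct and, in fact, cleaner than the paper's. The paper proceeds by first replacing $f$ with a single-valued choice function, then encoding the play as a partial $f$-computable function $\Phi_\tau^f$ on the tree $(\ell+1)^{<\N}$ of possible second coordinates, and finally invoking the fat-tree combinatorics of Fact~\ref{fact:Cenzer-Hinman} to extract a $2$-fat subtree on whose leaves $\Phi_\tau^f$ is constant; since \Nim removes at most one option per round, \Me can stay inside that subtree, forcing the constant output regardless of the secret input $c$. Your synchronization argument bypasses both the single-valued reduction and the fat-tree lemma: running the $\ell$ games in parallel and exploiting $|\bigcup_{a<\ell}z_n^a|\le\ell<\ell+1$ directly furnishes a common legal response at each round, so \Art's visible transcript---and hence his output---is literally identical across all secret inputs. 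The two proofs extract the same contradiction (a single output valid for every $\{a\}$), but yours gets there with only the one-line pigeonhole instead of the tree machinery. One minor slip: you write $\{0,\dots,\ell\}\setminus\{z_n^a\}$ where you mean $\{0,\dots,\ell\}\setminus z_n^a$, since $z_n^a$ is already the singleton set.
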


\begin{proof}
Note that any partial multifunction $f$ is one-query Turing reducible to a single-valued function.
This is because any choice function for $f$ refines $f$.
Thus, without loss of generality, one can assume that $f$ is a single-valued function.

Suppose for the sake of contradiction that there exists there exists a winning \Art-\Ni strategy $(\tau\mid\eta)$.
Except for the first move $c$, the second coordinate of \Mer's move is always a number $j<\ell+1$, which yields the tree $(\ell+1)^{<\N }$ of all possible moves.
Given \Art's moves, the first coordinates of the corresponding moves by \Me are computable in $f$.
Therefore, \Art's strategy $\tau$ and the corresponding responses by \Me yield a partial $f$-computable function $\Phi^f_\tau\pcolon(\ell+1)^{<\N }\to\N $, where $\Phi^f_\tau(\sigma)\downarrow=u$ if and only if, after reading \Mer's moves whose second coordinates are $\sigma$, \Art's strategy $\tau$ declares termination with $u$.
Moreover, \Nim's strategy $\eta$ restricts second coordinates of \Mer's possible moves to an finite subtree $T_\eta$ of $(\ell+1)^{<\N }$.
As in the proof of Proposition \ref{prop:llpo-error}, one can assume that every leaf of $T_\eta$ has the same length $t$.
Then, $\Phi_\tau^f$ restricted to $T_\eta\cap (\ell+1)^t$ yields a total function from $T_\eta\cap (\ell+1)^t$ to $\ell$.
By considering $\min\{\Phi^f_\tau(\sigma),\ell-1\}$, one can assume that $\Phi^f_\tau$ is a partial $\ell$-valued function even if we consider an input $\sigma\not\in T_\eta\cap (\ell+1)^t$.
By Fact \ref{fact:Cenzer-Hinman} applied to any totalization of $\Phi^f_\tau$, one can see that there exists a $2$-fat subtree $S$ of $(\ell+1)^t$ of the same height such that $\Phi^f_\tau$ takes at most one value (or undefined) on the leaves of $S$ (where $\Phi_\tau^f$ can be partial).

Let $j<\ell$ be the only possible value of $\Phi_\tau^f$ on the leaves of $S$.
Note that this value only depends on \Art's strategy $\tau$; hence it is independent of \Mer's first move $c$.
Since \Nim's move reduces the number of possible moves for \Me by at most one at each round, and $S$ is $2$-fat, $S$ contains the second coordinates of \Mer's correct moves in a play following $(\tau\mid\eta)$ as a path $\rho\in S$.
Since $(\tau\mid\eta)$ is winning, $\Phi_\tau^f(\rho)$ is defined, and $j$ is the only possible value.
Hence $j\in{\tt Error}_{1/\ell}(c)$ since $c$ is \Mer's first move.
However, this value $j$ is independent of $c$, which is a contradiction.
\end{proof}

One can apply combinatorial techniques developed in \cite{CH08} to prove the following:

\begin{theorem}\label{thm:error-separation}
${\tt Error}_{m/k}\eqLT {\tt Error}_{1/\ell}$, where $\ell=\lceil\frac{k}{m}\rceil$.
\end{theorem}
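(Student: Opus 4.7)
The plan is to establish the equivalence by proving each reduction direction separately. For the easy direction ${\tt Error}_{1/\ell}\leqoLT{\tt Error}_{m/k}$, I will give a one-query reduction: since $\ell m\geq k$, one can cover $\{0,\dots,k-1\}$ by $\ell$ subsets $G_0,\dots,G_{\ell-1}$ each of size exactly $m$. The secret inner reduction sends the advice $c$ to $G_c$ (treated as the secret wrong-set for ${\tt Error}_{m/k}$), and the outer reduction sends a response $b\notin G_c$ back to $\min\{c':b\in G_{c'}\}$, which is well-defined by the covering and necessarily differs from $c$ (otherwise $b\in G_c$, contradicting the validity of the oracle response).

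For the converse ${\tt Error}_{m/k}\leqLT{\tt Error}_{1/\ell}$, I will construct a winning Arthur-Nimue strategy in $\mathfrak{G}({\tt Error}_{m/k},{\tt Error}_{1/\ell})$. Arthur will make $t$ queries to ${\tt Error}_{1/\ell}$ for an appropriate $t$, obtain a transcript $\vec{x}=(x_1,\dots,x_t)\in\{0,\dots,\ell-1\}^t$, and terminate with output $f(\vec x)\in\{0,\dots,k-1\}$ for a fixed computable $f$. For every Merlin secret $A$ of size $m$, Nimue must select her advice values $c_i$ so that every Merlin play consistent with her strategy (each $x_i\neq c_i$) satisfies $f(\vec x)\notin A$; equivalently, for each $A$ there must exist an $(\ell-1)$-fat subtree $T_A$ of the $\ell$-ary tree of depth $t$ whose leaves are all mapped outside $A$ by $f$.

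The task thereby reduces to a combinatorial one: construct a computable $f$ (and choose $t$ large enough) such that for every $m$-subset $A\subseteq\{0,\dots,k-1\}$ some $(\ell-1)$-fat subtree of depth $t$ avoids $f^{-1}(A)$. A natural candidate is a near-uniform surjection, for which $|f^{-1}(A)|\approx m\ell^t/k\approx \ell^{t-1}$ when $k\approx m\ell$; this count lies below the minimum blocking size for $(\ell-1)$-fat subtrees in an $\ell$-ary tree of the relevant depth. The existence will follow by induction on $t$ using a Cenzer-Hinman style variant of Fact~\ref{fact:Cenzer-Hinman} specialized to avoidance of a given label-set, as provided by Lemma~\ref{lem:fat-tree}.

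The main obstacle will be this combinatorial verification: not merely that $|f^{-1}(A)|$ is small in count, but that its arrangement within the $\ell$-ary tree leaves room for an $(\ell-1)$-fat subtree uniformly across all $m$-subsets $A$. Constructing an explicit $f$ with this avoidance property --- or equivalently, invoking Lemma~\ref{lem:fat-tree} with the right parameters to extract the required subtree for each $A$ --- is the crux of the proof.
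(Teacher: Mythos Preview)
Your easy direction is correct and is essentially what the paper does (it just cites Cenzer--Hinman). Your reformulation of the hard direction is also correct: by finite branching, any winning Arthur--Nimue strategy has bounded depth $t$, so ${\tt Error}_{m/k}\leqLT{\tt Error}_{1/\ell}$ is equivalent to the existence of some $f\colon\ell^t\to k$ such that for every $m$-subset $A\subseteq k$ an $(\ell{-}1)$-fat depth-$t$ subtree avoids $f^{-1}(A)$. Since the theorem is true, that combinatorial statement is true; the question is whether your proposed argument proves it.

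It does not. First, the counting heuristic is wrong: the minimum size of a leaf-set blocking every $(\ell{-}1)$-fat subtree of the $\ell$-ary tree of depth $t$ is $2^t$ (a set blocks iff at least two root-children carry blocking restrictions, so $b_t=2b_{t-1}$), whereas your count $m\ell^t/k\approx\ell^{t-1}$ exceeds $2^t$ once $\ell\geq 3$ and $t$ is even moderately large. Second, the construction is genuinely delicate at small depths too: for $m=2$, $k=5$, $\ell=3$ there is \emph{no} $f\colon 3^2\to 5$ with the avoidance property (it would require three $3$-subsets of a $5$-set whose pair-sets are pairwise disjoint, which a short case check rules out). Third, Lemma~\ref{lem:fat-tree} points the wrong way: it guarantees a constant-label subtree for \emph{any} labelling, which is a tool for defeating Arthur (as in Propositions~\ref{prop:llpo-error} and~\ref{prop:denerror-hyp}), not for building an $f$ that helps him. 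So the crux you identify is left unresolved.

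The paper's argument is structurally different. It introduces an auxiliary family ${\tt Error}_{m/k;n}$ with $n$ ``hard blocks'' (a hard block breaks only if hit all $m$ times), proves a recursion ${\tt Error}_{m/k;n}\leqLT{\tt Error}_{m^\ast/k-m+1;n+1}\sqcup{\tt Error}_{m-1/k;n}$ by having Arthur try every way of consolidating $m$ normal blocks into one hard block, and then shows by induction on $(m,k)$ that ${\tt Error}_{m/k;n}\leqLT{\tt Error}_{1/q}$ with $q=\lceil(k-n)/m\rceil+n$. Setting $n=0$ yields the theorem. Unwinding this \emph{does} produce an $f$ of the kind you want, but the resulting depth and structure come from this hard-block recursion, not from a uniform surjection plus a fat-tree lemma.
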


This solves all problems asked in Lee-van Oosten \cite[Open problems in pages 876--877]{LvO}.

\begin{proof}[Proof of Theorem \ref{thm:error-separation}]
As in Cenzer-Hinman \cite[Proposition 2.4]{CH08}, one can easily see that $\lceil\frac{k}{m}\rceil\leq\ell$ implies ${\tt Error}_{1/\ell}\leqoLT {\tt Error}_{m/k}$.
Now, let us think of ${\tt Error}_{m/k}$ as a problem of choosing a surviving block, where there are $k$ blocks and one may secretly destroy at most $m$ of them.
As a variant of this problem, consider ${\tt Error}_{m/k;n}$, where there are $k$ blocks as above, but $n$ of them are hard blocks.
One can secretly hit blocks $m$ times, and while a normal block will break in one hit, a hard block will only break if we hit it all $m$ times.
The information about which blocks are hard is given as a public input $\langle a_i\rangle_{i<n}$, and the information on which blocks to hit is given as a secret input $\langle c_j\rangle_{j<m}$, where $a_i,c_j<k$ for any $i<n$ and $j<m$.
Formally,
\[
a\not\in{\tt Error}_{m/k;n}(\langle a_i\rangle_{i<n}\mid\langle c_j\rangle_{j<m})
\iff
\begin{cases}
a\in\{c_j\}_{j<m}&\mbox{ if }a\not\in\{a_i\}_{i<n},\\
\{a\}=\{c_j\}_{j<m}&\mbox{ if }a\in\{a_i\}_{i<n}.
\end{cases}
\]

This notion is an analogue of a sequence of type $(m,n)$ in \cite[Theorem 2.6]{CH08}.
Now assume that there are $k$ blocks, of which $n$ are hard blocks.
Consider an operation of consolidating $m$ of the normal blocks into a single hard block.
Then we now have $k-m+1$ blocks, since we have consolidated $m$ blocks into one.
The number of normal blocks remaining is $\ell=k-n-m$, and the number of hard blocks is $n+1$.
One can secretly hit a block $m$ times, but assuming that a hard block will always break if one hits it all the times does not change the difficulty of the problem, so one can assume that the number of times we hit it is $m^\ast=\min\{\ell,m\}$.
This consolidating operation transforms an instance of ${\tt Error}_{m/k;n}$ into an instance of ${\tt Error}_{m^\ast/k-m+1;n+1}$.

The join $f\sqcup g$ of bilayer functions $f$ and $g$ is defined as follows: 
\[
\begin{cases}
(f\sqcup g)(\langle 0,n\rangle\mid c)=f(n\mid c),\\
(f\sqcup g)(\langle 1,n\rangle\mid c)=g(n\mid c).
\end{cases}
\]

The next claim corresponds to the formula (6) in Cenzer-Hinman \cite[Theorem 2.6]{CH08}.

\begin{claim}
${\tt Error}_{m/k;n}\leqLT {\tt Error}_{m^\ast/k-m+1;n+1}\sqcup {\tt Error}_{m-1/k;n}$.
\end{claim}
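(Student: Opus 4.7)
The plan is to construct an explicit winning \Art-\Ni strategy. The central idea is that \emph{consolidating} an $m$-element subset of the original normal blocks into a single hard block transforms the problem into an instance of the left summand, while the right summand ${\tt Error}_{m-1/k;n}$ acts as a fallback for the case in which \Me concentrates all hits on a single hard block.

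Write $A=\{a_i\}_{i<n}$ and $N=\{0,\ldots,k-1\}\setminus A$ for the normal-block labels. \Art's strategy iterates over all $m$-element subsets $S\subseteq N$; for each such $S$, \Ar queries the left summand ${\tt Error}_{m^\ast/k-m+1;n+1}$ with public input $A\cup\{\ast_S\}$, where $\ast_S$ is a fresh label representing the consolidation of $S$ into a single hard block. \Ni, who sees \Mer's secret $C=\langle c_j\rangle_{j<m}$, plays for each such query as follows. If $\{c_j:j<m\}=\{a^\ast\}$ for a single $a^\ast\in A$ (Case I), she places all $m^\ast$ hits on $a^\ast$; otherwise (Case II), she places $|R_1^{(S)}|$ hits covering $R_1^{(S)}:=\{c_j:c_j\in N\setminus S\}$ and distributes the remaining $m^\ast-|R_1^{(S)}|$ hits on $\ast_S$. (This is well-defined because $|R_1^{(S)}|\leq\min\{m,\ell\}=m^\ast$.) A direct check shows that if the oracle response $y_S$ differs from $\ast_S$, then $y_S\in A\cup(N\setminus S)$ and $y_S$ avoids the original destroyed set, so \Ar can safely return $y_S$. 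Moreover, in Case II, whenever $S\supseteq C_{\text{norm}}$---which exists because $|C_{\text{norm}}|\leq m$---\Nim's strategy places all $m^\ast$ hits on $\ast_S$, forcing $\ast_S$ destroyed and hence $y_S\neq \ast_S$; therefore at least one iteration is guaranteed to succeed in Case II.

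If instead every $y_S=\ast_S$---which, by the analysis above, can happen only in Case I---\Ar queries the right summand ${\tt Error}_{m-1/k;n}$ with public input $A$, and \Ni places all $m-1$ hits on $a^\ast$; the oracle is then forced to return some $y'\neq a^\ast$, which is a survivor of the original problem. The main obstacle is the careful combinatorial bookkeeping for the case analysis, together with the boundary $\ell=0$, where $m^\ast=0$ and the left oracle is degenerate. In that case, \Ar must issue several parallel queries to the right oracle, with \Ni adaptively choosing which element of $C_{\text{norm}}$ to miss in each query, so that across the collection of responses at least one provably lies outside $D_{\text{orig}}$; the verification is a direct adaptation of the combinatorial argument of Cenzer--Hinman \cite[Theorem 2.6]{CH08}.
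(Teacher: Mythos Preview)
For the principal case $m^\ast\geq 1$ (equivalently $\ell\geq 1$) your argument is correct and follows the same route as the paper: enumerate all $m$-element subsets $S$ of the normal blocks, query the left summand on the consolidated instance, output any response other than $\ast_S$, and if every response equals $\ast_S$ make a single call to the right summand. Your \Ni strategy differs only cosmetically from the paper's---the paper reroutes each $c_j\in S$ to $\ast_S$ and keeps $c_j\notin S$ unchanged (so it plays $m$ hits and $\ast_S$ is destroyed exactly when $\{c_j\}\subseteq S$), whereas you split into Cases I/II and play $m^\ast$ hits---and consequently your fallthrough (Case I only) is narrower than the paper's (some $c_j$ is hard). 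Either way one call to ${\tt Error}_{m-1/k;n}$ finishes the job.

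Your sketch for the boundary $\ell=0$ cannot be completed, because the claim as stated is false there. With $m^\ast=0$ the left summand is trivial and the claim collapses to ${\tt Error}_{m/k;n}\leqLT{\tt Error}_{m-1/k;n}$; but taking $m=2$, $k=3$, $n=1$ (hard block at position $2$), the secret map $0\mapsto(2,2)$, $1\mapsto(0,1)$ with outer reduction $0,1\mapsto 1$ and $2\mapsto 0$ witnesses ${\tt Error}_{1/2}\leqoLT{\tt Error}_{2/3;1}$, while ${\tt Error}_{1/3;1}\eqLT{\tt Error}_{1/3}$ and ${\tt Error}_{1/2}\not\leqLT{\tt Error}_{1/3}$ by the separation results, so ${\tt Error}_{2/3;1}\not\leqLT{\tt Error}_{1/3;1}$. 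This is in fact a gap in the paper's own formulation: the informal passage ``one can assume that the number of times we hit it is $m^\ast$'' fails at $\ell=0$, and indeed the paper's \Ni actually plays a length-$m$ sequence. The fix, for both you and the paper, is to keep $m$ rather than $m^\ast$ in the left summand; with that correction your main-case argument already covers $\ell=0$, and the induction in the subsequent claim goes through verbatim since $\lceil\ell/m\rceil$ replaces $\lceil\ell/m^\ast\rceil$ harmlessly.
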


\begin{proof}
Assume that an input $(\langle a_i\rangle_{i<n}\mid\langle c_j\rangle_{j<m})$ for ${\tt Error}_{m/k;n}$ is given.
There are only a finite number of patterns of consolidating $m$ normal blocks, and the location information of the $n$ hard blocks is given as a public input.
Therefore, by brute-force, \Ar can try all patterns of consolidating $m$ normal blocks.
If the number of patterns is $s$, at the first $s$ rounds, \Ar and \Ni make queries to ${\tt Error}_{m^\ast/k-m+1;n+1}$.
For such a round, \Ar chooses $m$ normal blocks $\langle b_i\rangle_{i<m}$, i.e., $\{b_i\}_{i<m}\cap\{a_i\}_{i<n}=\emptyset$, and consolidate them into a single hard block $b$.
Then \Ni hits the new blocks according to the original secret input $\langle c_j\rangle_{j<m}$; that is, if $c_j\not\in\{b_i\}_{i<m}$ then $c_j'=c_j$, and if $c_j\in\{b_i\}_{i<m}$ then $c_j'=b$.
Then, \Nim's next move is given by $\langle c_j'\rangle_{j<m}$.

If \Mer's response $u$ is not $b$ at some round, then $u$ is a solution to the original problem, so \Ar declares termination with $u$.
If \Mer's response $u$ is the new consolidated hard block $b$ at each round, then the original secret input $\langle c_j\rangle_{j<m}$ does not hit $m$ normal blocks.
This is because if $\langle c_j\rangle_{j<m}$ hits $m$ normal blocks then $\Ar$ consolidates these $m$ blocks into a single hard block $b$ at some round, and so \Ni hits the new hard block $m$ times.
This means that \Ni breaks $b$, so $b$ is not acceptable as \Mer's response.
Hence, the original secret input $\langle c_j\rangle_{j<m}$ hits at most $m-1$ normal blocks, so $(\langle a_i\rangle_{i<n}\mid\langle c_j\rangle_{j<m})$ can also be thought of as an input of ${\tt Error}_{m-1/k;n}$.
Thus, at round $s+1$, \Ar use $\langle a_i\rangle_{i<n}$ and \Ni use $\langle c_j\rangle_{j<m}$ as a query to ${\tt Error}_{m-1/k;n}$, and then \Mer's response $u$ must be a solution to $(\langle a_i\rangle_{i<n}\mid\langle c_j\rangle_{j<m})$.
Then, \Ar declares termination with $u$.
This is a winning \Art-\Ni strategy witnessing ${\tt Error}_{m/k;n}\leqLT {\tt Error}_{m^\ast/k-m+1;n+1}\sqcup {\tt Error}_{m-1/k;n}$.
\end{proof}

The next claim corresponds to the formula (11) in Cenzer-Hinman \cite[Theorem 2.6]{CH08}.

\begin{claim}
If $q=\lceil\frac{k-n}{m}\rceil+n$ then ${\tt Error}_{m/k;n}\leqLT {\tt Error}_{1/q}$.
\end{claim}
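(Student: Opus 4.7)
The plan is to prove the claim by induction on the pair $(m,k-n)$ in lexicographic order, with outer induction on $m$ and inner induction on $k-n$ within each fixed $m$. The base case $m=1$ is immediate since with only one hit every block behaves like a normal block, making ${\tt Error}_{1/k;n}$ coincide with ${\tt Error}_{1/k}$ for which $q=k$; the base case $k-n=0$ is also direct because destroying a hard block requires all $m$ hits to concentrate on it, so at most one of the $k$ hard blocks can be destroyed, giving ${\tt Error}_{m/k;k}\leqoLT{\tt Error}_{1/k}$ with $q=k$.

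For the main inductive step with $m\geq 2$ and $k-n\geq m$, I will apply the previous claim to obtain
\[{\tt Error}_{m/k;n}\leqLT {\tt Error}_{m^*/k-m+1;n+1}\sqcup{\tt Error}_{m-1/k;n},\]
with $m^*=\min(m,k-n-m)$, and show each summand reduces to ${\tt Error}_{1/q}$. For the first summand, $k'-n'=k-n-m<k-n$; a case split on whether $m^*=m$ (apply inner IH, same $m$ but smaller $k-n$), $m^*<m$ (apply outer IH, smaller $m$), or $m^*=0$ (trivial) lets me verify that the output parameter $q^*$ equals $q$. For the second summand, smaller $m$ permits outer IH, producing ${\tt Error}_{1/q_2}$ with $q_2=\lceil(k-n)/(m-1)\rceil+n\geq q$; I then invoke the easy one-query fact ${\tt Error}_{1/q_2}\leqoLT{\tt Error}_{1/q}$ whenever $q_2\geq q$, which Nimue witnesses by mapping her original forbidden index $d<q_2$ either to $d$ itself (if $d<q$) or to any fixed element of $\{0,\ldots,q-1\}$ (if $d\geq q$), so that Arthur forwards the oracle's response unchanged and it always differs from $d$.

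The delicate remaining case is $1\leq k-n<m$, where the consolidation trick of the previous claim is unavailable (there are not enough normal blocks to consolidate). Here $q=n+1$, and I will give a direct one-query reduction. Partition the $q$ options into the $n$ hard blocks $H_0,\ldots,H_{n-1}$ and a single ``supergroup'' $G$ containing all $k-n$ normal blocks. The key observation, exploiting $k-n<m$, is that if some hard $H_j$ is destroyed then all $m$ hits are concentrated on $H_j$, so no normal block is hit at all; otherwise no hard is destroyed. Nimue's secret is $j$ when some $H_j$ is destroyed and $n$ otherwise; Arthur outputs $H_i$ upon any response $i<n$ and outputs a canonical normal block upon response $n$. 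A short case split verifies that Arthur's output is always a surviving block. (The sub-case $n=0$ with $k<m$ is degenerate since $q=1$ makes ${\tt Error}_{1/1}$ vacuously forcing Merlin to violate the rule.)

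The main obstacle I expect is bookkeeping the $q$-values across the four sub-cases ($m^*=m$, $m^*<m$, $m^*=0$, and $k-n<m$) so that everything lines up with $q=\lceil(k-n)/m\rceil+n$, together with verifying the monotonicity ${\tt Error}_{1/q_2}\leqoLT{\tt Error}_{1/q}$ for $q_2\geq q$ cleanly within the secret-input framework; the combinatorial heavy lifting has already been done in the previous claim, so the remaining work is essentially parameter arithmetic plus a self-contained argument for the $k-n<m$ case.
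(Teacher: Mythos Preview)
Your approach is essentially the same as the paper's: induction via the previous (consolidation) claim, then showing each summand reduces to ${\tt Error}_{1/q}$ by checking the resulting $q$-parameters are at least $q$ and invoking the monotonicity ${\tt Error}_{1/q'}\leqoLT{\tt Error}_{1/q}$ for $q'\geq q$.

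Two minor differences are worth noting. First, the paper handles the first summand uniformly with the one-line estimate
\[
q_0=\left\lceil\frac{k-n-m}{m^*}\right\rceil+n+1\geq\left\lceil\frac{k-n-m}{m}\right\rceil+n+1=\left\lceil\frac{k-n}{m}\right\rceil+n=q,
\]
using only $m^*\leq m$, rather than your three-way case split on whether $m^*=m$, $0<m^*<m$, or $m^*=0$ (your computation that $q^*=q$ exactly in each sub-case is correct, just more work than needed). Second, your separate treatment of the regime $1\leq k-n<m$ is correct but not actually required: starting from any triple with $k-n\geq m$ (in particular from $n=0$, $k>m$, which is all that the enclosing theorem needs), both recursive branches preserve the invariant $k'-n'\geq m'$ or land in $m'\leq 1$. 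Indeed, for the consolidation branch $k'-n'=k-n-m\geq m^*$ by definition of $m^*$, and for the other branch $k'-n'=k-n\geq m>m-1$. So the consolidation step is always available throughout the induction, and the case you single out as ``delicate'' is never visited. The paper accordingly omits it.
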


\begin{proof}
If $m=1$ then $q=k$ and there is no difference between normal blocks and hard blocks; hence ${\tt Error}_{1/k;n}\leqLT {\tt Error}_{1/k}$.
We prove the claim by induction on $m$ and $k$.
By the induction hypothesis, if we put $q_0=\lceil\frac{(k-m+1)-(n+1)}{m^\ast}\rceil+n+1$ and $q_1=\lceil\frac{k-n}{m-1}\rceil+n$, then
\begin{gather*}
{\tt Error}_{m^\ast/k-m+1;n+1}\leqLT {\tt Error}_{1/q_0}\\
{\tt Error}_{m-1/k;n}\leqLT {\tt Error}_{1/q_1}
\end{gather*}

We clearly have $q_1\geq q$, and moreover
\[
q_0=\left\lceil\frac{(k-m+1)-(n+1)}{m^\ast}\right\rceil+n+1\geq \left\lceil\frac{(k-m-n)}{m}\right\rceil+n+1=\left\lceil\frac{(k-n)}{m}\right\rceil+n=q.
\]

Thus, $q_0,q_1\geq q$ and this implies that ${\tt Error}_{1/q_0},{\tt Error}_{1/q_1}\leqoLT {\tt Error}_{1/q}$.
Hence, by the previous claim,
\begin{multline*}
{\tt Error}_{m/k;n}\leqLT {\tt Error}_{m^\ast/k-m+1;n+1}\sqcup {\tt Error}_{m-1/k;n}\\
\leqLT {\tt Error}_{1/q_0}\sqcup{\tt Error}_{1/q_1}
\leqLT {\tt Error}_{1/q}\sqcup{\tt Error}_{1/q}\eqLT{\tt Error}_{1/q}.
\end{multline*}

This verifies the claim.
\end{proof}

In particular, if we put $n=0$, then we have ${\tt Error}_{m/k}\leqLT {\tt Error}_{1/\ell}$ as $\ell=\lceil\frac{k}{m}\rceil$.
This concludes the proof of Theorem \ref{thm:error-separation}.
\end{proof}

\subsection{Non-existence of a minimal topology}

We next consider the basic bilayer function ${\tt Error}_{m/\N }$ for $m\in\N$ defined by
\begin{align*}
{\rm dom}({\tt Error}_{m/\N })&=\{(\ast\mid A):A\subseteq\N\;\land\;|A|=m\},\\
{\tt Error}_{m/\N }(\ast\mid A)&=\N\setminus A
\end{align*}

In Lee-van Oosten \cite[Propositions 5.1 and 5.2]{LvO}, it is shown that ${\tt Error}_{m/\N }\loLT{\tt Error}_{m+1/\N }$, but ${\tt Error}_{1/\N }\eqLT{\tt Error}_{m/\N }$.
Interestingly, as shown in \cite[Proposition 5.5]{LvO}, ${\tt Error}_{1/\N }$ is the $\leqLT $-least basic bilayer function which is strictly $\geqLT $-above ${\tt Id}$.
However, we include bilayer functions, ${\tt Error}_{1/\N }$ is not the $\leqLT $-least one.
For instance, the basic bilayer function ${\tt Error}_{1/\N }$ is closely related to the notion called {\em all-or-counique choice} \cite{pauly-handbook}:
\[
{\rm dom}({\tt ACC})=\N,\qquad
{\tt ACC}(e)=
\begin{cases}
\N\setminus\{\varphi_e(e)\}&\mbox{ if $\varphi_e(e)\downarrow$}\\
\N&\mbox{ if $\varphi_e(e)\uparrow$}
\end{cases}
\]

One may relativize ${\tt ACC}$ by replacing $\varphi_e$ with $\varphi_e^\alpha$ for a given oracle $\alpha\in 2^\N$, and then the resulting function is denoted by ${\tt ACC}^\alpha$.
Again, we think of a partial multifunction as a bilayer function as in Example \ref{exa:pmv-is-bilayer}.
The following is obvious:

\begin{obs}\label{obs:acc-error}
${\tt Id}\lLT{\tt ACC}^\alpha\lLT{\tt Error}_{1/\N }$ for any oracle $\alpha$.
\end{obs}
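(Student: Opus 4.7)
The plan is to establish the chain by verifying four sub-claims: two reductions $\mathtt{Id}\leqLT\mathtt{ACC}^\alpha\leqoLT\mathtt{Error}_{1/\N}$ and two strictness statements $\mathtt{ACC}^\alpha\not\leqLT\mathtt{Id}$ and $\mathtt{Error}_{1/\N}\not\leqLT\mathtt{ACC}^\alpha$. The positive reductions are routine. Since the identity problem is trivially solvable without any oracle call, $\mathtt{Id}$ is the $\leqLT$-minimum, which already yields $\mathtt{Id}\leqLT\mathtt{ACC}^\alpha$. For $\mathtt{ACC}^\alpha\leqoLT\mathtt{Error}_{1/\N}$, on public input $e$ the inner reduction returns the unique public input $\ast$ of $\mathtt{Error}_{1/\N}$, the secret inner reduction returns the singleton $\{\varphi_e^\alpha(e)\}$ when $\varphi_e^\alpha(e)\downarrow$ and any singleton otherwise, and the outer reduction is the identity on the reply, since any $m\in\N\setminus\{\varphi_e^\alpha(e)\}$ already lies in $\mathtt{ACC}^\alpha(e)$.

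For $\mathtt{ACC}^\alpha\not\leqLT\mathtt{Id}$, note that since $\mathtt{Id}$ carries no useful oracle information, a reduction would produce a total computable $\psi$ with $\psi(e)\in\mathtt{ACC}^\alpha(e)$ for every $e$. Such a $\psi$ is in particular $\alpha$-computable, hence has an $\alpha$-index $e$; but then $\psi(e)=\varphi_e^\alpha(e)\notin\mathtt{ACC}^\alpha(e)$, contradicting the defining property of $\mathtt{ACC}^\alpha$. This is the standard Kleene recursion-theorem diagonal.

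The main obstacle is $\mathtt{Error}_{1/\N}\not\leqLT\mathtt{ACC}^\alpha$, and the key structural observation is that $\mathtt{ACC}^\alpha$, regarded as a bilayer function through Example~\ref{exa:pmv-is-bilayer}, has a trivial secret domain, so along any play of $\mathfrak{G}(\mathtt{Error}_{1/\N},\mathtt{ACC}^\alpha)$ the moves $z_n$ of \Nim are forced to equal $\ast$ and she is informationally inert. Assume for contradiction that $(\tau\mid\eta)$ is a winning \Art-\Nim strategy. I would introduce the deterministic (not necessarily computable) \Mer counter-strategy $\sigma$ that, in response to \Art's query $u_n$, returns the least element of $\mathtt{ACC}^\alpha(u_n)$; this is always defined since $\mathtt{ACC}^\alpha(u_n)$ is nonempty, so $\sigma$ obeys the rule and winningness forces \Art to declare termination after finitely many rounds. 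Because \Art's computable $\tau$ never sees the secret component of \Mer's first move, and $\sigma$'s subsequent moves depend only on $\alpha$ and the prior queries (never on that secret), the entire play following $(\tau,\eta,\sigma)$ is identical for every choice of \Mer's secret first-move input. Consequently \Art's declared-termination value $F=F(\tau,\alpha)$ is a fixed number independent of \Mer's secret. Taking \Mer's first move to be $(\ast\mid\{F\})$ then yields a play in which \Art outputs $F$, contradicting $F\in\N\setminus\{F\}=\mathtt{Error}_{1/\N}(\ast\mid\{F\})$. The only delicate point is the verification that the whole play is truly independent of the secret $\{a\}$, which rests entirely on the forced triviality of \Nim's moves noted above.
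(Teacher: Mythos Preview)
Your proof is correct and supplies precisely the details the paper omits (the paper states the observation without proof, calling it ``obvious''). Your argument for the main non-reduction $\mathtt{Error}_{1/\N}\not\leqLT\mathtt{ACC}^\alpha$ is exactly the natural one the authors presumably had in mind: since $\mathtt{ACC}^\alpha$ viewed as a bilayer function has trivial secret domain, \Nim's moves are forced and carry no information, so by choosing a \Me response strategy that ignores his own secret first input you make the entire play---and hence \Art's terminal value $F$---independent of that secret, which is then defeated by the secret $\{F\}$.
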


Lee \cite[Open Problem 3.5.18]{Lee} asked if there is the least topology strictly above ${\tt Id}$.
By Corollary \ref{cor:Turing-vs-LT}, it is the same as asking if there is the $\leqLT $-least bilayer function which is strictly $\geqLT $-above ${\tt Id}$.
If such a function exists, then it must be a non-basic bilayer function.
To solve this problem, given a partial function $g\pcolon\N\to\N$, we consider the following multifunction ${\tt Avoid}_g\colon\N\tto\N$:
\[{\tt Avoid}_g(n)=
\begin{cases}
\N\setminus\{g(n)\}&\mbox{ if $n\in{\rm dom}(g)$}\\
\N&\mbox{ otherwise.}
\end{cases}
\]

We first show that there exists no $\leq_T$-least partial multifunction which is strictly $\geq_T$-above ${\tt Id}$.

\begin{lemma}\label{lem:cone-avoiding}
Let ${\tt P}\pcolon\N\tto\N$ be a partial multifunction such that ${\tt P}>_T{\tt Id}$.
Then, there exists a total function $g\colon\N\to\N$ such that
\[{\tt Id}<_T{\tt Avoid}_g\quad\mbox{ and }\quad {\tt P}\not\leq_T{\tt Avoid}_g.\]
\end{lemma}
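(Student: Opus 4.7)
The plan is to construct $g\colon\N\to\N$ by a finite-extension argument, carried out relative to a sufficiently strong oracle (e.g.\ $\emptyset''$), meeting for every $e\in\N$ two kinds of requirements. The first, $R_e$, says that if $\varphi_e$ is total then $\varphi_e(n)=g(n)$ for some $n$; meeting all $R_e$ rules out any total computable refinement of ${\tt Avoid}_g$ and thus secures ${\tt Id}<_T{\tt Avoid}_g$. The second, $S_e$, says that the $e$-th computable Arthur strategy $\tau_e$ is not winning in $\mathfrak{G}({\tt P},{\tt Avoid}_g)$; meeting all $S_e$ gives ${\tt P}\not\leq_T{\tt Avoid}_g$. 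At each stage I would maintain a finite partial function $g_s$ together with a finitely-described (possibly computably infinite) block set $B_s$ of forbidden pairs $(u,v)$, with $g_s(u)\neq v$ when both are determined.

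Handling $R_e$ is routine: pick a fresh $n\notin\dom(g_s)$ with $\varphi_e(n)\downarrow$ and $(n,\varphi_e(n))\notin B_s$, and set $g_{s+1}(n)=\varphi_e(n)$. For $S_e$ the key observation is that, for any fixed $v$, the ``constant $v$'' Merlin strategy $M_v$ (always respond $v$) is computable, so $x_0\mapsto(\tau_e\circ M_v)(x_0)$ is a partial computable function. Since ${\tt P}>_T{\tt Id}$, this map cannot refine ${\tt P}$, so some $x_0\in\dom({\tt P})$ witnesses that the computation either diverges at $x_0$ or converges to a value outside ${\tt P}(x_0)$. I would choose a fresh $v_e$ (larger than all previously used values) and use the oracle to locate such $x_0$. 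If $\tau_e\circ M_{v_e}$ terminates with a wrong answer after Arthur's queries $u_0,\dots,u_m$, the defeating play is finite and I would add the pairs $(u_i,v_e)$ to $B_{s+1}$; otherwise the play diverges and I would commit to the uniformly computable infinite block $\{(u,v_e):u\in U_e\}$, where $U_e$ is the query set of that divergent play. Any $g$ honoring this commitment makes $M_{v_e}$ a valid Merlin strategy witnessing that $\tau_e$ diverges on $x_0$, defeating it.

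The hard part will be to verify that these possibly infinite commitments do not sabotage later $R_{e'}$-requirements. By the freshness of each $v_e$ the set of values forbidden at any single coordinate $n$ remains sparse: at most one per earlier $S$-stage whose divergent query set contains $n$. The delicate case is when some $S_e$ committed to a divergent play whose query set $U_e$ is cofinite and some later $\varphi_{e'}$ is essentially the constant $v_e$; here I would use a priority-style bookkeeping in which an obstructed $R_{e'}$-requirement can \emph{injure} the offending $S_e$-commitment and replace it by an alternative defeating Merlin strategy, whose existence is again guaranteed by the same computability argument (for every computable Merlin strategy $M$, $\tau_e\circ M$ is partial computable and so cannot refine the non-computable ${\tt P}$). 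Standard finite-injury bookkeeping then ensures each requirement is injured only finitely often, so the construction converges to a total $g$ satisfying both ${\tt Id}<_T{\tt Avoid}_g$ and ${\tt P}\not\leq_T{\tt Avoid}_g$.
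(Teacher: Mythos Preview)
Your finite-extension framework and your handling of the $R_e$ requirements are fine, and this would be a legitimate alternative to the paper's genericity argument. The gap is in your treatment of the divergent case for $S_e$.

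Concretely: if $(\tau_e\circ M_{v_e})(x_0)$ diverges and you commit to $g(u)\neq v_e$ for all $u$ in a cofinite query set $U_e$, then any later $\varphi_{e'}$ that is essentially the constant $v_e$ blocks $R_{e'}$. You propose to injure $S_e$ and ``replace it by an alternative defeating Merlin strategy,'' but you never say what this alternative is. If it is another constant $M_{v_e'}$, the identical problem can recur: $\tau_e\circ M_{v_e'}$ may again diverge with a cofinite query set, and some still later $\varphi_{e''}$ may be constant $v_e'$. Nothing in your outline bounds the number of such injuries, so the appeal to ``standard finite-injury bookkeeping'' is unjustified. Worse, an infinite commitment $g(u)\neq v_e$ for cofinitely many $u$ is itself in direct tension with ${\tt Id}<_T{\tt Avoid}_g$, since it makes the constant $v_e$ a near-refinement of ${\tt Avoid}_g$.

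The idea you are missing, which is exactly the content of the paper's Claim, is that the divergent case never needs to be entered at all if you do not restrict yourself to \emph{constant} Merlin strategies. The dichotomy is: for any threshold $s$, either there exist $n\in\dom({\tt P})$ and a finite string $\sigma$ with $\min\sigma>s$ such that $\tau_e(n,\sigma)$ halts with a value \emph{outside} ${\tt P}(n)$, or else $\tau_e$ is not winning for ${\tt Avoid}_g$ for any total $g$ whatsoever. In the first alternative you commit only the finitely many constraints $g(u_i)\neq\sigma(i{+}1)$; in the second you do nothing. The dichotomy holds because if the first alternative fails yet $\tau_e$ were winning for some $g$, then brute-force search over all $\sigma$ with $\min\sigma>s$ halts on each $n\in\dom({\tt P})$ (a rule-obeying Merlin play for that $g$ with all values $>s$ exists, since ${\tt Avoid}_g$ removes only one value, and forces Arthur to halt) and always outputs a correct answer, so ${\tt P}$ would be computable. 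Thus only finite commitments ever arise and no injury is needed. The paper packages this via an $\alpha_{\tt P}$-generic $g$ rather than an explicit finite extension, but the combinatorial core is this claim; your restriction to constant Merlin responses is precisely what hides it.
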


\begin{proof}
Let $\alpha_{\tt P}\in\N^\N$ be an oracle coding the full information of ${\tt P}$.
For instance,
\[
\alpha_{\tt P}(\langle n,m\rangle)=
\begin{cases}
0&\mbox{ if }n\in{\rm dom}({\tt P})\;\land\;m\not\in{\tt P}(n)\\
1&\mbox{ if }n\in{\rm dom}({\tt P})\;\land\;m\in{\tt P}(n)\\
2&\mbox{ if }n\not\in{\rm dom}({\tt P})\\
\end{cases}
\]

Let $g\colon\N\to 2$ be an $\alpha_{\tt P}$-generic real (that is, $g$ is contained in any dense $\alpha_{\tt P}$-computable open set in Cantor space $2^\N$, and such a $g$ exists by the Baire category theorem).
By genericity, for any computable function $f$, there are infinitely many $n$ such that $f(n)=g(n)$; hence ${\tt Id}<_T{\tt Avoid}_g$.
Suppose for the sake of contradiction that \Ar has a winning strategy $\Psi$ for ${\tt P}\leq_T{\tt Avoid}_g$.
Then we show the following claim:

\begin{claim}
For any $s\in\N$, there exist $n\in\N$ and $\sigma\in\N^{<\N}$ such that
\[\min\sigma>s\;\land\;\Psi(n,\sigma)\downarrow=\lrangle{1,y}\;\land\;{\tt P}(n)\downarrow\;\land\;y\not\in{\tt P}(n),\]
where recall that by $\Psi(n,\sigma)=\langle 1,y\rangle$ we mean that, after reading \Mer's moves $(n,\sigma)$, \Ar declares termination with $y$.
\end{claim}

\begin{proof}
Otherwise, there is a number $s$ refuting the claim.
Let $\Psi_1(n,\sigma)$ denote the second coordinate of $\Psi(n,\sigma)$.
Let us consider a sequence $a_0,a_1,\dots$ such that $a_{\ell+1}\in{\tt Avoid}_g(\Psi_1(n,a_0,a_1,\dots,a_\ell))$ for any $\ell$, i.e., $\langle a_i\rangle$ is \Mer's moves obeying the rule.
It is clear that there always exists such an $a_\ell>s$ since ${\tt Avoid}_g$ only reduces the number of possible values by one.
Since $\Psi$ is winning, after reading a finite initial segment of the sequence $\langle a_i\rangle$, \Ar declares termination; that is, $\Psi(n,a_0,a_1,\dots,a_\ell)=\lrangle{1,y}$ for some $\ell$ and $y$.

In particular, there exists $\sigma$ with $\min\sigma>s$ such that $\Psi(n,\sigma)\downarrow=\lrangle{1,y}$ for some $y$.
Given $n$, without knowing the information about $g$, one can effectively find such a $\sigma$ by brute-force.
Then define $f(n)$ as $\Psi_1(n,\sigma)$.
Clearly, $f$ is a computable function.
However, since the claim is supposed to fail, we have either ${\tt P}(n)\uparrow$ or $f(n)=y\in{\tt P}(n)$.
Hence, the computable function $f$ refines ${\tt P}$, which means ${\tt P}\leq_T{\tt Id}$.
However this contradicts our assumption on ${\tt P}$.
\end{proof}

Given $s$, one can find an $n$ and a $\sigma$ in the above claim in an $\alpha_{\tt P}$-computable manner since $\alpha_{\tt P}$ contains the full information of ${\tt P}$.
For a given $t$, put $s(t)=\max\{g(n):n<t\}$, and for this $s=s(t)$, we write $n_t$ and $\sigma_t=(a^t_i)_{i<\ell}$ for $n$ and $\sigma$ in the claim.
By the above claim, after reading \Mer's play $(n_t,\sigma_t)$, \Art's strategy $\Psi$ declares termination, but fails to compute a solution of ${\tt P}$.
However, since $\Psi$ is a winning strategy for ${\tt P}\leq_T{\tt Avoid}_g$, \Me must have violated the rule at some round.
In other words, there exists $j<\ell$ such that $a_{j+1}^t\not\in{\tt Avoid}_g(\Psi_1(n,a_0^t,a_1^t,\dots,a_j^t))$.
Put $m_j^t=\Psi_1(n,a_0^t,a_1^t,\dots,a_j^t)$, and we now have $a_{j+1}^t=g(m_j^t)$ for some $j<\ell$.
Note that, since $a_j^t>s$ for any $j$, if $m_j^t<t$ then $g(m_j^t)\not=a_{j+1}^t$ by our choice of $s=s(t)$.
Now consider the finite set $E_t$ defined by
\[E_t=\{\langle m_j^t,a_{j+1}^t\rangle:j<\ell\;\land\;m_j^t\geq t\}.\]

Given $t$, one can find the canonical code of $E_t$ is an $\alpha_{\tt P}$-computable manner.
Note that there exists $\langle m,a\rangle\in E_t$ such that $g(m)=a$.
However, note that
\[D=\{\tau\in 2^{<\N }:(\exists t\in\N)(\forall\langle m,a\rangle\in E_t)\;\tau(m)\not=a\}\]
yields a dense $\alpha_{\tt P}$-computable open set in Cantor space $2^\N$.
This is because, given a binary string $\tau$, choose $t>|\tau|$, and extend $\tau$ to $\tau^\ast$ so that $\tau^\ast(m)\not=a$ for any $\langle m,a\rangle\in E_t$.
This is doable since $\langle m,a\rangle\in E_t$ obviously implies $m\geq t$.
In this way, any $\tau$ extends to $\tau^\ast\in D$ and thus $D$ is dense.
Since $g$ is $\alpha_{\tt P}$-generic, we have $g\in D$; that is, there exists $t$ such that $g(m)\not=a$ for any $\langle m,a\rangle\in E_t$.
However, by the property of $E_t$, for any $t$, we must have a pair $\langle m^t_j,a^t_{j+1}\rangle\in E_t$ such that $g(m_j^t)=a_{j+1}^t$, a contradiction.
\end{proof}

\begin{theorem}
There exists no $\leqLT $-minimal bilayer function which is strictly $\geqLT $-above ${\tt Id}$; that is, for any bilayer function ${\tt P}\gLT{\tt Id}$ there exists a bilayer function ${\tt Q}$ such that ${\tt Id}\lLT{\tt Q}\lLT{\tt P}$.
Hence, there exists no minimal Lawvere-Tierney topology which is strictly above ${\tt Id}$.
\end{theorem}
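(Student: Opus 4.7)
The plan is to reduce the bilayer case to the partial multifunction case so as to invoke Lemma \ref{lem:cone-avoiding}, and take the desired ${\tt Q}$ to be ${\tt Avoid}_g$ for a carefully chosen $g$.

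\textbf{First step (extracting a partial multifunction witness).}
Given a bilayer function ${\tt P}$ with ${\tt P}\gLT {\tt Id}$, I first build a partial multifunction ${\tt P}^\flat\pcolon\N\tto\N$ with ${\tt P}^\flat\leqLT{\tt P}$ and ${\tt P}^\flat\not\leq_T{\tt Id}$. Since ${\tt P}\not\leqLT{\tt Id}$, no computable total function $f$ satisfies $f(x)\in{\tt P}(x\mid c)$ for every $(x\mid c)\in\dom({\tt P})$. Enumerating all partial computable $\varphi_e$, I diagonalize to select, for each $e$, an input $(x_e\mid c_e)\in\dom({\tt P})$ with $\varphi_e(x_e)\notin{\tt P}(x_e\mid c_e)$ (reindexing to force the $x_e$ to be distinct if necessary). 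I then define ${\tt P}^\flat(x_e)={\tt P}(x_e\mid c_e)$, witnessed by Nimue's advice $x_e\mapsto c_e$; on other inputs I leave ${\tt P}^\flat$ undefined. This yields ${\tt P}^\flat\leqLT{\tt P}$ via a trivial one-query Arthur-Nimue strategy, and ${\tt P}^\flat$ is refined by no computable function.

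\textbf{Second step (applying the lemma).}
Apply Lemma \ref{lem:cone-avoiding} to the partial multifunction ${\tt P}^\flat$ to obtain a total function $g\colon\N\to\N$ such that ${\tt Id}<_T{\tt Avoid}_g$ and ${\tt P}^\flat\not\leq_T{\tt Avoid}_g$.

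\textbf{Third step (verifying the strict sandwich).}
Set ${\tt Q}={\tt Avoid}_g$. Since partial multifunctions sit inside bilayer functions (Example \ref{exa:pmv-is-bilayer}) with $\leqLT$ collapsing to $\leq_T$ there, the conclusion ${\tt Id}\lLT{\tt Q}$ is immediate. For the strict upper bound, suppose toward a contradiction that ${\tt P}\leqLT{\tt Q}$. Combining with ${\tt P}^\flat\leqLT{\tt P}$ and the transitivity of $\leqLT$ (Proposition \ref{prop:Turing-is-preorder}), this gives ${\tt P}^\flat\leqLT{\tt Avoid}_g$, hence ${\tt P}^\flat\leq_T{\tt Avoid}_g$, contradicting Lemma \ref{lem:cone-avoiding}. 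Corollary \ref{cor:Turing-vs-LT} then translates this conclusion into the statement about Lawvere-Tierney topologies.

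\textbf{Main obstacle.}
The delicate part is ensuring ${\tt Q}={\tt Avoid}_g\leqLT{\tt P}$, since the generic $g$ produced by Lemma \ref{lem:cone-avoiding} is obtained by $\alpha_{{\tt P}^\flat}$-genericity and need not a priori be bilayer-reducible to ${\tt P}$. The fix is to rerun the forcing argument of Lemma \ref{lem:cone-avoiding} inside the proof of the theorem, with an additional positive requirement: at each stage $s$ I also arrange that the value $g(n)$ agrees with the output of a fixed, pre-committed Arthur-Nimue strategy for one-query reduction ${\tt Avoid}_g\leqLT{\tt P}$, where Nimue's secret advice is chosen from the data carried by the forcing condition. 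Concretely, I can pick $g(n)$ to be a value appearing in some ${\tt P}(n'\mid c)$ reachable from Nimue's advice, and let Arthur's response to input $n$ be obtained by querying ${\tt P}$ with a different Nimue advice $c'$, yielding a value $\neq g(n)$. Since the diagonalization requirements against ${\tt P}^\flat\leq_T{\tt Avoid}_g$ are $\alpha_{{\tt P}^\flat}$-computable open dense conditions, they remain satisfiable in parallel with this positive reducibility requirement, because fixing Arthur's strategy only specifies $g(n)$ on countably many coordinates of the forcing poset and leaves infinitely many free extensions at each stage.
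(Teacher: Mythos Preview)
Your proposal has two genuine gaps.

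\textbf{The first step fails in an essential case.} Your diagonalization to build ${\tt P}^\flat$ requires choosing distinct public inputs $x_e$, but this is impossible when ${\tt P}$ is (or behaves like) a basic bilayer function. Take ${\tt P}$ with public domain $\{0\}$ and ${\tt P}(0\mid n)=\N\setminus\{n\}$; then ${\tt P}\gLT{\tt Id}$, yet every partial multifunction ${\tt P}^\flat$ with ${\tt P}^\flat\leqLT{\tt P}$ and domain contained in $\{0\}$ is just a single nonempty subset of $\N$, hence trivially computable. The paper handles exactly this situation by a case split: if some fibre ${\tt P}_n(\ast\mid c):={\tt P}(n\mid c)$ satisfies ${\tt P}_n\not\leqLT{\tt Id}$, then by the minimality of ${\tt Error}_{1/\N}$ among basic bilayer functions (Lee--van Oosten) one gets ${\tt Error}_{1/\N}\leqLT{\tt P}$, and Observation~\ref{obs:acc-error} already supplies a strictly intermediate ${\tt Q}$. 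Only in the complementary case does the paper pass to a partial multifunction, namely $\tilde{\tt P}(n)=\bigcap_c{\tt P}(n\mid c)$, which is then nonempty-valued and plays the role of your ${\tt P}^\flat$.

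\textbf{The fix for the main obstacle does not work as stated.} You want to force ${\tt Avoid}_g\leqLT{\tt P}$ alongside the genericity requirements, but your sketch does not explain how a \emph{computable} \Art\ strategy can avoid the value $g(n)$ when $g$ is a generic real being constructed over an oracle far above anything ${\tt P}$ computes. For instance, if ${\tt P}$ is the Turing-degree topology of some fixed $\alpha\in 2^\N$, then ${\tt Avoid}_g\leqLT{\tt P}$ would require an $\alpha$-computable function everywhere different from $g$; since the genericity of $g$ is over $\alpha_{{\tt P}^\flat}$, which can be vastly more complex than $\alpha$, there is no reason such a function should exist, and your ``positive requirement'' does not produce one. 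The paper sidesteps this entirely: rather than asking for ${\tt Avoid}_g\leqLT{\tt P}$, it sets ${\tt Q}={\tt P}\sqcap{\tt Avoid}_g$ using the explicit meet in the bilayered Turing lattice. Then ${\tt Q}\leqLT{\tt P}$ is free, ${\tt Q}\lLT{\tt P}$ follows from ${\tt P}\not\leqLT{\tt Avoid}_g$, and a short case analysis shows ${\tt Q}\gLT{\tt Id}$.
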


\begin{proof}
Let ${\tt P}$ be a bilayer function such that ${\tt P}\gLT{\tt Id}$.
Put ${\tt P}_n(\ast\mid c)={\tt P}(n\mid c)$, and then ${\tt P}_n$ is a basic bilayer function.
If ${\tt P}_n\not\leqLT {\rm id}$, then by minimality of ${\tt Error}_{1/\N }$ (\cite[Proposition 5.5]{LvO}), we have ${\tt Error}_{1/\N }\leqLT {\tt P}_n\leqLT {\tt P}$.
By Observation \ref{obs:acc-error}, ${\tt P}$ cannot be minimal.
Thus, one can assume that ${\tt P}_n\leqLT {\tt Id}$ for any $n\in{\rm dom}({\tt P})$.
This implies that, since ${\tt P}_n$ is a basic bilayer function, if $n\in{\rm dom}({\tt P})$, then $\tilde{\tt P}(n):=\bigcap_c{\tt P}_n(\ast\mid c)$ is nonempty.
This is because, at some round in the reduction game for ${\tt P}_n\leqLT {\tt Id}$, \Art's winning strategy declares termination with some $u\in{\tt P}_n(\ast\mid c)$, which is independent of $c$, as $c$ is invisible to \Art.
Then, since $\tilde{\tt P}$ is a partial multifunction, by Lemma \ref{lem:cone-avoiding}, there exists a total function $g$ such that ${\tt Id}<_T{\tt Avoid}_g$ and $\tilde{\tt P}\not\leq_T{\tt Avoid}_g$.

We claim that ${\tt P}\not\leqLT{\tt Avoid}_g$ also holds.
Otherwise, there exists \Art's winning strategy $\tau$ witnessing ${\tt P}\leqLT {\tt Avoid}_g$, where note that, since ${\tt Avoid}_g$ is a partial multifunction, \Ni does not intervene in the game.
Let $(n\mid c)$ be \Mer's first move.
Since $\tau$ is winning, \Ar declares termination with some $u\in{\tt P}(n\mid c)$.
However, since $c$ is invisible to \Art, the last value $u$ only depends on $n$.
This implies that $u\in\tilde{\tt P}(n)=\bigcap_{c}{\tt P}(n\mid c)$.
Namely, the strategy $\tau$ also witnesses $\tilde{\tt P}\leq_T{\tt Avoid}_g$.
However, this contradicts our choice of $g$.

Moreover, since the Lawvere-Tierney topologies form a lattice, the Turing degrees of bilayer functions also form a lattice by Corollary \ref{cor:Turing-vs-LT}.
For an explicit description of the infimum operation, we define the meet ${\tt A}\sqcap{\tt B}$ of bilayer functions ${\tt A}$ and ${\tt B}$ as follows:
\[({\tt A}\sqcap{\tt B})(m,n\mid c,d)=\big(\{0\}\times{\tt A}(m\mid c)\big)\cup\big(\{1\}\times{\tt B}(n\mid d)\big).\]

Note that this is a bilayer analogue of the meet in the Weihrauch lattice \cite{pauly-handbook}.
We claim that the meet ${\tt Q}:={\tt P}\sqcap{\tt Avoid}_g$ of ${\tt P}$ and ${\tt Avoid}_g$ strictly lies between ${\tt Id}$ and ${\tt P}$.
As ${\tt P}\not\leqLT{\tt Avoid}_g$, we have ${\tt Q}\lLT{\tt P}$.
If ${\tt Id}\not\lLT{\tt Q}$, then ${\tt Q}\leqLT{\tt Id}$, so ${\tt Q}$ is computable.
This means that there exists a computable function $p$ such that $p(n,m)\in{\tt Q}(n,m\mid c,\ast)$ for $n,m,c$.
First consider the case that for any $n\in{\rm dom}({\tt P})$ there exists $m\in\N$ such that $p(n,m)$ is of the form $\langle 0,k\rangle$.
For such an $m$, since ${\tt Avoid}_g$ is total, we also have $\langle n,m\rangle\in{\rm dom}({\tt Q})$, and thus $p(n,m)\in{\tt Q}(n,m\mid c,\ast)$, so $k\in{\tt P}(n\mid c)$ for any $c$.
In this case, given $n\in{\rm dom}({\tt P})$, by brute-force, we effectively search for $m\in\N$ such that the first coordinate of $p(n,m)$ is $0$, then return its second coordinate, which must be a solution to ${\tt P}(n\mid c)$ as seen above.
Hence, this procedure witnesses that ${\tt P}$ is computable, which contradicts the assumption ${\tt P}\gLT{\tt Id}$.
Thus, there must exist $n\in{\rm dom}({\tt P})$ such that $p(n,m)$ is of the form $\langle 1,k\rangle$, for any $m\in\N$.
As in the above argument, we have $p(n,m)\in{\tt Q}(n,m\mid c,\ast)$, so $k\in{\tt Avoid}_g(m)$.
Then, the algorithm which, given $m\in\N$, returns the second coordinate of $p(n,m)$ witnesses that ${\tt Avoid}_g$ is computable.
However, this contradicts the property ${\tt Avoid}_g\gLT{\tt Id}$.
Consequently, ${\tt Id}\lLT{\tt Q}\lLT{\tt P}$.
This verifies the first assertion.
Then, the second assertion follows from Corollary \ref{cor:Turing-vs-LT}.
\end{proof}

This solves Lee's problem \cite[Open Problem 3.5.18]{Lee}. 

\section{Other topologies}\label{sec:other-topologies}

\subsection{Probabilistic computation}\label{sec:prob-comp-er}

As in Section \ref{sec:imperfect-information-game}, we consider bilayer functions expressing certain kinds of probabilistic computation.
However, unlike Section \ref{sec:imperfect-information-game}, for the sake of brevity in discussion, we require that a parameter $A$ be compact.
By inner regularity, every $\mu$-measurable set in $2^\N$ is approximated from the inside by a compact set, so adding this assumption does not make much difference.
Here, recall that $\mu$ is the uniform probability measure on $2^\N$.
Then we consider the following bilayer function:
\begin{gather*}
\begin{aligned}
{\tt ProbError}_\ep(\langle e,n\rangle\mid A)\downarrow\iff
&\mbox{$A\subseteq 2^\N$ is compact}\\
&\land\;\mu(A)\geq 1-\ep\;\land\;(\forall \alpha\in A)\;\varphi_e^\alpha(n)\downarrow.
\end{aligned}
\\
{\tt ProbError}_\ep(\langle e,n\rangle\mid A)=\{\varphi_e^\alpha(n):\alpha\in A\}.
\end{gather*}

One would say that the subtopos obtained from the Lawvere-Tierney topology corresponding to ${\tt ProbError}_\ep$ is the ``{\em world of probabilistically computable mathematics with error probability $\ep$}.''
Surprisingly, we show that ${\tt ProbError}$ induces exactly the same topology as Lee-van Oosten's function ${\tt Error}$.

\begin{prop}\label{prop:prob-error-and-error}
For any $p,q\in\N$ with $p\leq q$, ${\tt ProbError}_{p/q}\eqLT{\tt Error}_{p/q}$.
\end{prop}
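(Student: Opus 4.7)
My plan is to prove both directions of the equivalence separately, using the game-theoretic formulation of $\leqLT$.

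For $\mathsf{Error}_{p/q}\leqoLT\mathsf{ProbError}_{p/q}$, a one-query reduction suffices. I would fix a computable measure-preserving identification $2^\N\cong\{0,\dots,q-1\}\times 2^\N$ (with normalized counting measure on the first factor and $\mu$ on the second), so that each slab $\{i\}\times 2^\N$ is clopen of measure $1/q$, and pick an index $e$ such that $\varphi_e^\alpha(0)$ reads enough of $\alpha$ to return its slab index. For an instance $(\ast\mid A)$ of $\mathsf{Error}_{p/q}$ with $|A|=p$, set the secret input $A^{*}=\bigsqcup_{i\notin A}\{i\}\times 2^\N$ for $\mathsf{ProbError}_{p/q}$; it is compact of measure exactly $1-p/q$, and $\{\varphi_e^\alpha(0):\alpha\in A^{*}\}$ is precisely $\{0,\dots,q-1\}\setminus A=\mathsf{Error}_{p/q}(\ast\mid A)$, so the inner, outer, and secret reductions $H(\ast)=\langle e,0\rangle$, $K(\ast,m)=m$, and $L(\ast,A)=A^{*}$ witness $\mathsf{Error}_{p/q}\leqoLT\mathsf{ProbError}_{p/q}$.

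For the reverse direction $\mathsf{ProbError}_{p/q}\leqLT\mathsf{Error}_{p/q}$, the plan is a multi-query winning Arthur--Nimue strategy. Using Theorem \ref{thm:error-separation} together with a parallel argument establishing $\mathsf{ProbError}_{p/q}\equivLT\mathsf{ProbError}_{1/\lceil q/p\rceil}$, it is enough to treat the case $p=1$, so I focus on $\mathsf{ProbError}_{1/q}\leqLT\mathsf{Error}_{1/q}$. Given a public input $\langle e,n\rangle$ and a secret compact $A$ with $\mu(A)\geq 1-1/q$, compactness of $A$ together with the halting hypothesis yields a uniform use bound $L$ for $\varphi_e^\alpha(n)$ on $A$, so that $A_L:=\{\sigma\in 2^L:[\sigma]\cap A\neq\emptyset\}$ satisfies $|A_L|/2^L\geq 1-1/q$. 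Arthur's high-level strategy is to query $\mathsf{Error}_{1/q}$ iteratively, interpreting Merlin's responses as contributing the next digit(s) of an oracle prefix $\tau$ under a measure-preserving encoding of $q^\N$ into $2^\N$, and to terminate as soon as $\varphi_e^\tau(n)$ halts with use $\leq|\tau|$, outputting that value; Nimue, who knows $A_L$, uses her exclusions to steer Merlin's responses toward $A_L$.

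The chief obstacle is combinatorial. A naive pruning argument, in which Nimue at each round simply forbids the sub-cylinder least represented in $A$, fails: there exist compact $A$ with $\mu(A)\geq 1-1/q$ whose $A_L$ does not admit a balanced $(q-1)$-ary subtree of the $q$-ary response tree lying entirely in $A_L$, since the relative density of $A$ in a chosen sub-cylinder can collapse after a single refinement. My plan to overcome this is to let Arthur bundle $k$ consecutive $\mathsf{Error}_{1/q}$ queries into a single ``super-round'' whose $(q-1)^k$ surviving Merlin response tuples are mapped many-to-one onto the $q$ possible next-digit values, via an encoding chosen so that each digit value has at least $\lceil(q-1)^k/q\rceil$ preimages. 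For $k$ sufficiently large (depending on $L$, which Nimue knows), this redundancy gives Nimue enough room within each super-round to guarantee that Merlin's tuple lands in the preimage of a digit that keeps $\tau$ inside $A_L$, regardless of adversarial choices. Verifying that such an encoding exists for every relative-density-$(1-1/q)$ subset of the $q$-ary tree is a combinatorial lemma in the spirit of Fact \ref{fact:Cenzer-Hinman}, and establishing it --- or, if necessary, further refining the super-round construction by iterating --- is where I expect the main technical work to lie.
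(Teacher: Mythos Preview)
Your easy direction is fine. The hard direction has a genuine gap: the super-round idea, as you describe it, does not escape the density-collapse problem you yourself flag. You fix $k$ and use $k$ queries to choose each $q$-ary digit of $\tau$, but once a digit is fixed the relative density of $A$ in the chosen sub-cylinder may drop well below $1-1/q$, so no single $k$ chosen in advance suffices for all subsequent digits; the combinatorial lemma you would need is an iterated statement that you leave entirely open. Your reduction to $p=1$ via ``a parallel argument'' for ${\tt ProbError}$ is also unjustified---the proof of Theorem~\ref{thm:error-separation} is finite block combinatorics with no evident direct measure-theoretic analogue---though this is secondary.

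The paper takes a different route that never tries to build an oracle string. The point is that you do not need any $\tau\in A$; you only need an output value $j$ with $\varphi_e^\alpha(n)=j$ for \emph{some} $\alpha\in A$. So work with the push-forward: at stage $s$, the stage-$s$ computation $\alpha\mapsto\varphi_{e,s}^{\alpha\upto s}(n)$ partitions $2^\N$ into finitely many clopen pieces $E_{j}^s$ (one per output value $j$) together with $E_\uparrow^s$ (not yet halted), all with rational measures; give them a common denominator $qr_s$. Nimue, who sees $A$, marks as bad exactly those pieces disjoint from $A$; since $\mu(A)\geq 1-p/q$, the bad pieces have total measure at most $p/q$, i.e., at most $pr_s$ of the $qr_s$ units. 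That is one instance of ${\tt Error}_{pr_s/qr_s}$, which reduces to ${\tt Error}_{p/q}$ by Theorem~\ref{thm:error-separation}. If Merlin's response lands in some $E_{j_i}^s$, Arthur terminates with $j_i$---correct, since then $A\cap E_{j_i}^s\neq\emptyset$. If it lands in $E_\uparrow^s$, proceed to stage $s+1$. By the compactness bound $t$ (known to Nimue), $E_\uparrow^t$ is disjoint from $A$ and hence bad, so termination is forced by stage $t$. Your idea can be salvaged along similar lines---iterate over $L$, and at each $L$ make a \emph{fresh} query to ${\tt Error}_{m_L/2^L}$ via Theorem~\ref{thm:error-separation} with Nimue's bad set $\{\sigma\in 2^L:[\sigma]\cap A=\emptyset\}$, rather than extending a previously chosen prefix---but that is not the digit-by-digit scheme you wrote.
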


\begin{proof}
Obviously, we have ${\tt Error}_{p/q}\leqLT{\tt ProbError}_{p/q}$.
We show the other direction.
Let $(\langle e,n\rangle\mid A)$ be an input for ${\tt ProbError}_{p/q}$ and assume that ${\tt ProbError}_{p/q}(\langle e,n\rangle\mid A)\downarrow$.
This means that $\varphi^\alpha_e(n)\downarrow$ for any $\alpha\in A$.
Since $A$ is compact, there exists $t$ such that the stage $t$ approximation $\varphi^{\alpha\upto t}_{e,t}(n)$ halts for any $\alpha\in A$.
Note that $\langle e,n\rangle\mapsto t$ is not necessarily computable, and thus \Ar does not have access to this information, but \Ni does have access to it.

The public input $\langle e,n\rangle$ can be thought of as a partial function $\alpha\mapsto\varphi_e^\alpha(n)$.
This induces the push-forward measure on $\N$ defined by $\nu(j)=\mu(\{\alpha\in 2^\N:\varphi_e^\alpha(n)\downarrow=j\})$.
We consider its finitary approximation; that is, put $E_j^s=\{\alpha\in 2^\N:\varphi_{e,s}^{\alpha\upto s}(n)\downarrow=j\}$, and define $\nu_s(j)=\mu(E_j^s)$.
For each stage $s$, the value $\nu_s(j)$ is rational, and moreover $\nu_s(j)>0$ happens for at most finitely many $j\in\N$.
Let $\langle j_i\rangle_{i<\ell(s)}$ be a list of all such $j$'s.
Clearly, $s\mapsto\langle\nu_s(j_i)\rangle_{i<\ell(s)}$ is computable.
Since we have only finitely many rationals $\langle\nu_s(j_i)\rangle_{i<\ell(s)}$, we can assume that all values have the same denominator and are of the form $\nu_s(j_i)=a_{s,i}/qr_s$.
Note that we clearly have $\sum_{i<\ell(s)}a_{s,i}\leq qr_s$.
Fix a pairwise disjoint sequence $(J^s_i)_{i<\ell(s)}$ such that $J^s_i\subseteq qr_s$ and $|J^s_i|=a_{s,i}$.

Since ${\tt Error}_{pr_s/qr_s}\leqLT {\tt Error}_{p/q}$ by Theorem \ref{thm:error-separation}, there is \Art's winning strategy $\tau_s$ witnessing this fact.
As $s\mapsto r_s$ is computable, one can easily see that the proof of Theorem \ref{thm:error-separation} ensures that $s\mapsto \tau_s$ is also computable.
Hence, instead of taking \Nim's moves as secret inputs to ${\tt Error}_{p/q}$, we can take them directly as secret inputs to ${\tt Error}_{pr_s/qr_s}$ through the above reduction implicitly.
Here, note that such a conversion takes multiple rounds through \Ar and \Nim's moves, but we do not count this number of rounds, and without mentioning it, all conversions are assumed to be done automatically.

Thus, at the $(s+1)$-st round, after seeing \Nim's previous move, we assume that \Me plays a move $j$ less than $qr_s$.
Then \Ar reacts to this.
If \Mer's move $j$ is contained in $J^s_i$ for some $i<\ell(s)$, then \Ar declares termination with $j_i$.
Otherwise, \Ar declares that the game proceeds to the next round, and urges \Ni to make the next move.
Now we describe \Nim's move $B_s$ at the $s$-th round.
\Ni reads the secret input $A$ and then define $B_s$ as follows:
For any $m\in J_i^s$, we define
\[m\in B_s\iff A\cap E^s_{j_i}=\emptyset.\]

For $m\in qr_s\setminus \bigcup_{i<\ell(s)}J_i^s$, we define
\[m\in B_s\iff A\cap E^s_\uparrow=\emptyset,\]
where $E^s_\uparrow=\{\alpha\in 2^\N:\varphi_{e,s}^{\alpha\upto s}(n)\uparrow\}$.
One can see that if $t\geq s$ then $qr_s\setminus\bigcup_{i<\ell(s)}J_i^s$ is included in $B_s$, where $t$ is a number mentioned in the first paragraph of this proof.
Note that, in order to enumerate $a_{s,i}$ many elements in $J_i^s$ into $B_s$, the measure of $A$ needs to be removed by $\nu_s(j_i)=a_{s,i}/qr_s$.
Similarly, in order to enumerate $qr_s-\sum_{i<\ell_s}a_{i,s}$ elements in $qr_s\setminus \bigcup_{i<\ell(s)}J_i^s$ into $B_s$, the measure of $A$ needs to be removed by
\[\mu(E^s_\uparrow)=1-\sum_{i<\ell(s)}\mu(E_{j_i}^s)=\frac{qr_s-\sum_{i<\ell_s}a_{i,s}}{qr_s}.\]

Since the measure of $A$ is greater than or equal to $1-p/q$, the measure removed from $A$ should be at most $pr_s/qr_s$.
Therefore, only at most $pr_s$ many elements can be enumerated into $B_s$, so $(\ast\mid B_s)$ belongs to the domain of ${\tt Error}_{pr_s/qr_s}$.
Hence, \Nim's move $B_s$ obeys the rule.

We claim that \Ar and \Ni win this game along the play described above.
Note that \Ar declares termination by the $(t+1)$-th round.
This is because the complement of $\bigcup_{i<\ell(s)}J_i^s$ is included in $B_s$, and thus \Me must play a move from $\bigcup_{i<\ell(s)}J_i^s$ at the $(t+1)$-th round.
In response to this move, \Art's strategy described above declares termination.
We now assume that \Ar declares termination with $j_i^s$ at the $(s+1)$-st round.
In order for this to happen, \Mer's previous move must belong to $J_i^s$.
In such a case, \Nim's previous move $B_s$ must satisfy $J_i^s\not\subseteq B_s$.
This means that $A\cap E_{j_i}^s\not=\emptyset$.
In particular, there exists $\alpha\in A$ such that $\varphi_e^\alpha(n)=j_i$.
Hence, $j_i\in{\tt ProbError}_{p/q}(\langle e,n\rangle\mid A)$.
\end{proof}

One can also consider a counterpart of ${\tt ProbError}_\ep$ in the context of partial multifunctions, which is known as {\em weak weak K\"onig's lemma} \cite{BGH15}.
Let $P_e$ be the $e$-th $\Pi^0_1$ subset of $2^\N$ (or the set of all infinite paths though the $e$-th primitive recursive subtree of $2^{<\N}$).
\begin{gather*}
{\tt WWKL}_\ep(\langle e,n,i\rangle)\downarrow\iff \mu(P_i)\geq 1-\ep\;\land\;(\forall \alpha\in P_i)\;\varphi_e^\alpha(n)\downarrow.\\
{\tt WWKL}_\ep(\langle e,n,i\rangle)=\{\varphi_e^\alpha(n):\alpha\in P_i\}.
\end{gather*}

As in Proposition \ref{prop:prob-error-and-error}, one can show that ${\tt WWKL}_{p/q}\eqLT{\tt LLPO}_{p/q}$.
However, of course, weak weak K\"onig's lemma is known to be much stronger than the lessor limited principle of omniscience.
Indeed, if we consider analogues of ${\tt WWKL}$ and ${\tt LLPO}$ in the Kleene-Vesley algebra (i.e., in the context of $\N^\N$-computation), then one can easily see that ${\tt WWKL}$ is strictly above ${\tt LLPO}$ with respect to Turing reducibility for $\N^\N$-computability (i.e., generalized Weihrauch reducibility).
Therefore, Proposition \ref{prop:prob-error-and-error} is a phenomenon specific to the effective topos.
If we consider another (relative) realizability topos, such as the Kleene-Vesley topos, the situation would be completely different.

\begin{figure}[t]
{\footnotesize
\includegraphics[width=9cm]{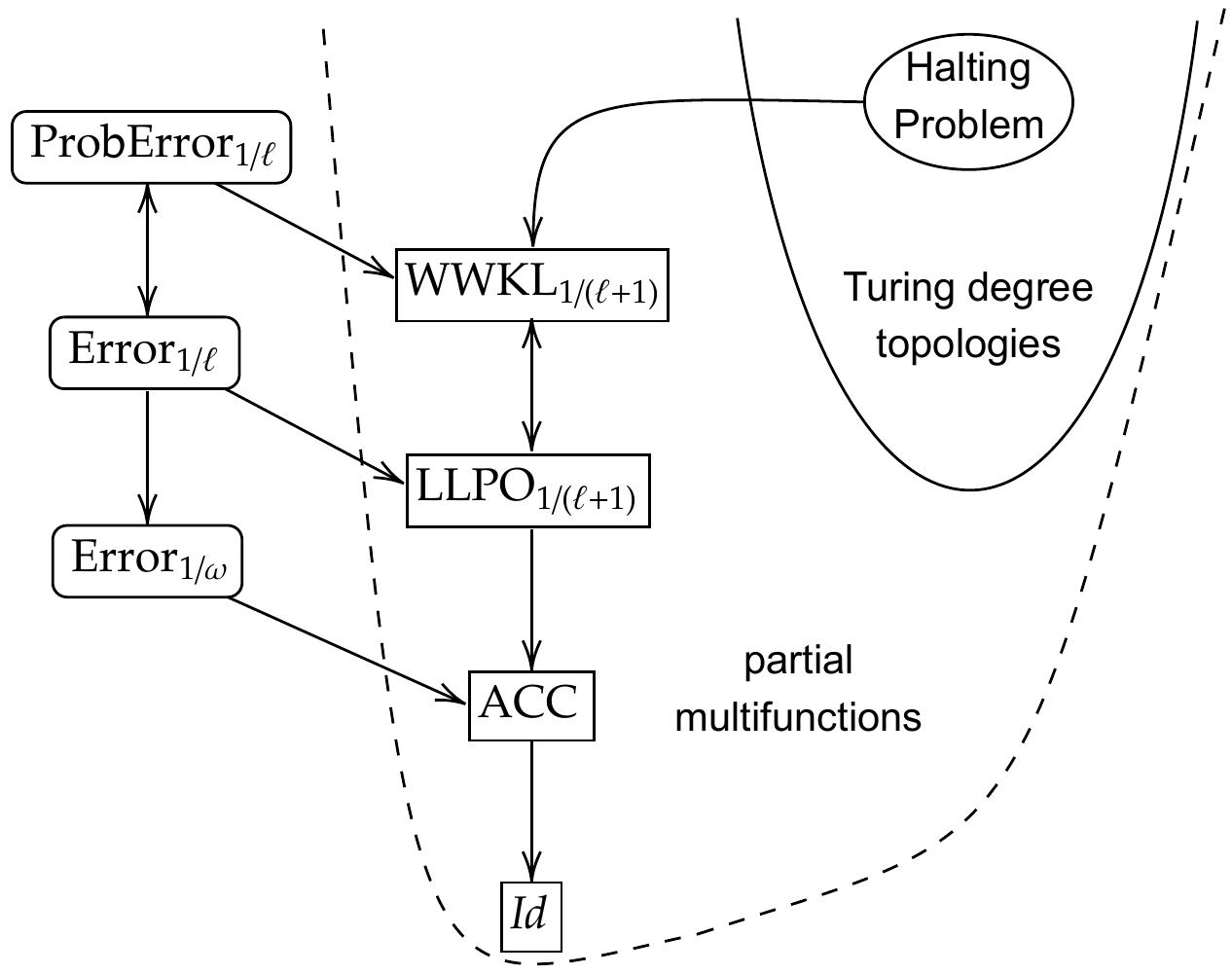}
\caption{\footnotesize Lower parts on Lawvere-Tierney topologies on the effective topos}\label{figure:topology}
}
\end{figure}

Figure \ref{figure:topology} summarizes some of basic implications about the $\leq_{\rm rea}$-ordering on the Lawvere-Tierney topologies on the effective topos, where $A\to B$ means $B\leqLT A$ (or $B^{\Game\to}\leq_{\rm rea}A^{\Game\to}$).
By our results, there are no more implications in Figure \ref{figure:topology}.

\subsection{Cofinite choice}

The basic bilayer functions we have dealt with so far was of no help at all for computing a single-valued function when treated as an oracle.
However, indeed, there is a known basic bilayer function which is rather powerful when considered as an oracle.
Pitts \cite[Example 5.8]{Pit85} introduced the following basic bilayer function ${\tt Cofinite}$:
\[{\rm dom}({\tt Cofinite})=\{\ast\}\times\N,\qquad{\tt Cofinite}(\ast\mid n)=\{m\in\N:m\geq n\}.\]

Note that one-query ${\tt Cofinite}$-relative computation is the one in which countably many computations are run in parallel, a finite number of which may be wrong.
Pitts \cite{Pit85} observed that the above function yields a Lawvere-Tierney topology $\mathcal{J}$ on the effective topos such that ${\rm id}<_{\rm rea}\mathcal{J}<_{\rm rea}\neg\neg$.
Here, note that $\mathcal{J}={\tt Cofinite}^{\Game\to}$ in our terminology.
Interestingly, van Oosten \cite[Theorem 2.2]{vO14} showed that, for a total function $f\colon\N\to\N$, the topology $\mathcal{J}$ forces $f$ to be decidable if and only if $f$ is hyperarithmetic.
In other words, for a total function $f\colon\N\to\N$, $f\leqLT {\tt Cofinite}$ if and only if $f$ is hyperarithmetic.
For the basics of hyperarithmetic sets, we refer the reader to Sacks \cite{sacks2} and Chong-Yu \cite{ChYuBook}.
It is also known that ${\tt Error}_{1/\ell}\not\leqLT {\tt Cofinite}$ for any $\ell\in\N$ (see \cite[Proposition 5.11]{LvO}).

Pitts' function ${\tt Cofinite}$ also has a partial multi-valued counterpart, which has been studied as {\em cofinite choice} \cite{BG11,AK19} or {\em bound} \cite{GPV} in the context of $\N^\N$-computation.
We introduce cofinite choice relative to an oracle $\alpha\in 2^\N$ as follows:
\begin{gather*}
{\tt C}^\alpha_{\tt cof}(e)\downarrow\iff |\{n\in\N:\varphi^\alpha_e(n)\downarrow\}|\mbox{ is finite},\\
{\tt C}^\alpha_{\tt cof}(e)=\{n\in\N:\varphi^\alpha_e(n)\uparrow\}.
\end{gather*}

It is obvious that ${\tt C}^\alpha_{\tt cof}\leqLT{\tt Cofinite}$ for any oracle $\alpha\in 2^\N$.
Moreover, we also have ${\tt C}^\alpha_{\tt cof}\leq_T\alpha'$, where $\alpha'$ is the Turing jump of $\alpha$.
To estimate the strength of cofinite choice, we consider the following equivalent definition of the hyperarithmetical hierarchy based on the effective Baire hierarchy.

\begin{definition}[Effective Baire hierarchy]\label{def:eff-Baire-hie}
For each computable ordinal $\xi$, we define a set $B_\xi$ of total functions on $\N$ as follows:
First, $B_0$ is the set of all total computable functions on $\N$, and a $B_0$-code of $f\in B_0$ is a program code computing $f$.
For $\xi>0$, $B_\xi$ is the set of functions $f\colon\N\to\N$ such that $f=\lim_{s\to\infty}f_s$ for some sequence $(f_n)_{n\in\N}$, where there exists an algorithm $\Phi_f$ which, given $n\in\N$, returns a $B_\zeta$-code of $f_n\in B_\zeta$ for some $\zeta<\xi$.
A $B_\xi$-code of $f\in B_\xi$ is a pair of a code of $\xi$ and a code of such $\Phi_f$.
\end{definition}

By the Shoenfield limit lemma (see e.g.~\cite[Lemma III.3.3]{SoareBook} or \cite[Proposition IV.1.19]{OdiBook}), $B_{n}$ corresponds to $\emptyset^{(n)}$-computability for $n\in\N$, and $B_{\xi}$ corresponds to $\emptyset^{(\xi+1)}$-computability for an infinite ordinal $\xi$, where $\emptyset^{(\xi)}$ is the $\xi$-th Turing jump of a computable function.
Obviously, Definition \ref{def:eff-Baire-hie} of $f\in B_\xi$ produces a computable well-founded tree $T_f$ whose leaves are labeled by computable functions.
Here, such a $T_f$ is {\em full-splitting}, that is, if $\sigma\in T_f$ is not a leaf, then $\sigma\fr n\in T_f$ for any $n\in\N$.

Conversely, let $T\subseteq\N^{<\N}$ be a computable full-splitting well-founded tree.
We inductively define ${\rm rank}_T(\sigma)$, the rank of $\sigma$, as follows:
The rank of a leaf of $T$ is $0$, and if $\sigma\in T$ is not a leaf, then the rank of $\sigma$ is $\sup\{{\rm rank}_T(\sigma\fr n)+1:n\in\N\}$.
Then, ${\rm rank}(T)$, the rank of $T$, is defined as the rank of its root.

Let $L_T$ be the set of leaves of $T$.
A {\em computable assignment} is a computable function $h\colon L_T\times\N\to\N$.
We inductively label each node $\sigma$ of $T$ with a function $h_\sigma\colon\N\to\N$ or an undefined symbol $\uparrow$ as follows:
If $\rho$ is a leaf of $T$, then put $h_\rho(n)=f(\rho,n)$.
If $\sigma\fr s$ is labeled by $\uparrow$ for some $s\in\N$, then $h_\sigma$ is also labeled by $\uparrow$.
If $\lim_{s\to\infty}h_{\sigma\fr s}(n)$ exists for all $n\in\N$, then $\sigma$ is labeled by the pointwise limit $h_\sigma=\lim_{s\to\infty}h_{\sigma\fr s}$.
Otherwise, $\sigma$ is labeled by $\uparrow$.
Then define $h_T$ as the label of the root of $T$ if it is defined.
Observe that if $h_\sigma$ is defined, then $h_\sigma\in B_{{\rm rank}_T(\sigma)}$.
Indeed, one can compute a $B_{{\rm rank}_T(\sigma)}$-code of $h_\sigma$.

\begin{definition}
We call a pair $(T,h)$ of a computable full-splitting well-founded tree $T$ and a computable assignment $h$ a {\em blueprint}.
\end{definition}

One can easily see that $f\in B_\xi$ if and only if there exists a blueprint $(T,h)$ such that $f=h_T$ and ${\rm rank}(T)=\xi$.
We are now ready to prove the following:

\begin{prop}
For any computable ordinal $\xi$, we have ${\tt C}_{\sf cof}^{\emptyset^{(\xi)}}\equiv_T\emptyset^{(\xi+1)}$.
\end{prop}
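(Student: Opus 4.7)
The plan is to establish both directions separately; the argument is in fact uniform in the base oracle, so I will prove ${\tt C}_{\sf cof}^\alpha\equiv_T\alpha'$ for an arbitrary $\alpha\in 2^\N$ and then specialise to $\alpha=\emptyset^{(\xi)}$. As is standard (and as implicit in the paper's earlier observation that ${\tt C}_{\sf cof}^\alpha\leq_T\alpha'$), the reductions are understood relative to $\alpha$.

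The upper bound ${\tt C}_{\sf cof}^\alpha\leq_T\alpha'$ was already noted in the paper. On input $e$ with $W_e^\alpha$ finite, the oracle $\alpha'$ decides each $\Sigma^0_1(\alpha)$ statement ``$n\in W_e^\alpha$'', so one checks $n=0,1,2,\dots$ in turn and returns the least $n\notin W_e^\alpha$, which exists by the finiteness promise.

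For the lower bound $\alpha'\leq_T{\tt C}_{\sf cof}^\alpha$, I would, given $e$, use the $s$-$m$-$n$ theorem to construct an index $d$ such that $\varphi_d^\alpha(m)$ first simulates $\varphi_e^\alpha(e)$ to completion and then, upon observing a halt at stage $s$, halts iff $m<2s$. Then $W_d^\alpha=[0,2s)$ when $e\in\alpha'$ and $W_d^\alpha=\emptyset$ otherwise; in both cases $W_d^\alpha$ is finite, so $d$ is a legal query to ${\tt C}_{\sf cof}^\alpha$. Given any response $n\in\N\setminus W_d^\alpha$, simulate $\varphi_e^\alpha(e)$ for $n$ steps using the oracle $\alpha$: if $e\in\alpha'$ then $n\geq 2s>s$ and the simulation must witness the halt, while if $e\notin\alpha'$ no simulation can witness a halt. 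Output the corresponding bit.

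The main obstacle is arranging a query whose legality, i.e.\ finiteness of its enumerated set, does not itself depend on the unknown bit $\alpha'(e)$: the naive choice $\varphi_d^\alpha(m)\downarrow\iff\varphi_e^\alpha(e)[m]\uparrow$ gives $W_d^\alpha$ infinite precisely when $e\notin\alpha'$, which would force \Art to violate the rules in the negative case. The ``double the halt stage'' gadget circumvents this by deferring enumeration behind an explicit observation of a halt, so that non-halting instances enumerate nothing; once the query is legitimate, the remainder is a routine upper-bound-on-halting-stage argument, and specialising to $\alpha=\emptyset^{(\xi)}$ yields the proposition.
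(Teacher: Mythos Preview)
Your argument has a genuine gap: in this paper, $\leq_T$ is the \emph{absolute} reducibility of Section~\ref{sec:perfect-information-game}, where \Art's strategy must be a \emph{computable} function. It is not taken relative to $\alpha$. The upper bound ${\tt C}_{\sf cof}^\alpha\leq_T\alpha'$ really does hold absolutely (the $s$-$m$-$n$ transformation $(e,n)\mapsto i$ with $\alpha'(i)=[n\in W_e^\alpha]$ is computable without $\alpha$), so the paper's remark does not license relativising. Your lower-bound step ``simulate $\varphi_e^\alpha(e)$ for $n$ steps using the oracle $\alpha$'' is exactly where the argument breaks: \Ar has no access to $\alpha$, only to ${\tt C}_{\sf cof}^\alpha$, and you have not shown how to recover the needed bits of $\alpha$ from the latter.

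In fact your general claim $\alpha'\leq_T{\tt C}_{\sf cof}^\alpha$ is \emph{false} for arbitrary $\alpha$. The paper notes ${\tt C}_{\sf cof}^\alpha\leqLT{\tt Cofinite}$, and by van Oosten's theorem, only hyperarithmetic functions are $\leqLT{\tt Cofinite}$; hence if $\alpha$ is not hyperarithmetic then $\alpha'\not\leq_T{\tt C}_{\sf cof}^\alpha$. So the ``uniform in $\alpha$'' plan cannot succeed, and something specific to $\alpha=\emptyset^{(\xi)}$ is required.

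The paper's proof supplies precisely that missing ingredient. It does not try to simulate a single $\emptyset^{(\xi)}$-computation. Instead it exploits that any $f\leq_T\emptyset^{(\xi+1)}$ lies in the effective Baire class $B_{1+\xi}$: there is a computable well-founded tree whose leaves carry genuinely \emph{computable} functions, and each internal node is the pointwise limit of its children. At a non-leaf node $\sigma$, the sequence $s\mapsto h_{\sigma\fr s}(n)$ is $\emptyset^{(\xi)}$-computable \emph{with a computably obtainable index}, so \Ar can (without $\emptyset^{(\xi)}$!) form the $\emptyset^{(\xi)}$-c.e.\ index $e$ for $\{s:\exists t>s\;h_{\sigma\fr t}(n)\neq h_{\sigma\fr s}(n)\}$ and query ${\tt C}_{\sf cof}^{\emptyset^{(\xi)}}$ to find a point of stabilisation. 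Iterating down the tree, \Ar eventually reaches a leaf, where the value is computed outright. The whole strategy is computable because \Ar only ever manipulates \emph{indices}; the oracle ${\tt C}_{\sf cof}^{\emptyset^{(\xi)}}$ does all the $\emptyset^{(\xi)}$-dependent work. Your single-step ``bound the halting time'' idea is, in effect, only the top layer of this descent; what is missing is the recursion down to the computable base.
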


\begin{proof}
It suffices to show that $\emptyset^{(\xi+1)}\leq_T{\tt C}_{\sf cof}^{\emptyset^{(\xi)}}$.
As mentioned above, $\emptyset^{(\xi+1)}$ corresponds to $B_{1+\xi}$.
Therefore, it suffices to show that $f\leq_T{\tt C}_{\sf cof}^{\emptyset^{(\xi)}}$ for any $f\in B_{1+\xi}$.
Let $(T,h)$ be a blueprint defining $f$ as above.
Since $h_T=f$ is defined, $h_\sigma$ is also defined for any $\sigma\in T$ by definition.

We describe \Art's strategy for $f\leq_T{\tt C}_{\sf cof}^{\emptyset^{(\xi)}}$.
Assume that \Mer's first move is $n$, and the second and subsequent moves are $\sigma=\langle x_1,x_2,\dots,x_\ell\rangle$.
If $\sigma$ is a leaf of $T$, then \Ar declares termination with $h_\sigma(n)$.
Here, \Ar can use the information of $h_\sigma$ since $h_\sigma$ is computable.
Assume that $\sigma$ is not a leaf of $T$.
Since the rank of $T$ is $1+\xi$, the rank of $\sigma$ is at most $1+\xi$ and thus the rank of any immediate successor $\sigma\fr s$ of $\sigma$ is less than $1+\xi$.
By the property of a blueprint, given $s\in\N$, one can compute a $B_\zeta$-code of $h_{\sigma\fr s}\in B_\zeta$ for some $\zeta<1+\xi$.
In particular, $(s,n)\mapsto h_{\sigma\fr s}(n)$ is $\emptyset^{(\xi)}$-computable.

Then consider the following program $e$:
Given an input $s$, the computation searches for $t>s$ such that $h_{\sigma\fr t}(n)\not=h_{\sigma\fr s}(n)$.
If such a $t$ is found, the computation $\varphi_e^{\emptyset^{(\xi)}}(s)$ halts.
Otherwise, $\varphi_e^{\emptyset^{(\xi)}}(s)$ never halts.
Since $h_\sigma$ is defined, $\lim_{s\to\infty}h_{\sigma\fr s}(n)$ exists; that is, there exists $s$ such that $h_{\sigma\fr t}(n)=h_{\sigma\fr s}(n)$ for any $t>s$.
Hence, there are at most finitely many $s$ such that $\varphi_e^{\emptyset^{(\xi)}}(s)$ halts.
This means that $e\in{\rm dom}({\tt C}_{\tt cof}^{\emptyset^{(\xi)}})$.
Then, \Ar declares that the game is to continue, and uses $e$ as the next query, that is, $\langle 0,e\rangle$ is \Art's next move.

We claim that this is \Art's winning strategy.
To see this, if \Mer's second and subsequent moves are $\sigma=\langle x_1,x_2,\dots,x_\ell\rangle$, then we inductively show that $h_T(n)=h_\sigma(n)$.
We inductively assume that $h_T(n)=h_\sigma(n)$.
As the next move, if \Ar declares that the game is to continue, and uses $e$ as the next query, \Me responds to this with some $t_{\ell+1}\in{\tt C}_{\tt cof}^{\emptyset^{(\xi)}}(e)$.
This means that $h_{\sigma\fr t_{\ell+1}}(n)=\lim_{s\to\infty}h_{\sigma\fr s}(n)=h_\sigma(n)$.
Hence, by the induction hypothesis, we have $h_{\sigma\fr t_{\ell+1}}(n)=h_T(n)$.

If the history of \Mer's moves $\rho=\langle x_1,x_2,\dots,x_k\rangle$ reaches a leaf of $T$, then \Ar declares termination with $h_\rho(n)$, and by the above property, we obtain $h_\rho(n)=h_T(n)=f(n)$.
Hence, the procedure described above is shown to be \Art's winning strategy.
Consequently, we get $f\leq_T{\tt C}_{\tt cof}^{\emptyset^{(\xi)}}$.
\end{proof}

As ${\tt C}_{\tt cof}^\alpha\leqLT{\tt Cofinite}$ for any oracle $\alpha\in 2^\N$, this explains the reason why ${\tt Cofinite}$ is so powerful as an oracle.
Interestingly, by the result in van Oosten \cite[Theorem 2.2]{vO14} mentioned above, one can observe that even if relativized by a tremendously powerful oracle $\alpha$, ${\tt C}_{\tt cof}^\alpha$ never be able to compute a non-hyperarithmetic function; that is, for any non-hyperarithmetic $f$, we have $f\not\leq_T{\tt C}_{\tt cof}^\alpha$ no matter what an oracle $\alpha$ is.
Roughly speaking, this is because a computational process beyond hyperarithmetic is not a finite approximation process, but an approximation process along an ordinal, which prevents us from using ``time trick''; see e.g.~\cite{BGM21}.

\subsection{Asymptotic density}

As a candidate for another basic bilayer function, one that uses asymptotic density may come to mind.
For a set $A\subseteq\N$, the {\em lower asymptotic density} of $A$ is defined by 
\[\underline{d}(A)=\liminf_{n\to\infty}\frac{|A\cap n|}{n}.\]

Then, for any real $\ep\in[0,1]$, we define the basic bilayer function ${\tt DenError}_{\ep}$ as follows:
\begin{gather*}
{\rm dom}({\tt DenError}_\ep)=\{(\ast\mid A):A\subseteq\N\mbox{ and }\underline{d}(A)\geq 1-\ep\}\\
{\tt DenError}_\ep(\ast\mid A)=A.
\end{gather*}

Obviously, we have ${\tt Cofinite}\leqoLT {\tt DenError}_\ep$ for any real $\ep\in[0,1]$ since the asymptotic density of a cofinite set is $1$.
The major difference between ${\tt Cofinite}$ and ${\tt DenError}_\ep$ is the following property.

\begin{obs}\label{obs:err-vs-denerr}
${\tt Error}_{1/\ell}\leqoLT {\tt DenError}_{1/\ell}$ for any $\ell\in\N$.
\end{obs}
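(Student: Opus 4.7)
The plan is to exhibit a one-query reduction explicitly by using residue classes modulo $\ell$, exploiting the fact that a single residue class has density exactly $1/\ell$, which matches the error budget of ${\tt DenError}_{1/\ell}$. Since the public domains of both bilayer functions are singletons $\{\ast\}$, the inner reduction $H(\ast)=\ast$ is trivial, and all the real work is carried by the secret inner reduction $L$ and the outer reduction $K$.

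Concretely, I would define
\[L(\ast,\{a\})=\{m\in\N:m\not\equiv a\pmod{\ell}\},\qquad K(\ast,m)=m\bmod\ell,\]
where $a<\ell$ is the unique element of a given secret input $\{a\}$ of ${\tt Error}_{1/\ell}$. Both $H$ and $K$ are clearly computable, while $L$ is allowed to be arbitrary.

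First I would verify that $(\ast\mid L(\ast,\{a\}))\in{\rm dom}({\tt DenError}_{1/\ell})$: for each $n$, the number of $m<n$ with $m\equiv a\pmod{\ell}$ is at most $\lceil n/\ell\rceil\leq n/\ell+1$, hence $|L(\ast,\{a\})\cap n|/n\geq 1-1/\ell-1/n$, so $\underline{d}(L(\ast,\{a\}))\geq 1-1/\ell$. Next, if $m\in{\tt DenError}_{1/\ell}(\ast\mid L(\ast,\{a\}))=L(\ast,\{a\})$, then by construction $m\bmod\ell\neq a$, so $K(\ast,m)\in\{0,\dots,\ell-1\}\setminus\{a\}={\tt Error}_{1/\ell}(\ast\mid\{a\})$. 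This shows that $\langle H,K,L\rangle$ is a one-query reduction witnessing ${\tt Error}_{1/\ell}\leqoLT{\tt DenError}_{1/\ell}$.

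There is no real obstacle here: the argument is essentially an encoding remark, observing that the unique residue class one is permitted to omit from a subset of $\N$ of lower density at least $1-1/\ell$ can encode which of the $\ell$ possible values the adversary has selected as the forbidden answer for ${\tt Error}_{1/\ell}$.
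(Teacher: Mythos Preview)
Your proof is correct and essentially identical to the paper's: both use residues modulo $\ell$, defining the outer reduction as $m\mapsto m\bmod\ell$ and the secret inner reduction as $\{a\}\mapsto\{m:m\not\equiv a\pmod\ell\}$ (the paper writes these as $K(n\ell+m)=m$ and $L(\{j\})=\{n\ell+m:j\neq m<\ell\}$, which amounts to the same thing). Your version is slightly more explicit in verifying the lower-density bound, which the paper simply asserts.
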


\begin{proof}
We define an outer reduction $K$ as follows:
For $n\in\N$ and $m<\ell$, put $K(n\ell+m)=m$.
Then, for an input $(\ast\mid\{j\})$ for ${\tt Error}_{1/\ell}$, a secret inner reduction $L$ is defined by $L(\{j\})=\{n\ell+m:n\in\N\mbox{ and }j\not=m<\ell\}$.
Note that the asymptotic density of $L(\{j\})$ is $1-1/\ell$.
Clearly, $y\in {\tt DenError}_{1/\ell}(L(\{j\}))=L(\{j\})$ implies $K(y)\not=j$; hence $K(y)\in{\tt Error}_{1/\ell}(\ast\mid\{j\})$.
\end{proof}

We say that a partial multifunction ${\tt P}\pcolon\N\tto\N$ is hyperarithmetic if there exists a partial $\Pi^1_1$ function $f\pcolon\N\to\N$ such that $f(n)\in{\tt P}(n)$ for any $n\in{\rm dom}({\tt P})$.

\begin{prop}\label{prop:denerror-hyp}
Let ${\tt P}$ be a partial multifunction whose codomain is $\ell\in\N$ with $\ell>0$.
For any $\ep<1/(\ell+1)$, if ${\tt P}\leqLT {\tt DenError}_\ep$, then ${\tt P}$ is hyperarithmetic.
\end{prop}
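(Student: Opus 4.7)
The plan is to analyze the reduction game via a monotone positive arithmetic inductive definition and apply a density pigeonhole on intersections of derivation trees. Fix a winning \Art-\Ni strategy $(\tau,\eta)$ witnessing ${\tt P}\leqLT{\tt DenError}_\ep$. Since every winning termination declaration lies in ${\tt P}(n)\subseteq\ell$, I may computably modify $\tau$ so that it only ever declares termination with a value in $\ell$ (rewriting any out-of-$\ell$ termination to $0$ does not change the behaviour along $T^\eta_n$, so $(\tau,\eta)$ remains winning). For each $n\in\dom({\tt P})$ and $v\in\ell$, call a finite history $\sigma\in\N^{<\N}$ of Merlin's responses \emph{$v$-safe} (for $n$) if either (a) $\tau(n,\sigma)$ declares termination with a value different from $v$, or (b) $\tau(n,\sigma)$ makes an oracle query and $\underline d(\{y\in\N:\sigma\fr y\text{ is $v$-safe}\})\geq 1-\ep$. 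This is a monotone positive arithmetic inductive definition, so by Kleene's theorem its least fixed point is $\Pi^1_1$ uniformly in $(n,v,\sigma)$; intuitively, $v$-safety at $\sigma$ says that Nimue can force, against any Merlin strategy, Arthur to never declare $v$ along any continuation of the play from $\sigma$.

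For each $v\notin{\tt P}(n)$, the winning Nimue strategy $\eta$ directly witnesses that the root $\langle\rangle$ is $v$-safe: the tree $T^\eta_n$ of Merlin's $\eta$-valid plays is well-founded, each leaf is Arthur-terminal with declaration in ${\tt P}(n)\not\ni v$, and each internal node $\sigma$ branches into $\eta(\sigma)$, which has lower density $\geq 1-\ep$ by Nimue's rule. Hence
\[
\hat V(n)\;:=\;\{v\in\ell:\langle\rangle\text{ is not $v$-safe for }n\}\;\subseteq\;{\tt P}(n).
\]
The core step is to show $\hat V(n)\neq\emptyset$. Assume, toward a contradiction, that for every $v\in\ell$ there is a $v$-safe derivation tree $S_v\subseteq\N^{<\N}$ rooted at $\langle\rangle$, and form $S:=\bigcap_{v\in\ell}S_v$. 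Then $S$ is nonempty and, as a subtree of each well-founded $S_v$, itself well-founded. At an Arthur-terminal $\sigma\in S$, the definition of each $S_v$ forces $\tau(n,\sigma)\neq v$ for every $v\in\ell$, contradicting $\tau(n,\sigma)\in\ell$. At an Arthur-continuing $\sigma\in S$, the children of $\sigma$ in $S$ form $\bigcap_{v\in\ell}A_v(\sigma)$, where $A_v(\sigma):=\{y:\sigma\fr y\in S_v\}$ satisfies $\underline d(A_v(\sigma))\geq 1-\ep$; subadditivity of upper density then gives
\[
\underline d\Big(\bigcap_{v\in\ell}A_v(\sigma)\Big)\;\geq\;1-\sum_{v\in\ell}\overline d(\N\setminus A_v(\sigma))\;\geq\;1-\ell\ep\;>\;1-\tfrac{\ell}{\ell+1}\;=\;\tfrac{1}{\ell+1}\;>\;0,
\]
so $\sigma$ has children in $S$; therefore $S$ is a nonempty well-founded tree with no leaves, which is absurd.

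Since $\hat V(n)$ is a nonempty $\Sigma^1_1$-definable subset of the finite set $\ell$ for every $n\in\dom({\tt P})$, standard hyperarithmetic selection (exploiting finiteness of $\ell$) yields a partial $\Pi^1_1$ function $f$ with $f(n)\in\hat V(n)\subseteq{\tt P}(n)$, certifying that ${\tt P}$ is hyperarithmetic. The delicate point is the pigeonhole in the contradiction step: subadditivity of upper density bounds $\underline d(\bigcap_{v}A_v(\sigma))$ from below by $1-\ell\ep$, and the hypothesis $\ep<1/(\ell+1)$ is precisely what keeps this bound strictly positive, preventing the intersection tree from running out of branches at a non-terminal node.
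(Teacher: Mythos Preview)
Your combinatorial core is correct and in some ways cleaner than the paper's: the density subadditivity bound $\underline d(\bigcap_v A_v)\geq 1-\ell\ep>0$ replaces the paper's fat-tree pigeonhole (Lemma~\ref{lem:fat-tree}), and your ``no leaves in a nonempty well-founded tree'' contradiction is perfectly sound. The claim $\hat V(n)\subseteq{\tt P}(n)$ is also fine.

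The gap is in the last step. Your set $\hat V(n)=\{v:\text{root not $v$-safe}\}$ is $\Sigma^1_1$, not $\Pi^1_1$, and there is no ``standard hyperarithmetic selection'' producing a partial $\Pi^1_1$ (or $\Delta^1_1$) function from a nonempty $\Sigma^1_1$ subset of a finite set. Concretely: for $\ell=2$, a selector $f$ with $f(n)\in\hat V(n)$ would satisfy $W_1\subseteq f^{-1}(0)$ and $W_0\cap f^{-1}(0)=\emptyset$, where $W_v=\{n:v\text{-safe}\}$ are $\Pi^1_1$ with $W_0\cap W_1=\emptyset$; so $f^{-1}(0)$ would $\Delta^1_1$-separate $W_1$ from $W_0$. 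But $\Pi^1_1$ has the reduction property and hence \emph{fails} separation, so disjoint $\Pi^1_1$ sets need not be $\Delta^1_1$-separable. Thus the selection you invoke does not exist in general, and nothing in your argument rules out this obstruction for the particular $W_v$ arising from the inductive definition.

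The paper avoids this by building a $\Pi^1_1$ relation from the outset: it defines $Q(n,j)\iff(\forall f)(\exists g\geq f)\,\Psi^g(n){\downarrow}=j$, where $\Psi^g$ searches for a $g$-fat finite tree on whose leaves $\Phi^n_\tau$ is constant with value $j$. The outer universal function-quantifier makes $Q$ genuinely $\Pi^1_1$, and then the standard $\Delta^1_1$-selection (Kreisel uniformization for $\Pi^1_1$ relations) applies. Your argument can likely be repaired along similar lines --- e.g., by replacing ``not $v$-safe'' with a $\Pi^1_1$ predicate quantifying universally over all putative derivation-tree systems for the \emph{other} values --- but as written the selection step does not go through.
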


To prove this, we need to show an auxiliary lemma.
For a tree $T\subseteq\N^{<\N}$, let ${\rm succ}_T(t)$ be the set of all immediate successors of $t$ in $T$.
For a function $b\colon\N^{<\N}\to\N$, a tree $T$ is {\em $b$-fat} if, for any $t\in T$ which is not a leaf, $t$ has at least $b(t)$ immediate successors, i.e., $|{\rm succ}_T(t)|\geq b(t)$.
For a function $b\colon\N^{<\N}\to\N$ and $\ell\in\N$, consider the function $\ell\cdot b\colon\sigma\mapsto\ell\cdot b(\sigma)$.
The following is an analogue of Cenzer-Hinman \cite[Proposition 2.9]{CH08}.

\begin{lemma}\label{lem:fat-tree}
Let $b\colon\N^{<\N}\to\N$ be a function, $T$ be an $(\ell\cdot b)$-fat finite tree, and $L_T$ be the set of all leaves of $T$.
Then, for any function $f\colon L_T\to\ell$ there exists a $b$-fat tree $S\subseteq T$ such that $f$ is constant on the leaves of $S$.
\end{lemma}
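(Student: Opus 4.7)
The plan is to prove this by induction on the height of the finite tree $T$. In the base case where $T$ consists of just the root, this root is its own leaf, the $(\ell\cdot b)$-fat condition on non-leaves is vacuous, and we can take $S = T$; the function $f$ is trivially constant on the single leaf.

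For the inductive step, consider the root $\emptyset$ of $T$, which by hypothesis has at least $\ell\cdot b(\emptyset)$ immediate successors in ${\rm succ}_T(\emptyset)$. For each such $t$, I define the shifted subtree $T_t = \{\sigma \in \N^{<\N} : t\fr\sigma \in T\}$, the shifted branching bound $b_t(\sigma) = b(t\fr\sigma)$, and the restricted coloring $f_t \colon L_{T_t} \to \ell$ given by $f_t(\sigma) = f(t\fr\sigma)$. Then $T_t$ is $(\ell\cdot b_t)$-fat and has strictly smaller height, so the induction hypothesis, applied to $(T_t, b_t, f_t)$, produces a $b_t$-fat subtree $S_t \subseteq T_t$ on whose leaves $f_t$ takes a single value $c_t \in \ell$.

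Next, I would apply the pigeonhole principle to the coloring $t \mapsto c_t$ on ${\rm succ}_T(\emptyset)$. Since at least $\ell\cdot b(\emptyset)$ successors are distributed among $\ell$ colors, some color $c$ is attained on a subset $U \subseteq {\rm succ}_T(\emptyset)$ with $|U| \geq b(\emptyset)$. Setting $S = \{\emptyset\} \cup \{t\fr\sigma : t \in U,\ \sigma \in S_t\}$, the root of $S$ has $|U| \geq b(\emptyset)$ immediate successors, while each subtree hanging off a node $t \in U$ is $b_t$-fat by construction; hence every non-leaf node of $S$ has at least $b$ immediate successors, so $S$ is $b$-fat. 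Finally, every leaf of $S$ has the form $t\fr\sigma$ with $t \in U$ and $\sigma \in L_{S_t}$, and on such a leaf $f(t\fr\sigma) = f_t(\sigma) = c_t = c$, so $f$ is constant on the leaves of $S$ as required.

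I do not expect any genuine obstacle: the argument is a straightforward recursion combined with pigeonhole, a direct generalization of Cenzer--Hinman's constant-branching argument to a variable branching bound $b\colon\N^{<\N}\to\N$. The only point requiring minor care is that $b$ varies from node to node, which is handled cleanly by shifting to $b_t$ along each branch; the choice of the factor $\ell$ in the hypothesis exactly matches the number of colors needed for the single application of pigeonhole at the root, and no stronger hypothesis is required because the surviving subtrees $S_t$ are already guaranteed to be $b_t$-fat by the induction.
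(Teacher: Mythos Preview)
Your proof is correct and uses the same core idea as the paper---induction on the height of $T$ together with the pigeonhole principle---only organized in the opposite direction: you decompose at the root and apply the induction hypothesis to each subtree $T_t$, then pigeonhole on the resulting colors $c_t$, whereas the paper first applies pigeonhole at the deepest level to define a coloring $g$ on nodes of length $k-1$, and then invokes the induction hypothesis on the truncated tree $T\cap\N^{<k}$. The two arguments are mirror images of each other, and your version has the minor advantage of not needing the preliminary reduction to trees whose leaves all have the same length.
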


\begin{proof}
Since $T$ is finite, one can assume that any $\sigma\in L_T$ has the same length.
We prove the assertion by induction on the height $k$ of $T$.
Let $f\colon L_T\to\ell$ be given.
If $k=1$, then $T$ has $\ell\cdot b(\ep)$ leaves, where $\ep$ is the empty strings.
As $f$ is $\ell$-valued, there are at least $b(\ep)$ leaves on which $f$ is constant.

Next, assume that $k>1$.
Then, for any $t\in T$ of length $k-1$, define $g(t)$ as the least $j<\ell$ such that $|\{n\in{\rm succ}_T(t):f(t\fr n)=j\})|\geq b(t)$.
Since $T$ is $(\ell\cdot b)$-fat, we have $|\{n:f(t\fr n)<\ell\}\}|\geq \ell\cdot b(t)$, so such a $j$ exists.
Note that $g$ is a function from $T\cap \N^{k-1}$ to $\ell$.
By the induction hypotheis, there exists a $b$-fat tree $S^-\subseteq T\cap \N^{<k}$ such that $g$ is constant on the leaves of $S^-$.
Let $j<\ell$ be the unique value of $g$ on the leaves $L_S^-$ of $S^-$.
Note that $L_S^-\subseteq T$.
Then, define
\[S=S^-\cup\{t\fr n:t\in L_S^-\;\land\;n\in{\rm succ}_T(t)\;\land\;f(t\fr n)=j\}\]

By definition, clearly $f$ is constant on the leaves of $S$.
Moreover, for any $t\in L_S^-$, by our definition of $g$ and $S^-$, we have 
\[|{\rm succ}_S(t)|=|\{n\in{\rm succ}_T(t):f(t\fr n)=j\}|\geq b(t).\]

Therefore, $S$ is $b$-fat.
This concludes the proof.
\end{proof}

\begin{proof}[Proof of Proposition \ref{prop:denerror-hyp}]
Let $(\tau\mid\eta)$ be a winning \Art-\Ni strategy witnessing ${\tt P}\leqLT {\tt DenError}_\ep$.
Except for the first move $n$, \Mer's move is always a number $j\in\N$, which yields the tree $\N^{<\N }$ of all possible moves by \Mer.
Fix $n$, and then \Art's strategy $\tau$ yields a partial computable function $\Phi^n_\tau\pcolon\N^{<\N }\to\N $, where $\Phi^n_\tau(\sigma)\downarrow=u$ if and only if, after reading \Mer's moves $\sigma$, \Art's strategy $\tau$ declares termination with $u$.
\Nim's strategy $\eta$ restricts \Mer's possible moves to a well-founded subtree $T^n_\eta$ of $\N^{<\N}$ such that, for each $\sigma\in T^n_\eta$, if $\sigma$ is a leaf then $\Phi^n_\tau(\sigma)$ is defined, and the lower asymptotic density of the set $A^n_\eta(\sigma):={\rm succ}_{T^n_\eta}(\sigma)$ is at least $1-\ep$ since \Ni obeys the rule as long as \Me obeys the rule, and this value is greater than $1-1/(\ell+1)$ since $\ep<1/(\ell+1)$.
By the definition of lower asymptotic density, there exists $b(\sigma)$ such that, for any $m\geq b(\sigma)$, we have $|A^n_\eta(\sigma)\cap m|/m>1-1/(\ell+1)$.
In particular, we have
\[|A^n_\eta(\sigma)\cap (\ell+1)\cdot b(\sigma)|>(\ell+1)\cdot b(\sigma)\cdot \left(1-\frac{1}{\ell+1}\right)=\ell\cdot b(\sigma).\]

Fix such $b\colon\N^{<\N}\to\N$, where if $\sigma$ is either a leaf of $T^n_\eta$ or $\sigma\not\in T^n_\eta$, then $b(\sigma)$ is arbitrary.
Then, consider the tree $T^{b}$ of $((\ell+1)\cdot b)$-bounded strings:
\[T^{b}=\{t\in\N^{<\N}:(\forall s<|t|)\;t(s)<(\ell+1)\cdot b(t\upto s)\}.\]

Since $T^b$ is finite branching and $T^n_\eta$ is well-founded, $T^{n,b}_\eta:=T^b\cap T^n_\eta$ is finite by K\"onig's lemma.
Moreover, $\Phi_\tau^n$ is total on the leaves of $T^{n,b}_\eta$.
Note that if $\sigma$ is not a leaf of $T^{n,b}_\eta$ then ${\rm succ}_{T^{n,b}_\eta}(\sigma)=A^n_\eta(\sigma)\cap (\ell+1)\cdot b(\sigma)$.
Hence, $T_\eta^{n,b}$ is $(\ell\cdot b)$-fat.
Since $\Phi_\tau^n$ is $\ell$-valued, by Lemma \ref{lem:fat-tree}, there exists a $b$-fat subtree $S$ of $T^{n,b}_\eta$ of the same height such that $\Phi^n_\tau$ is constant on the leaves of $S$.
Hereafter, we write $b$ as $b_n$ since such a $b$ satisfying the above density condition is depend on \Mer's first move $n$.

For a function $g\colon\N^{<\N}\to\N$,
our algorithm $\Psi^{g}(n)$ searches for a $g$-fat finite tree $S\subseteq T^{g}$ such that $\Phi^n_\tau$ is constant on the leaves of $S$, and returns the unique value $j$ of $\Phi_\tau^n$ on the leaves of $S$ if such an $S$ exists.
Note that if $g(\sigma)$ is a correct witness for the above density condition for $A^n_\eta(\sigma)$, then such an $S$ always exists.
Now, we define $Q$ as follows
\[Q(n,j)\iff (\forall f\colon\N^{<\N}\to\N)(\exists g\geq f)\;[\Psi^g(n)\downarrow=j],\]
where by $f\leq g$ we mean that $f(\sigma)\leq g(\sigma)$ for any $\sigma\in\N^{<\N}$.
Note that $\Psi^g(n)$ is defined only if $\Psi^g(n)$ succeeds in finding $S$, which means that the algorithm $\Psi$ only reads $g$ up to the height of $S$.
Thus, the above predicate $Q$ is $\Pi^1_1$.

We claim that $Q(n,j)$ implies $j\in{\tt P}(n)$.
To see this, put $f(\sigma)=b_n(\sigma)$.
If $g\geq f$ then we have $|A^n_\eta(\sigma)\cap(\ell+1)\cdot g(\sigma)|>\ell\cdot g(\sigma)$ by our choice of $b_n$.
If $\Psi^g(n)\downarrow=j$ then the algorithm $\Psi$ succeeds in finding a $g$-fat finite tree $S\subseteq T^{g}$ such that $\Phi^n_\tau(\rho)=j$ for any leaf $\rho$ of $S$.
Note that ${\rm succ}_S(\sigma)\subseteq(\ell+1)\cdot g(\sigma)$ by the definition of $T^g$, and $|{\rm succ}_S(\sigma)|\geq g(\sigma)$ since $S$ is $g$-fat.
This implies that ${\rm succ}_S(\sigma)\cap A_\eta^n(\sigma)$ is nonempty.
Hence, $S\cap T_\eta^n$ has a common leaf $\rho$.
Since $(\tau\mid\eta)$ is winning, and $\rho$ is a leaf of $T_\eta^n$, we must have $\Phi_\tau^n(\rho)\in{\tt P}(n)$.
By our choice of $S$, $\Phi_\tau^n$ is constant on the leaves of $S$, and thus $\Psi^g(n)$ must be equal to $\Phi_\tau^n(\rho)$.
This concludes $\Psi^g(n)=j\in{\tt P}(n)$.

We next claim that for any $n$ there exists $j<\ell$ such that $Q(n,j)$.
Otherwise, for any $j<\ell$ there exists $f_j$ such that either $\Psi^g(n)$ is undefined or $\Psi^g(n)\not=j$ for any $g\geq f_j$.
Then, put $h(\sigma)=\max\{b_n(\sigma),f_j(\sigma):j<\ell\}$.
As $h(\sigma)\geq b_n(\sigma)$, clearly, $h(\sigma)$ is a correct witness for the above density condition for $A^n_\eta(\sigma)$.
Then, by the argument using Lemma \ref{lem:fat-tree} described above, the algorithm $\Psi$ succeeds in finding $S$, so that $\Psi^h(n)\downarrow=j$ for some $j<\ell$.
However, as $h\geq f_j$, this contradicts our assumption on $f_j$.
Therefore, $Q$ determines a total relation.
Since $Q$ is $\Pi^1_1$, by $\Delta^1_1$-selection (see Moschovakis \cite[4B.5]{MosBook} or Sacks \cite[Theorem II.2.3]{sacks2}), there exists a hyperarithmetic function $p\colon\N\to\N$ such that $Q(n,p(n))$ holds.
By the first claim, this implies that $p(n)\in{\tt P}(n)$.
\end{proof}

In particular, ${\tt DenError}_\ep$ and ${\tt Cofinite}$ have partly the same properties in the following sense.

\begin{cor}
Assume $\ep<1/2$.
Then, for a function $f\colon\N\to\N$, $f\leqLT {\tt DenError}_\ep$ if and only if $f$ is hyperarithmetic.
\end{cor}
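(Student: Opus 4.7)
The plan is to prove both implications. For $(\Leftarrow)$, every cofinite set has density $1\geq 1-\ep$, so mapping the public input $n$ to the secret input $\{m\in\N:m\geq n\}$ witnesses ${\tt Cofinite}\leqoLT{\tt DenError}_\ep$. Combined with the cited result of van Oosten \cite[Theorem 2.2]{vO14} that every hyperarithmetic $f\colon\N\to\N$ satisfies $f\leqLT{\tt Cofinite}$, this yields $f\leqLT{\tt DenError}_\ep$.

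For $(\Rightarrow)$, I would adapt the proof of Proposition \ref{prop:denerror-hyp} to exploit that $f$ is single-valued, which lets us bypass Lemma \ref{lem:fat-tree} and work with the factor $2$ in place of the slacker factor $\ell+1=3$. Fix a winning \Art-\Ni strategy $(\tau\mid\eta)$ witnessing $f\leqLT{\tt DenError}_\ep$, and let $T^n_\eta$ be the rule-following tree and $A^n_\eta(\sigma)$ the set of legal \Mer-responses at $\sigma$, as in the proof of Proposition \ref{prop:denerror-hyp}. Since $f$ is single-valued and $(\tau\mid\eta)$ is winning, $\Phi^n_\tau(\rho)=f(n)$ at every leaf $\rho$ of $T^n_\eta$. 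The hypothesis $\ep<1/2$ gives $\underline{d}(A^n_\eta(\sigma))\geq 1-\ep>1/2$, so there is a threshold function $b_n\colon\N^{<\N}\to\N$ with $|A^n_\eta(\sigma)\cap 2m|>m$ whenever $m\geq b_n(\sigma)$; equivalently, the complement of $A^n_\eta(\sigma)$ in $\{0,\dots,2m-1\}$ has strictly fewer than $m$ elements.

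Let $T^g:=\{t\in\N^{<\N}:t(s)<2g(t\upto s)\text{ for every }s<|t|\}$, and define the oracle algorithm $\Psi^g(n)$ to search by brute force for a finite $g$-fat subtree $S\subseteq T^g$ on which $\Phi^n_\tau$ is defined and takes a constant value $j$, and to output $j$. For $g\geq b_n$ pointwise, $S:=T^g\cap T^n_\eta$ witnesses existence: it is finite by K\"onig's lemma applied to the well-founded $T^n_\eta$ intersected with the finite-branching $T^g$, it is $g$-fat by the density estimate, and its leaves coincide with those leaves of $T^n_\eta$ that lie in $T^g$, on which $\Phi^n_\tau=f(n)$. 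Conversely, any $g$-fat $S\subseteq T^g$ with $\Phi^n_\tau$ defined and constant $j$ on leaves must have $j=f(n)$: starting from the root, which lies in $T^n_\eta$, one inductively extends along $S\cap T^n_\eta$ using that $|{\rm succ}_S(\sigma)|\geq g(\sigma)$ strictly exceeds the size of the complement of $A^n_\eta(\sigma)$ in $\{0,\dots,2g(\sigma)-1\}$, so the path reaches a common leaf of $S$ and $T^n_\eta$, at which $\Phi^n_\tau(\sigma)=f(n)=j$. The predicate $Q(n,j)$ asserting $\forall f'\colon\N^{<\N}\to\N\,\exists g\geq f'\,[\Psi^g(n)\downarrow=j]$ is therefore $\Pi^1_1$ and equivalent to $j=f(n)$; since $Q$ is total, $\Delta^1_1$-selection (Moschovakis \cite[4B.5]{MosBook}, Sacks \cite[Theorem II.2.3]{sacks2}) yields a hyperarithmetic function computing $f$.

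The main subtlety is the uniqueness step: one must verify that the inductive path through $S\cap T^n_\eta$ cannot terminate at a node $\sigma$ that is a leaf of $S$ but only an interior node of $T^n_\eta$. This relies on $\tau$ being undefined on proper extensions of its termination points---a standard well-behavedness convention arranged without loss of generality---so that $\Phi^n_\tau(\sigma)$ being defined (because $\sigma$ is a leaf of $S$) combined with $\sigma\in T^n_\eta$ forces $\sigma$ to be a leaf of $T^n_\eta$. With this in place, the factor-of-two density calculation gives the sharper $\ep<1/2$ bound that Proposition \ref{prop:denerror-hyp} with $\ell=2$ and its $\ep<1/3$ threshold does not directly supply.
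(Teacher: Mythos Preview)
Your proof is correct and matches the paper's intended argument. The paper states this corollary without proof, immediately after Proposition~\ref{prop:denerror-hyp}, as an ``in particular'' consequence; the implicit derivation is exactly the adaptation you carry out: for a single-valued $f$, the value $\Phi^n_\tau$ is constant ($=f(n)$) on the leaves of $T^n_\eta$, so Lemma~\ref{lem:fat-tree} becomes trivial (the whole tree $T^{n,b}_\eta$ already works as the constant subtree), and the factor $\ell+1$ in the density estimate can be taken to be $2$ rather than $3$, yielding the sharper threshold $\ep<1/2$. Your handling of the backward direction via ${\tt Cofinite}\leqoLT{\tt DenError}_\ep$ and van Oosten's theorem is also the intended route.

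One small remark on your ``main subtlety'': the convention that $\Phi^n_\tau(\sigma)\downarrow$ with $\sigma\in T^n_\eta$ forces $\sigma$ to be a leaf of $T^n_\eta$ is indeed the one adopted in the paper (it is made explicit in the proof of Proposition~\ref{prop:llpo-error}: ``after \Ar declares termination, \Ni makes no further moves''), so you are right to invoke it, and it closes the gap cleanly.
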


Proposition \ref{prop:denerror-hyp} also shows that ${\tt LLPO}_{1/\ell}^\alpha\not\leqLT {\tt DenError}_{1/(\ell+2)}$ for a sufficiently powerful oracle $\alpha$.
By Observation \ref{obs:llpo-from-error-pos}, this implies that ${\tt Error}_{1/\ell}\not\leqLT {\tt DenError}_{1/(\ell+2)}$.
Now it is natural to ask whether ${\tt Error}_{1/\ell}\leqLT {\tt DenError}_{1/(\ell+1)}$ or not.
One can answer this question by introducing the concept of hyperarithmetical reducibility for bilayer functions.
First consider the one-query version.

\begin{definition}
Let $f$ and $g$ be bilayer functions.
We say that {\em $f$ is a one-query hyperarithmetically LT-reducible to $g$} (written $f\leq_{hLT}^1g$) if there exist partial $\Pi^1_1$ functions $H$ and $K$ and a partial function $L$ such that for any $(n\mid c)$ and $m$,
\[m\in g(H(n)\mid L(n,c))\implies K(n,m)\in f(n\mid c).\]
\end{definition}

Let $\psi_e\pcolon\N\to\N$ be the $e$th patrial $\Pi^1_1$ function (given by the canonical enumeration of all $\Pi^1_1$ sets).
Then, consider the following partial multifunction:
\begin{align*}
{\rm dom}(\Pi^1_1\mbox{-}{\tt LLPO}_{m/k})&=\{e\in\N:|\{j<k:\psi_e(j)\downarrow\}|\leq m\},\\
\Pi^1_1\mbox{-}{\tt LLPO}_{m/k}(e)&=\{0,\dots,k-1\}\setminus \{j<k:\psi_e(j)\downarrow\}.
\end{align*}

It is well-known that $\Pi^1_1$ is higher analogue of computable enumerability, i.e., a set is $\Pi^1_1$ if and only if there exists a hyperarithmetical enumeration procedure along a computable ordinal; see e.g.~\cite{sacks2,MosBook}.
The following is a hyperarithmetical analogue of Proposition \ref{prop:llpo-vs-error-pos}:

\begin{prop}\label{prop:pillpo-vs-error-pos}
$\Pi^1_1$-${\tt LLPO}_{m/k}\leq_{hLT}^1{\tt Error}_{m/k+1}$.
\end{prop}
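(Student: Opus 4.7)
The plan is to mimic the proof of Proposition \ref{prop:llpo-vs-error-pos} almost verbatim, replacing $\varphi_e$ with $\psi_e$ throughout, and replacing "computable" with "partial $\Pi^1_1$" wherever a reduction is constructed. The point is that $\Pi^1_1$ serves as the higher analogue of $\Sigma^0_1$, so the c.e.-style "wait and enumerate" step of the original proof translates to a hyperarithmetic enumeration along a computable ordinal, which still yields a partial $\Pi^1_1$ function.

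Concretely, take $H$ to be the identity (hence $\Pi^1_1$). Define the secret inner reduction $L$ exactly as before:
\[
L(e)=
\begin{cases}
\{j<k\colon\psi_e(j)\downarrow\}&\mbox{ if }|\{j<k\colon\psi_e(j)\downarrow\}|=m,\\
\{j<k\colon\psi_e(j)\downarrow\}\cup\{k\}&\mbox{ if }|\{j<k\colon\psi_e(j)\downarrow\}|<m.
\end{cases}
\]
Since $L$ is only a secret inner reduction, no effectivity is demanded of it; and the domain condition on $\Pi^1_1\mbox{-}{\tt LLPO}_{m/k}$ ensures $|L(e)|=m$, so $(\ast\mid L(e))\in{\rm dom}({\tt Error}_{m/k+1})$. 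For the outer reduction $K$, set $K(e,j)=j$ for $j<k$, and define $K(e,k)$ by
\[
K(e,k)=\ell \iff \ell<k \ \land\ \exists j_1<\cdots<j_m<k\ \bigl[\ell\notin\{j_1,\dots,j_m\}\ \land\ \bigwedge_{i\leq m}\psi_e(j_i)\downarrow\bigr].
\]
The right-hand side is $\Pi^1_1$, since each $\psi_e(j_i)\downarrow$ is $\Pi^1_1$ and the quantifier over the $j_i$'s is bounded by $k$; moreover, under the domain hypothesis $|\{j<k\colon\psi_e(j)\downarrow\}|\leq m$, this relation is single-valued in $\ell$ (it picks out the unique $\ell<k$ with $\psi_e(\ell)\uparrow$ when $|\{j<k\colon\psi_e(j)\downarrow\}|=m$, and is empty otherwise). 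Thus $K$ is a genuine partial $\Pi^1_1$ function.

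The verification that $\langle H,K,L\rangle$ witnesses the reduction is identical to the verification in Proposition \ref{prop:llpo-vs-error-pos}: if $a\in{\tt Error}_{m/k+1}(\ast\mid L(e))$ with $a<k$, then $K(e,a)=a\notin L(e)\supseteq\{j<k\colon\psi_e(j)\downarrow\}$, so $\psi_e(a)\uparrow$; and if $a=k$, then by construction $L(e)=\{j<k\colon\psi_e(j)\downarrow\}$ has cardinality exactly $m$, so $K(e,k)$ outputs some $\ell<k$ with $\psi_e(\ell)\uparrow$. Either way $K(e,a)\in\Pi^1_1\mbox{-}{\tt LLPO}_{m/k}(e)$.

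The main thing to be careful about is formalising $K$ as a partial $\Pi^1_1$ function; the rephrasing via the $\Pi^1_1$ relation displayed above avoids the naive expression "the least $\ell<k$ with $\psi_e(\ell)\uparrow$", which would mix $\Pi^1_1$ and $\Sigma^1_1$. Once this rephrasing is in place, no further technical obstacle arises, and the proof is a mechanical transcription of the classical argument.
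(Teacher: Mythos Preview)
Your overall architecture matches the paper's proof exactly: same secret inner reduction $L$, same $K(e,j)=j$ for $j<k$, and the same idea of defining $K(e,k)$ via a $\Pi^1_1$ graph built out of the conditions $\psi_e(j_i)\downarrow$. There is, however, one genuine gap.

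Your displayed relation for $K(e,k)$ is \emph{not} single-valued. When $|\{j<k:\psi_e(j)\downarrow\}|=m$ the set $\{\ell<k:\psi_e(\ell)\uparrow\}$ has cardinality $k-m$, so unless $m=k-1$ there are several $\ell$'s satisfying your condition (take the unique $j_1<\cdots<j_m$ with $\psi_e(j_i)\downarrow$; then \emph{every} $\ell\notin\{j_1,\dots,j_m\}$ qualifies). Your parenthetical ``it picks out the unique $\ell<k$ with $\psi_e(\ell)\uparrow$'' is therefore false in general, and as written $K$ is only a $\Pi^1_1$ relation, not a partial $\Pi^1_1$ function as the definition of $\leq^1_{hLT}$ requires.

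The fix is exactly what the paper does: having produced a $\Pi^1_1$ relation $G_K(e,k,\ell)$ whose sections are nonempty precisely when $|\{j<k:\psi_e(j)\downarrow\}|=m$ and every element of the section lies in $\Pi^1_1\mbox{-}{\tt LLPO}_{m/k}(e)$, invoke $\Pi^1_1$-uniformization to extract a partial $\Pi^1_1$ function $K$ with $G_K(e,k,K(e,k))$ whenever the section is nonempty. (Adding a clause $\ell=\min(\{0,\dots,k-1\}\setminus\{j_1,\dots,j_m\})$ to your formula makes the relation single-valued on the domain of $\Pi^1_1\mbox{-}{\tt LLPO}_{m/k}$, but not globally, so you still need uniformization to get an honest partial $\Pi^1_1$ function.) Once this step is inserted, your argument is complete and coincides with the paper's.
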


\begin{proof}
We define a secret inner reduction $L$ as follows:
For any $e\in{\rm dom}(\Pi^1_1\mbox{-}{\tt LLPO}_{m/k})$,
\[
L(e)=
\begin{cases}
\{j<k\colon\psi_e(j)\downarrow\}&\mbox{ if }|\{j<k\colon\psi_e(j)\downarrow\}|=m\\
\{j<k\colon\psi_e(j)\downarrow\}\cup\{k\}&\mbox{ if }|\{j<k\colon\psi_e(j)\downarrow\}|<m
\end{cases}
\]

One can easily check that $(\ast\mid L(e))$ belongs to the domain of ${\tt Error}_{m/k+1}$.
For an outer reduction $K$, define $K(e,j)=j$ for any $j<k$.
To compute $K(e,k)$ along computable ordinal steps, wait for finding $m$ many $j<k$ such that $\psi_e(j)\downarrow$.
If it is found at some ordinal stage, then $K(e,k)$ is defined as the least $\ell<k$ such that $\ell\not=j$ for any such $j<k$.
Otherwise, the computation never terminates, i.e., $K(e,k)\uparrow$.
One can easily see that $K$ is $\Pi^1_1$.

For readers who are not familiar with ordinal computability, we describe the details.
Let $\mathcal{O}\subseteq\N$ be Kleene's system of ordinal notations; see e.g.~\cite{sacks2,ChYuBook}.
As Kleene's $\mathcal{O}$ is $\Pi^1_1$-complete, and the graph $G_e$ of $\psi_e$ is $\Pi^1_1$, there exists a many-one reduction $p$ witnessing $G_e\leq_m\mathcal{O}$, where $\leq_m$ denotes many-one reducibility.
Then, for $a\in\mathcal{O}$, one can consider the stage $a$ approximation $\psi_e[a]$ of $\psi_a$; that is, $\psi_e(n)[a]\downarrow=m$ if and only if $p(n,m)<_\mathcal{O}a$.
Note that $<_\mathcal{O}$ is c.e.~(see e.g.~Sacks \cite[Theorem I.3.5]{sacks2}); hence $\psi_e[a]\downarrow$ is also a c.e.~property.
Then define $G_K$ as follows:
\begin{align*}
(e,k,\ell)\in G_K\iff(\exists a\in\mathcal{O})\big[&(\exists^{\geq m}j<k)\;\psi_e(j)[a]\downarrow\\
&\land\;(\forall j<k)\;[\psi_e(j)[a]\downarrow\;\rightarrow\;j\not=\ell<k]\big]
\end{align*}

One can easily see that $G_K$ is $\Pi^1_1$ since $\mathcal{O}$ is $\Pi^1_1$.
Hence, by $\Pi^1_1$-uniformization (see Sacks \cite[Theorem II.2.3]{sacks2} or Moschovakis \cite[4B.4]{MosBook}), there exists a partial $\Pi^1_1$ function $K\pcolon\N^2\to\N$ such that $G_K(e,k,K(e,k))$ holds whenever $(e,k,\ell)\in G_K$ for some $\ell$.
As in the proof of Proposition \ref{prop:llpo-vs-error-pos}, one can see that $\langle L,K\rangle$ witnesses $\Pi^1_1\mbox{-}{\tt LLPO}_{m/k}\leq_{hLT}^1{\tt Error}_{m/k+1}$.
\end{proof}

Now we introduce the notion of hyperarithmetical reducibility for bilayer functions.
\Art's hyperarithmetic strategy is a code $\tau$ for a partial $\Pi^1_1$ function $h_\tau\colon\N^{<\N}\to\N$.

\begin{definition}
Let $f$ and $g$ be bilayer functions.
We say that {\em $f$ is hyperarithmetically LT-reducible to $g$} (written $f\leq_{hLT}g$) if there exists a hyperarithmetical winning \Art-\Ni strategy for $\mathfrak{G}(f,g)$.
\end{definition}

The following is an analogue of Proposition \ref{prop:denerror-hyp}:

\begin{prop}\label{prop:denerror-hyp2}
Let ${\tt P}$ be a partial multifunction whose codomain is $\ell\in\N$ with $\ell>0$.
For any $\ep<1/(\ell+1)$, if ${\tt P}\leq_{hLT}{\tt DenError}_\ep$, then ${\tt P}$ is hyperarithmetic.
\end{prop}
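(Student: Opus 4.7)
The plan is to follow the proof of Proposition \ref{prop:denerror-hyp} essentially verbatim, with ``computable'' replaced by ``$\Pi^1_1$'' throughout, and then verify that the complexity bookkeeping is not disturbed by this replacement. Let $(\tau\mid\eta)$ be a hyperarithmetic winning \Art-\Ni strategy witnessing ${\tt P}\leq_{hLT}{\tt DenError}_\ep$. The sole change from the previous argument is that $\tau$ is now a code for a partial $\Pi^1_1$ function on $\N^{<\N}$. Fixing \Mer's first move $n$, we obtain a partial $\Pi^1_1$ function $\Phi_\tau^n\pcolon\N^{<\N}\to\N$ describing termination of $\tau$. \Nim's strategy still carves out a well-founded subtree $T_\eta^n\subseteq\N^{<\N}$, and because \Ni must obey the rule for ${\tt DenError}_\ep$, for every non-leaf $\sigma\in T_\eta^n$ the set $A_\eta^n(\sigma):={\rm succ}_{T_\eta^n}(\sigma)$ has lower asymptotic density at least $1-\ep>1-1/(\ell+1)$. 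As in the previous proposition, this produces a bound function $b_n$ such that $|A_\eta^n(\sigma)\cap(\ell+1)\cdot b_n(\sigma)|>\ell\cdot b_n(\sigma)$, so intersecting $T_\eta^n$ with the tree of $((\ell+1)\cdot b_n)$-bounded strings yields a finite $(\ell\cdot b_n)$-fat tree on whose leaves $\Phi_\tau^n$ is defined.

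Next, define $\Psi^g(n)$ exactly as before: it searches for a finite $g$-fat tree $S\subseteq T^g$ on which $\Phi_\tau^n$ takes a single value $j<\ell$ at every leaf, and returns that $j$. By Lemma \ref{lem:fat-tree}, whenever $g\geq b_n$ such an $S$ exists. The crucial observation is that the witness for $\Psi^g(n)\downarrow=j$ is a finite object: the tree $S$ together with the values $g(\sigma)$ for the internal nodes $\sigma$ of $S$. Hence the relation ``$\Psi^g(n)\downarrow=j$'' depends on $g$ only at finitely many strings, and its non-arithmetic ingredient is the finite conjunction $\bigwedge_{\rho\in L_S}\Phi_\tau^n(\rho)=j$, which is $\Pi^1_1$.

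Now set $Q(n,j)\iff(\forall f\colon\N^{<\N}\to\N)(\exists g\geq f)[\Psi^g(n)\downarrow=j]$. Replacing $\exists g$ by an existential quantifier over the finite witness data, the matrix becomes the conjunction of an arithmetic condition (that the finite prefix majorizes $f$ on the relevant strings) and a $\Pi^1_1$ condition (that $\Phi_\tau^n$ is constantly $j$ on the leaves of $S$). Since $\Pi^1_1$ is closed under $\exists^\omega$, the matrix is $\Pi^1_1$ in $f$, and because $\Pi^1_1$ is also closed under type-$1$ universal quantification, $Q$ itself is $\Pi^1_1$. The two claims ``$Q(n,j)$ implies $j\in{\tt P}(n)$'' and ``for every $n$ there is $j<\ell$ with $Q(n,j)$'' then go through verbatim from the proof of Proposition \ref{prop:denerror-hyp}: the first uses the density-plus-fatness argument to force the $g$-fat tree $S$ to share a leaf with $T_\eta^n$, where the winning property of $(\tau\mid\eta)$ yields $j\in{\tt P}(n)$, and the second uses a majorization of $b_n$ with the $f_j$'s. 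Applying $\Delta^1_1$-selection (Moschovakis \cite[4B.5]{MosBook}) to the total $\Pi^1_1$ relation $Q$ then yields a hyperarithmetic function $p$ with $p(n)\in{\tt P}(n)$, so ${\tt P}$ is hyperarithmetic.

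The only real obstacle is the complexity bookkeeping in the third paragraph; one must verify carefully that, with $\Phi_\tau^n$ promoted from computable to $\Pi^1_1$, the predicate $Q$ still lives in $\Pi^1_1$ rather than rising to $\Pi^1_2$. This rests on the fact that the $\exists g$ quantifier can be contracted to a number quantifier thanks to the finite-witness property of $\Psi$, together with the closure of $\Pi^1_1$ under both $\exists^\omega$ and type-$1$ universal quantification; everything else in the argument is insensitive to the upgrade from computable to hyperarithmetic strategies.
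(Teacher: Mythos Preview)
Your proposal is correct and follows essentially the same approach as the paper: the paper's own proof says ``the argument is the same as Proposition \ref{prop:denerror-hyp}; only the complexity of $Q$ needs to be considered,'' and then verifies exactly what you verify, namely that the finite-witness property of $\Psi$ reduces the $\exists g$ to a number quantifier and that the $\Pi^1_1$ nature of $\Phi_\tau^n$ keeps $Q$ at the $\Pi^1_1$ level. Your explicit invocation of the closure of $\Pi^1_1$ under $\exists^\omega$ and type-$1$ universal quantification is a slightly more detailed justification of the same point, but the structure and content of the argument match the paper's.
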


\begin{proof}
The argument is the same as Proposition \ref{prop:denerror-hyp}.
Only the complexity of $Q$ needs to be considered.
If we consider a hyperarithmetical strategy, $\Phi_\tau^n$ is no longer a computable function, but a $\Pi^1_1$ function.
For this reason, $\Psi$ is also $\Pi^1_1$.
To see this, note that $\Psi^g(n)$ is defined to be $j$ if and only if there exists a $g$-fat finite tree $S\subseteq T^{g}$ such that $\Phi_\tau^n$ is defined and constant on the leaves of $S$ and its unique value is $j$.
This condition is $\Pi^1_1$ since $S$ is finite and $\Phi_\tau^n$ is $\Pi^1_1$.
Moreover, as mentioned in the proof of Proposition \ref{prop:denerror-hyp}, if $\Psi^g(n)$ is defined then the algorithm $\Psi$ only reads $g$ up to the height of $S$.
Thus, $Q(n,j)$ holds if and only if for any $f$, there exists a finite string $\sigma$ such that $\sigma$ majorizes $f$ up to $|\sigma|$ and $\Psi^\sigma(n)\downarrow=j$.
This condition is $\Pi^1_1$.
The rest follows the same argument as in Proposition \ref{prop:denerror-hyp}.
\end{proof}

\begin{cor}
For any $\ell\geq 2$, ${\tt Error}_{1/\ell}\leqLT{\tt DenError}_{\ep}$ if and only if $1/\ell\leq\ep$.
\end{cor}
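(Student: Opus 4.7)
My plan splits into the two directions of the biconditional, with the nontrivial content concentrated in showing that $\ep < 1/\ell$ forces ${\tt Error}_{1/\ell} \not\leqLT {\tt DenError}_\ep$.

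For the ``if'' direction, I would first observe that ${\tt DenError}_{\ep'} \leqoLT {\tt DenError}_\ep$ whenever $\ep' \le \ep$: any $A$ with $\underline{d}(A) \ge 1-\ep'$ also satisfies $\underline{d}(A) \ge 1-\ep$, so the identity maps serve as inner, outer, and secret-inner reductions. Combined with Observation~\ref{obs:err-vs-denerr}, which gives ${\tt Error}_{1/\ell} \leqoLT {\tt DenError}_{1/\ell}$, chaining yields ${\tt Error}_{1/\ell} \leqoLT {\tt DenError}_\ep$ whenever $1/\ell \le \ep$, and hence ${\tt Error}_{1/\ell} \leqLT {\tt DenError}_\ep$.

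For the ``only if'' direction, I would argue by contradiction: suppose $\ep < 1/\ell$ but ${\tt Error}_{1/\ell} \leqLT {\tt DenError}_\ep$. Since $\ell \ge 2$, Proposition~\ref{prop:pillpo-vs-error-pos} applied with $m=1$, $k=\ell-1$ gives $\Pi^1_1\mbox{-}{\tt LLPO}_{1/(\ell-1)} \leq_{hLT}^1 {\tt Error}_{1/\ell}$. Because every computable winning strategy is a fortiori $\Pi^1_1$, we have $\leqLT \subseteq \leq_{hLT}$; together with the trivial inclusion $\leq_{hLT}^1 \subseteq \leq_{hLT}$ and transitivity of $\leq_{hLT}$ (the $\Pi^1_1$ analogue of Proposition~\ref{prop:Turing-is-preorder}, whose proof goes through verbatim by replacing ``computable'' with ``$\Pi^1_1$''), I obtain $\Pi^1_1\mbox{-}{\tt LLPO}_{1/(\ell-1)} \leq_{hLT} {\tt DenError}_\ep$. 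The codomain of $\Pi^1_1\mbox{-}{\tt LLPO}_{1/(\ell-1)}$ is $\ell-1 \ge 1$ and $\ep < 1/\ell = 1/((\ell-1)+1)$, so Proposition~\ref{prop:denerror-hyp2} forces $\Pi^1_1\mbox{-}{\tt LLPO}_{1/(\ell-1)}$ to be hyperarithmetic.

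The main obstacle is then to show that $\Pi^1_1\mbox{-}{\tt LLPO}_{1/k}$ fails to be hyperarithmetic for every $k \ge 1$, yielding the required contradiction. I would transport the Kleene-recursion trick of Example~\ref{exa:LLPO} and Proposition~\ref{prop:llpo-error} into the $\Pi^1_1$ realm. Suppose toward contradiction that a partial $\Pi^1_1$ function $f$ refines $\Pi^1_1\mbox{-}{\tt LLPO}_{1/k}$. The relation $R$ defined by $R(e,j) \iff j < k \wedge f(e)\downarrow = j$ is $\Pi^1_1$, so by the $s$-$m$-$n$ theorem together with Kleene's recursion theorem for the enumeration $\{\psi_e\}$ of partial $\Pi^1_1$ functions (see, e.g., Sacks~\cite{sacks2}), there is an index $e$ with $\psi_e(j)\downarrow \iff R(e,j)$. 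A case split then closes the argument: if $f(e)\downarrow = j_0 < k$ then $e \in \dom(\Pi^1_1\mbox{-}{\tt LLPO}_{1/k})$ with $\Pi^1_1\mbox{-}{\tt LLPO}_{1/k}(e) = \{j<k : j\ne j_0\}$, which does not contain $f(e) = j_0$; otherwise no $\psi_e(j)$ converges for $j<k$, so again $e \in \dom$ with $\Pi^1_1\mbox{-}{\tt LLPO}_{1/k}(e) = \{0,\dots,k-1\}$, yet $f(e)$ is either undefined or outside this set. Either branch contradicts the refinement property of $f$, completing the proof of the corollary.
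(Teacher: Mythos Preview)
Your proof is correct and follows essentially the same route as the paper: the backward direction via Observation~\ref{obs:err-vs-denerr}, and the forward direction by combining Propositions~\ref{prop:pillpo-vs-error-pos} and~\ref{prop:denerror-hyp2} with the non-hyperarithmeticity of $\Pi^1_1\mbox{-}{\tt LLPO}_{1/(\ell-1)}$. The paper asserts this last fact as ``clear'' and leaves the inclusion $\leqLT\,\subseteq\,\leq_{hLT}$ and the transitivity of $\leq_{hLT}$ implicit, whereas you spell out all three points, including a full recursion-theorem argument for the non-hyperarithmeticity.
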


\begin{proof}
The backward direction follows from Observation \ref{obs:err-vs-denerr}.
For the forward direction, assume that $1/\ell>\ep$.
It is clear that $\Pi^1_1\mbox{-}{\tt LLPO}_{1/(\ell-1)}$ is not hyperarithmetic.
In particular, by Proposition \ref{prop:denerror-hyp2}, we have $\Pi^1_1\mbox{-}{\tt LLPO}_{1/(\ell-1)}\not\leq_{hLT}{\tt DenError}_{\ep}$ since $\ep<1/\ell$.
However, by Proposition \ref{prop:pillpo-vs-error-pos}, we have $\Pi^1_1\mbox{-}{\tt LLPO}_{1/(\ell-1)}\leq_{hLT}{\tt Error}_{1/\ell}$. 
Hence, ${\tt Error}_{1/\ell}\not\leq_{hLT}{\tt DenError}_{\ep}$.
\end{proof}

Note that the above proof also shows that $\Pi^1_1\mbox{-}{\tt LLPO}_{1/\ell}\leq_{hLT}{\tt DenError}_{\ep}$ if and only if $1/(\ell+1)\leq\ep$.

The above results say nothing about ${\tt DenError}_0$.
Note that since the asymptotic density of a cofinite set is $1$, we have ${\tt Cofinite}\leqoLT{\tt DenError}_0$.
We show that computability with density error $0$ is strictly stronger than computability with finitely many error in the following sense:

\begin{theorem}
${\tt Cofinite}\lLT{\tt DenError}_0$.
\end{theorem}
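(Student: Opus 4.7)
The forward inclusion ${\tt Cofinite}\leqoLT{\tt DenError}_0$ is immediate: any cofinite set $[n,\infty)$ has lower asymptotic density $1$, so the secret inner reduction $n\mapsto[n,\infty)$ together with identity inner and outer reductions exhibits a one-query reduction. For the strict inequality, suppose for contradiction that $(\tau\mid\eta)$ is a winning \Art-\Nim strategy witnessing ${\tt DenError}_0\leqLT{\tt Cofinite}$. Since ${\tt Cofinite}$'s public input is trivial, the computable strategy $\tau$ determines a well-founded subtree $T_\tau\subseteq\N^{<\N}$ of \Mer's possible response sequences (well-founded because \Art must terminate on every legal play), with infinite branching at every internal node, together with a partial computable function $V_\tau\colon T_\tau^{\rm leaf}\to\N$ recording \Art's final declaration. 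The winning condition rephrases as: for every density-$1$ set $A$, \Nim supplies a cut-off function $n^A\colon T_\tau^{\rm int}\to\N$ such that every leaf $\rho$ of the pruned tree $T_{n^A}$ (children $<n^A(\sigma)$ removed at each internal $\sigma$) satisfies $V_\tau(\rho)\in A$; equivalently, for every density-$0$ set $B\subseteq\N$, the leaf-set $V_\tau^{-1}(B)$ is \emph{coverable} by some cut-off tree.

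We aim to produce a density-$0$ set $B^*$ with $V_\tau^{-1}(B^*)$ uncoverable, and then contradict the above by setting $A^*=\N\setminus B^*$. First observe that $V_\tau$ must have infinite range: if bounded by $M$, then $A=\{m\in\N:m>M\}$ is density $1$ yet disjoint from every possible \Art output, immediately refuting winning. For each internal $\sigma\in T_\tau$, set $R_\sigma=V_\tau(\{\rho\text{ a leaf}:\rho\succeq\sigma\})$, and call $\sigma$ \emph{free} if $R_\sigma$ is infinite and \emph{bounded} otherwise. Define $L\subseteq T_\tau$ inductively by: a leaf $\rho$ is in $L$ iff $V_\tau(\rho)\in B^*$, and an internal $\sigma$ is in $L$ iff $\{k:\sigma\fr k\in L\}$ is unbounded; a standard argument on the well-founded rank of $T_\tau$ shows that $V_\tau^{-1}(B^*)$ is uncoverable iff $\epsilon\in L$, so we construct $B^*$ to guarantee $\epsilon\in L$. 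When $\sigma$ is free, well-foundedness of $T_\tau$ provides a descendant $\sigma^*$ whose every child $\sigma^*\fr k$ is bounded yet $R_{\sigma^*}$ is infinite; hence the $R_{\sigma^*\fr k}$'s are not uniformly bounded, so we extract a sparse sequence $k_0<k_1<\cdots$ together with values $v_j\in R_{\sigma^*\fr k_j}$ satisfying $v_{j+1}>2v_j$, and add $\{v_j\}$ to $B^*$. When $\sigma$ is bounded, a pigeonhole argument over the finite set $R_\sigma$ supplies a single value appearing in $R_{\sigma\fr k}$ for infinitely many $k$, added once to $B^*$.

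Setting $A^*=\N\setminus B^*$ (density $1$), the uncoverability of $V_\tau^{-1}(B^*)$ implies that for any \Nim strategy $\eta_{A^*}$ and resulting cut-off $n^{A^*}$, the pruned tree $T_{n^{A^*}}$ contains a leaf $\rho$ with $V_\tau(\rho)\in B^*$, so \Art terminates outside $A^*$ along the corresponding legal \Mer play---contradicting that $(\tau\mid\eta)$ is winning. The main technical obstacle I anticipate is global density bookkeeping for the bounded-case contributions: many internal nodes contributing small values to $B^*$ could in principle drive up its density. I expect to resolve this by exploiting the winning hypothesis itself---pathological accumulations of small contributions would reflect $V_\tau$'s outputs being confined to a common initial segment $[0,M]$ on the essential portion of $T_\tau$, and then the density-$1$ set $(M,\infty)$ would refute winning at the relevant node, ruling out such accumulations. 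Together with the easy direction, this establishes ${\tt Cofinite}\lLT{\tt DenError}_0$.
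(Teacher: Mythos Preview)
Your overall framework matches the paper's: assume a winning strategy $(\tau\mid\eta)$ for ${\tt DenError}_0\leqLT{\tt Cofinite}$, pass to a well-founded tree with Arthur's outputs at the leaves, and construct a density-$0$ set $B^*$ whose preimage cannot be covered by any cut-off. But several steps in your construction do not go through.

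A minor setup issue first: $\tau$ alone does not determine a well-founded tree, since termination on every play is only guaranteed relative to Nimue's thresholds for some fixed density-$1$ input. The paper handles this by fixing a function $h$ with $\Phi_\tau$ covering $\N^{<\N}[\geq h]$, and you need to do the same before your labeling makes sense.

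The substantive gap is your bounded case. You pick $v$ with $v\in R_{\sigma\fr k}$ for infinitely many $k$ and add it to $B^*$, but this does not yield $\sigma\in L$: membership in $L$ requires infinitely many children in $L$, hence recursively infinitely many grandchildren in $L$, all the way down to leaves with value in $B^*$. Your criterion only guarantees that \emph{some} leaf below each $\sigma\fr k$ has value $v$. Concretely, let each $\sigma\fr k\fr 0$ be a leaf of value $0$ and each $\sigma\fr k\fr j$ ($j\geq 1$) a leaf of value $1$; then $0\in R_{\sigma\fr k}$ for every $k$, yet with $B^*=\{0\}$ each $\sigma\fr k$ has only one child in $L$, so $\sigma\notin L$. (The correct choice here is $v=1$, but your pigeonhole on the ranges $R_{\sigma\fr k}$ cannot distinguish it from $v=0$; the right pigeonhole is on the recursively defined sets $\{c:\sigma\fr k\in L\text{ when }B^*=\{c\}\}$, which is essentially the paper's labeling.) The same defect infects your free case, where you need $\sigma^*\fr k_j\in L$ rather than merely $v_j\in R_{\sigma^*\fr k_j}$. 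And even granting $\sigma^*\in L$, you never explain how this propagates back to the root: that would require putting infinitely many siblings of each ancestor of $\sigma^*$ into $L$ as well. The density bookkeeping you flag is therefore not a side issue but entangled with these structural problems, and your proposed resolution does not follow from the winning hypothesis at internal nodes.

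The paper sidesteps all of this with a sharper labeling: each node gets label $c\in\N$ if infinitely many children carry label $c$, and label $\infty$ otherwise. A node labeled $c$ is then automatically in your $L$ for $B^*=\{c\}$, so if the root carries a finite label one is done with a singleton $B^*$. If the root is labeled $\infty$, the paper isolates the \emph{decisive} nodes---those all of whose proper initial segments are labeled $\infty$ while the node and all its big siblings carry finite labels---enumerates them globally as $(\alpha_s)_s$, and at each $\alpha_s$ (whose parent is labeled $\infty$, so the finite labels among its siblings are unbounded) selects a big sibling labeled $c_s>2c_{s-1}$. Then $B^*=\{c_s:s\in\N\}$ has density $0$, and Merlin wins by following $\infty$-labels until forced into a decisive position, then switching to and following label $c_s$ down to a leaf. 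The correct recursive pigeonhole, the local-to-global propagation, and the density control are all packaged into this single labeling.
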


\begin{proof}
Suppose not.
Then, there exists a winning \Art-\Ni strategy $(\tau\mid\eta)$ witnessing ${\tt DenError}_0\leqLT{\tt Cofinite}$.
Except for the first move $(\ast\mid A)$, \Mer's move is always a number $j\in\N$, which yields the tree $\N^{<\N}$ of all possible moves by \Mer.
Here $A$ is a secret input, which is invisible to \Art.
Hence, \Art's strategy $\tau$ yields a partial computable function $\Phi_\tau\pcolon\N^{<\N }\to\N $, where $\Phi_\tau(\sigma)\downarrow=u$ if and only if, after reading \Mer's moves $\sigma$, \Art's strategy $\tau$ declares termination with $u$.
For $h\colon\N^{<\N}\to\N$, consider the tree $\N^{<\N}[\geq h]=\{\sigma\in\N^{<\N}:(\forall n<|\sigma|)\;h(\sigma\upto n)\leq\sigma(n)\}$.
\Nim's strategy $\eta$ restricts \Mer's possible moves to the tree $\N^{<\N}[\geq \eta_A]$, where $\eta_A(\sigma)=\eta(A\fr\sigma)$, and moreover, as $\tau$ is winning, the computation $\Phi_\tau$ determines covers the tree $\N^{<\N}[\geq \eta_A]$; that is, for any infinite path $x$ through $\N^{<\N}[\geq \eta_A]$ there exists an initial segment $\xi$ of $x$ such that $\Phi_\tau(\xi)$ is defined.

This ensures the existence of a function $h\colon\N^{<\N}\to\N$ such that the computation $\Phi_\tau$ covers the tree $\N^{<\N}[\geq h]$.
Consider the set $B$ of all minimal strings $\xi\in\N^{<\N}[\geq h]$ such that $\Phi_\tau(\xi)$ is defined.
Note that any infinite path $x$ through $\N^{<\N}[\geq h]$ has an initial segment in $B$.
Then, $B$ yields a well-founded subtree $T=\{\zeta\in\N^{<\N}:(\exists\xi\in B)\;\zeta\preceq\xi\}$ of $\N^{<\N}[\geq h]$ so that $B$ is the set of all leaves of $T$.
One can see that if $\sigma\in T$ is not a leaf then the set ${\rm succ}_T(\sigma)$ of its immediate successors is cofinite.
This is because, for any $n\geq h(\sigma)$, any infinite path extends $\sigma\fr n\in \N^{<\N}[\geq h]$ has an initial segment $\rho\in B$.
Then $\rho$ is a leaf of $T$, and since $\sigma \in T$ is not a leaf, $\rho$ must extend $\sigma\fr n$.
Hence we have $\sigma\fr n\in T$ since a tree is $\preceq$-downward closed.

We label each node of this well-founded tree as follows:
First, a leaf $\rho\in T$ is labeled by the value of $\Phi_\tau(\rho)$.
If $\sigma\in T$ is not a leaf, then turn to its immediate successors.
If $\sigma$ has infinitely many immediate successors which have the same label $c$, then $\sigma$ is also labeled by $c$.
If there is no such label $c$, then $\sigma$ is labeled by $\infty$.

Now, suppose that the label of the root of $T$ is $c\not=\infty$.
Then, \Me plays $\N\setminus\{c\}$ as his first move, which has clearly asymptotic density $1$.
In the following, we assume that \Ar and \Ni follows their winning strategies $\tau$ and $\eta$, respectively.
If \Ni reacts to the above move with $z_0$, search for $x_1\geq z_0$ such that $\langle x_1\rangle\in T$ and the label of $\langle x_1\rangle$ is $c$.
Such an $x_1$ exists, since for the former condition $\langle x_1\rangle\in T$, recall that the set of immediate successors of a node in $T$ is cofinite, and for the latter condition, the label of the root is $c$, so there are infinitely many immediate successors labeled by $c$.
Then \Me plays $x_1$ as his next move.
Continuing this argument, \Me can keep returning nodes of $T$ with the same label $c$, and \Mer's moves eventually reach a leaf of $T$.
Reaching a leaf means that \Ar declares termination of the game with some value $\Phi_\tau(\rho)$, but since the label of this leaf $\rho$ is $c$, the value $\Phi_\tau(\rho)$ must be $c$.
Since $c\not\in{\tt DenError}_0(\N\setminus\{c\})$ and \Mer's first move is $\N\setminus\{c\}$, this means that \Me wins the game, which contradicts our assumption that $(\tau\mid\eta)$ is a winning \Art-\Ni strategy.

Thus, the root of $T$ must be labeled by $\infty$.
We say that a node $\sigma'\in T$ is a big sibling of a node $\sigma\in T$ if $\sigma$ and $\sigma'$ take the same value except for the last entry, and $\sigma'$ is larger than $\sigma$ for the last entry; that is, $\sigma(n)=\sigma'(n)$ for any $n<|\sigma|-1$ and $\sigma(|\sigma|-1)<\sigma'(|\sigma|-1)$. 
We also say that a node $\sigma\in T$ is decisive if all proper initial segments of $\sigma$ are labeled by $\infty$, but $\sigma$ and all big siblings of $\sigma$ are labeled by some values in $\N$.
Note that the root of $T$ is not decisive as it is labeled by $\infty$, so any decisive node has a proper initial segment.
Let $(\alpha_s)_{s\in\N}$ be a list of all decisive nodes of $T$.
First put $d_0=1$.
At stage $s$, assume that $d_s$ has already been defined.
The immediate predecessor $\alpha_s^-$ of $\alpha_s$ is labeled by $\infty$ since $\alpha_s$ is decisive.
The label $\infty$ of $\alpha_s^-$ means that, for any $c\in\N$, there are only finitely many siblings of $\alpha_s$ labeled by $c$.
Therefore, by the pigeonhole principle, $\alpha_s$ has a big sibling labeled by some $c_s>d_s$.
As $\alpha_s$ is decisive, $c_s$ must be a finite value.
Then, put $d_{s+1}=2c_s$.

Now \Me plays $\N\setminus\{c_s:s\in\N\}$ as his first move, which has asymptotic density $1$ since $c_{s+1}>2c_s$ for any $s$ by our construction.
At the round $n+1$, assume that the history of \Mer's previous moves is $x_1,\dots,x_n$, and \Nim's previous move is $z_n$.
First consider the case that the label of $\sigma_n=\langle x_1,\dots,x_n\rangle$ is $\infty$.
If $\sigma_n$ has infinitely many immediate successors labeled by $\infty$, then as his next move \Me plays $x_{n+1}\geq z_n$ so that $\sigma_n\fr x_{n+1}\in T$ is labeled by $\infty$.
Otherwise, there are only finitely many immediate successors of $\sigma_n$ labeled by $\infty$, and thus, there exists a decisive immediate successor of $\sigma_n$ of the form $\sigma_n{}\fr z$ for some $z\geq z_n$.
Then we must have $\alpha_s=\sigma_n{}\fr z$ for some $s$.
As seen above, $\alpha_s$ has a big sibling $\alpha_s'=\sigma_n{}\fr z'$ labeled by $c_s$.
As his next move, \Me plays the last entry $x_{n+1}:=z'$ of $\alpha_s'$.
Note that the history $\alpha_s'=\langle x_1,\dots,x_n,x_{n+1}\rangle$ of moves is now labeled by $c_s$.
Next consider the case that the label of $\sigma_n=\langle x_1,\dots,x_n\rangle$ has already become a finite value $c\in\N$.
In this case, $\sigma_n$ has infinitely many immediate successors labeled by $c$, and then as his next move \Me can play $x_{n+1}\geq z_n$ so that $\sigma_n\fr x_{n+1}\in T$ is labeled by $c$.

As this play follows a winning \Art-\Ni strategy, \Ar declares termination at some round.
Then, the history of \Mer's moves eventually reaches a leaf of $T$ which is labeled by a finite value.
Hence, the history of \Mer's moves is labeled by a finite value at some round, and once the label becomes a finite value, our construction of \Mer's strategy ensures that the value of the label does not change after that.
Indeed, \Mer's strategy described above stabilizes the labels of the histories of \Mer's moves to $c_s$ for some $s$.
Therefore, the history of \Mer's moves eventually reaches a leaf of $T$ which is labeled by $c_s$, which turns out to be (the second coordinate of) \Art's last move since the leaf of $T$ is labeled by the value of $\Phi_\tau$ on it.
However, $c_s\not\in{\tt DenError}_0(\N\setminus\{c_s:s\in\N\})$ and \Mer's first move is $\N\setminus\{c_s:s\in\N\}$.
This means that \Me wins the game, which contradicts our assumption that $(\tau\mid\eta)$ is a winning \Art-\Ni strategy.
Consequently, there exists no winning \Art-\Ni strategy; hence ${\tt Cofinite}\lLT{\tt DenError}_0$.
\end{proof}

Hence, we get the strict hierarchy of computability with error density $\ep$:
\[{\tt Cofinite}\lLT{\tt DenError}_0\lLT\dots\lLT{\tt DenError}_{1/3}\lLT{\tt DenError}_{1/2}\eqLT\neg\neg.\]

\begin{figure}[t]
{\footnotesize
\includegraphics[width=9.5cm]{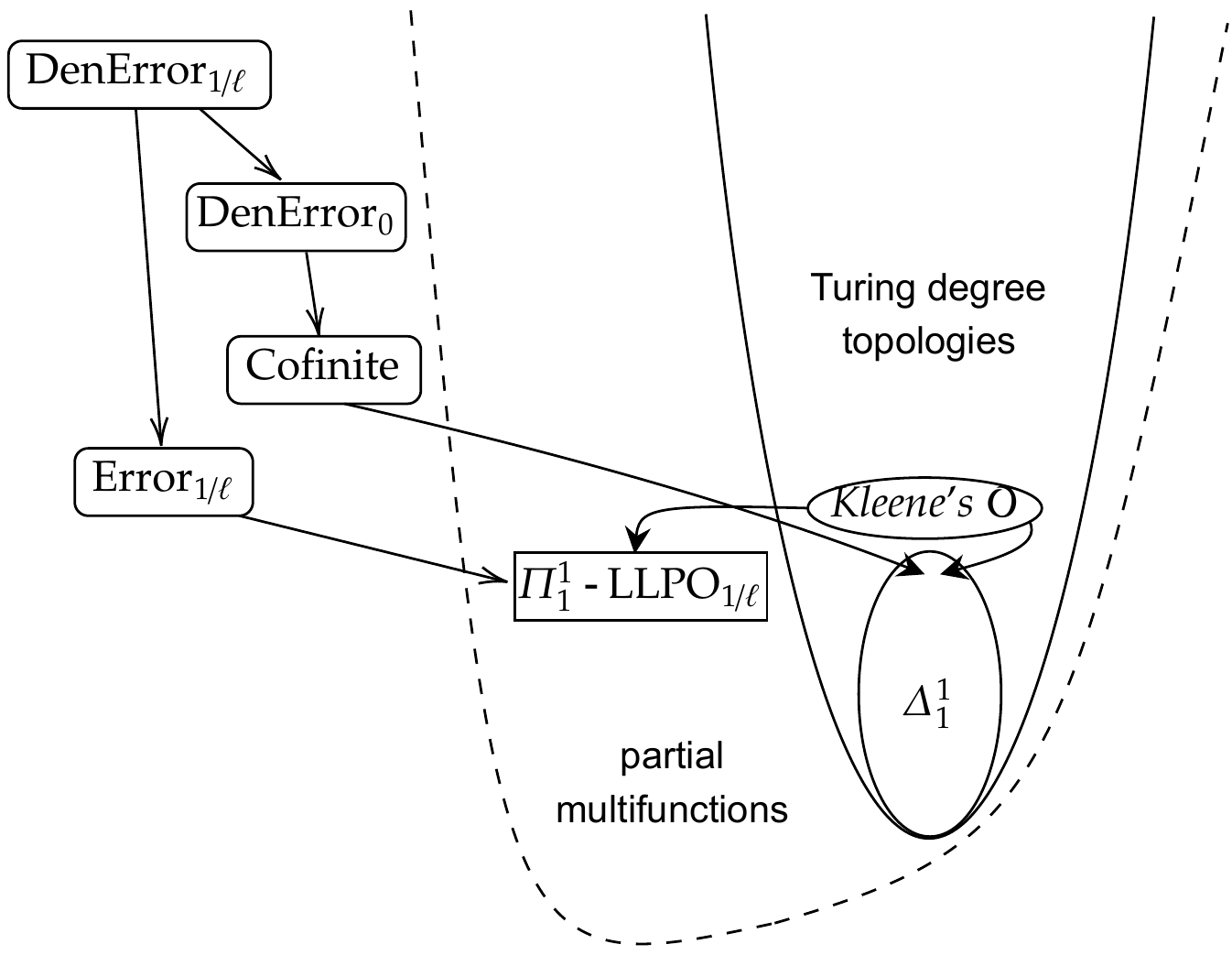}
\caption{\footnotesize Higher parts on Lawvere-Tierney topologies on the effective topos}\label{figure:topology2}
}
\end{figure}

Figure \ref{figure:topology2} summarizes some of basic implications about the $\leq_{\rm rea}$-ordering on the Lawvere-Tierney topologies (around hyperarithmetic Turing topologies) on the effective topos, where $A\to B$ means $B\leqLT A$ (or $B^{\Game\to}\leq_{\rm rea}A^{\Game\to}$).

\section{Future work}

%
%
%
%

%

One may come up with other basic bilayer functions not mentioned in this article, but we do not know which ones are non-trivial and interesting.
It is a vague question, but finding interesting basic bilayer functions is a big problem in itself.

\begin{question}
Is there any other interesting basic bilayer function?
\end{question}

In Section \ref{sec:prob-comp-er}, we have seen that the $\N$-version of weak weak K\"onig's lemma, ${\tt WWKL}$, is Turing equivalent to ${\tt LLPO}$.
Due to this kind of phenomenon, unlike $\N^\N$-computation, it is difficult to find a nontrivial partial multifunction in the context of $\N$-computation.
There are partial multifunctions on $\N$ not mentioned so far, such as {\em all-or-unique choice} ${\tt AoUC}_X$ on $X$ (see e.g.~\cite{KiPa16}).
However, the $\N$-version of ${\tt AoUC}_{2^\N}$ turns out to be Turing equivalent to ${\tt LLPO}$ by the same argument as above, and ${\tt AoUC}_\N$ is Turing equivalent to the halting problem by considering enumeration time functions as in Section \ref{sec:perfect-information-game}.

\begin{question}
Is there any other natural partial multifunction on $\N$ whose Turing degree strictly lie between the computable ones and the halting problem?
\end{question}

In this article, we have focused on topologies on the effective topos; however ``{\it the world of computable mathematics}'' for modern computability theorists seems to be the Kleene-Vesley topos rather than the effective topos.
As indicated in Section \ref{sec:imperfect-information-game}, the structure of Lawvere-Tierney topologies on the Kleene-Vesley topos seems isomorphic to the bilayer version of generalized Weihrauch reducibility.
This structure should also be explored in depth in the future.

As another topos, the realizability topos ${\bf RT}(K_2)$ induced by Kleene's second algebra corresponds to ``{\it the world of continuous mathematics},'' and a Lawvere-Tierney topology on the topos is a kind of data that indicates how much discontinuity to add to the world.
One can see that the structure of Lawvere-Tierney topologies on the topos ${\bf RT}(K_2)$ is isomorphic to the bilayer version of generalized continuous Weihrauch reducibility.

In general, there are many other toposes that are related to computability theory and (effective) descriptive set theory.
As mentioned in Kihara \cite{Kih20}, any $\Sigma^\ast$-pointclass (see \cite{MosBook}) yields a (relative) partial combinatory algebra, which induces a topos.
If the pointclass $\tpbf{\Pi}^1_1$ is used as a seed, a topos corresponding to ``the world of Borel measurable mathematics'' will be created, and if the pair $(\Pi^1_1,\tpbf{\Pi}^1_1)$ is used, a topos corresponding to ``the world of effective Borel measurable mathematics'' will be created.
These lead us to the study of ``Lawvere-Tierney topologies for (effective) descriptive set theorists.''
The topologies in this case are some sort of data that indicate how much non-Borel objects to add to the world.

It may also be reasonable to study these structures in the context of synthetic descriptive set theory \cite{PaBr15}.
We leave the exploration of these structures as a future task.

\begin{ack}
The author would like to thank Satoshi Nakata for valuable discussions.
Kihara's research was partially supported by JSPS KAKENHI Grant 19K03602 and 21H03392, and the JSPS-RFBR Bilateral Joint Research Project JPJSBP120204809.
\end{ack}

\bibliographystyle{plain}
\bibliography{references}

\end{document}